\numberwithin{equation}{section}
\DeclareSymbolFont{cyrletters}{OT2}{wncyr}{m}{n}
\DeclareMathSymbol{\Sha}{\mathalpha}{cyrletters}{"58}
\theoremstyle{plain}
\newtheorem{prop}{Proposition}
\newtheorem{theo}[prop]{Theorem}
\newtheorem{coro}[prop]{Corollary}
\newtheorem{lemm}[prop]{Lemma}
\theoremstyle{definition}
\newtheorem{defi}[prop]{Definition}
\newtheorem{ques}[prop]{Question}
\newtheorem{rema}[prop]{Remark}
\newtheorem{exam}[prop]{Example}
\newcommand{\bC}{\mathbb C}
\newcommand{\bG}{\mathbb G}
\newcommand{\bN}{\mathbb N}
\newcommand{\bP}{\mathbb P}
\newcommand{\bQ}{\mathbb Q}
\newcommand{\bR}{\mathbb R}
\newcommand{\bZ}{\mathbb Z}
\newcommand{\cC}{\mathcal C}
\newcommand{\cD}{\mathcal D}
\newcommand{\cE}{\mathcal E}
\newcommand{\cF}{\mathcal F}
\newcommand{\cM}{\mathcal M}
\newcommand{\cO}{\mathcal O}
\newcommand{\cP}{\mathcal P}
\newcommand{\cS}{\mathcal S}
\newcommand{\cX}{\mathcal X}
\newcommand{\cY}{\mathcal Y}
\newcommand{\cZ}{\mathcal Z}
\newcommand{\wcX}{\widetilde{ \mathcal X}}
\newcommand{\Br}{\operatorname{Br}}
\newcommand{\ch}{\operatorname{ch}}
\newcommand{\Def}{\operatorname{Def}}
\newcommand{\Gal}{\operatorname{Gal}}
\newcommand{\Gr}{\operatorname{Gr}}
\newcommand{\Isom}{\operatorname{Isom}}
\newcommand{\Pic}{\operatorname{Pic}}
\newcommand{\CH}{\operatorname{CH}}
\newcommand{\SL}{\operatorname{SL}}
\newcommand{\Spec}{\operatorname{Spec}}
\newcommand{\Aut}{\operatorname{Aut}}
\newcommand{\Ext}{\operatorname{Ext}}
\newcommand{\Td}{\operatorname{Td}}
\newcommand{\indd}{\operatorname{inddec}}
\newcommand{\indx}{\operatorname{ind}}
\newcommand{\ra}{\rightarrow}
\newcommand{\bfr}{\mathbf r}
\author{Brendan Hassett}
\address{Department of Mathematics\\
Rice University, MS 136 \\
Houston, TX  77251-1892 \\
USA}
\curraddr{Department of Mathematics\\
Brown University \\
Box 1917 \\
151 Thayer Street
Providence, RI 02912 \\
USA}
\email{bhassett@math.brown.edu}
\author{Yuri Tschinkel}
\address{Courant Institute\\
                New York University \\
                New York, NY 10012 \\
                USA }
\email{tschinkel@cims.nyu.edu}
\address{Simons Foundation\\
160 Fifth Avenue\\
New York, NY 10010\\
USA}
\title{Rational points on K3 surfaces and derived equivalence}
\begin{document}
\date{\today}

\maketitle

The geometry of vector bundles and derived categories on complex
K3 surfaces has developed rapidly since Mukai's seminal work \cite{Mukai}.
Many foundational questions have been answered:
\begin{itemize}
\item{the existence of vector bundles and twisted sheaves with prescribed
invariants;}
\item{geometric interpretations of isogenies between K3 surfaces \cite{Orlov,CalThe};}
\item{the global Torelli theorem for holomorphic symplectic manifolds \cite{Verb,HuySur};}
\item{the analysis of stability conditions and its implications for birational geometry
of moduli spaces of vector bundles and more general objects in the derived category \cite{BM1,BM2,Br1}.}
\end{itemize}
Given the precision and power of these results, it is natural to seek arithmetic applications of this
circle of ideas.  Questions about zero cycles on K3 surfaces have attracted the attention of 
Beauville-Voisin \cite{BV}, Huybrechts \cite{HuyMSRI}, and other authors.

Our focus in this note is on {\em rational points} over non-closed fields of arithmetic
interest. 
We seek to relate the notion of derived equivalence to arithmetic problems
over various fields.  Our guiding questions are:
\begin{ques} \label{ques:one}
Let $X$ and $Y$ be K3 surfaces, derived equivalent over a field $F$.  Does the
existence/density of rational points of $X$ imply the same for $Y$?
\end{ques}
Given $\alpha \in \Br(X)$, let $(X,\alpha)$ denote the twisted K3 surface
associated with $\alpha$, i.e., if $\cP \ra X$ is an \'etale projective bundle representing
$\alpha$, of relative dimension $r-1$ then 
$(X,\alpha)=[\cP/\SL_r].$
\begin{ques} \label{ques:two}
Suppose that $(X,\alpha)$ and $(Y,\beta)$ are derived equivalent
over $F$.  Does the existence of a rational point on the former imply the same 
for the latter?
\end{ques}
Note that an $F$-rational point of $(X,\alpha)$ corresponds to an $x\in X(F)$ such 
that $\alpha|x=0\in \Br(F)$. After this paper was released, 
Ascher, Dasaratha, Perry, and Zhou \cite{ADPZ} found that
Question~\ref{ques:two} has a negative answer, even over local fields.

We shall consider these questions for $F$ finite, $p$-adic, real, and local with algebraically closed
residue field.  These will serve as a foundation for studying how the geometry of K3 surfaces
interacts with Diophantine questions over local and global fields.  For instance, is
the Hasse/Brauer-Manin formalism over global fields compatible with (twisted) derived equivalence?
See \cite{HVA1,HVA2,MSTVA} for concrete applications to rational points problems.

In this paper, we first review general 
properties of derived equivalence over arbitrary base fields.  We then offer
examples which illuminate some of the challenges in applying derived category techniques.   
The case of finite and real fields is presented first---here the picture is well developed.
Local fields of equicharacteristic zero are also fairly well understood, at least for
K3 surfaces with semistable or other mild reduction.  The analogous questions in mixed
characteristic 
remain largely open, but comparison with the geometric case suggests a number of
avenues for future investigation.

\

{\bf Acknowledgments}:  We are grateful to Jean-Louis
Colliot-Th\'el\`ene, Daniel Huybrechts, Christian Liedtke, Johannes Nicaise,
Sho Tanimoto,
Anthony V\'arilly-Alvarado, Olivier Wittenberg,
and Letao Zhang for helpful conversations.  
We thank Vivek Shende for pointing out the application of A'Campo's Theorem.
We have benefited enormously from the thoughtful feedback of the referees.
The first author was supported by NSF
grants 0901645, 0968349, and 1148609; the second
author was supported by NSF grants
0968318 and 1160859.  We are grateful to the American Institute of 
Mathematics for sponsoring workshops where these ideas were explored.

\section{Generalities on derived equivalence for K3 surfaces}
\subsection{Definitions}
Let $X$ and $Y$ denote
K3 surfaces over a field $F$.
Let $p$ and $q$ be the projections from $X\times Y$ to $X$ and $Y$ respectively.

Let $\cE \in D^b(X\times Y)$ be an object of the bounded derived category of coherent sheaves,
which may be represented by a perfect complex of locally free sheaves.
The {\em Fourier-Mukai transform} is defined
$$\begin{array}{rcl}
\Phi_{\cE}:D^b(X) & \ra & D^b(Y) \\
           \cF & \mapsto & q_* (\cE \otimes p^*\cF),
\end{array}$$
where push forward and tensor product are the derived operations.

We say $X$ and $Y$ are {\em derived equivalent} if there exists an equivalence of
triangulated categories over $F$
$$\Phi:D^b(X) \stackrel{\sim}{\ra} D^b(Y).$$
A fundamental theorem of Orlov \cite{Orlov} implies that $\Phi$ arises 
as the Fourier-Mukai transform $\Phi_{\cE}$ for some perfect complex
$\cE \in D^b(X\times Y)$.
When we refer to Fourier-Mukai transforms below they will 
induce derived equivalences.

\subsection{Mukai lattices}

Suppose $X$ is a K3 surface defined over $\bC$ and consider its Mukai lattice
$$
\tilde{H}(X,\bZ)=\tilde{H}(X,\bZ):=H^0(X,\bZ)(-1) \oplus H^2(X,\bZ) \oplus H^4(X,\bZ)(1),
$$
where we apply Tate twists to get a Hodge structure
of weight two.  Mukai vectors refer to type $(1,1)$ vectors in $\tilde{H}(X,\bZ)$. 
Let $\left(,\right)$ denote the natural nondegenerate pairing
on $\tilde{H}(X,\bZ)$; this coincides with the intersection pairing on $H^2(X,\bZ)$
and the negative of the intersection pairing on the other summands.    
There is an induced homomorphism on the level of cohomology:
$$\begin{array}{rcl}
\phi_{\cE}:\tilde{H}(X,\bZ) & \ra & \tilde{H}(Y,\bZ) \\
   \eta & \mapsto & q_*(\sqrt{\Td_{X\times Y}}\cdot \ch(\cE) \cup p^*\eta ).
\end{array}
$$
Note that
$$\phi_{\cE}\ch(\cF)=\ch(\Phi_{\cE}(\cF)).$$

For $X$ defined over a non-closed field $F$,
there are analogous constructions in $\ell$-adic and other flavors of cohomology \cite[\S~2]{LO}.
When working $\ell$-adically,
we interpret $\tilde{H}(X,\bZ_{\ell})$ as a Galois representation rather
than a Hodge structure.  
Observe that $\phi_{\cE}$ is 
defined on Hodge structures, de Rham cohomologies, and
$\ell$-adic cohomologies---and these are all compatible.

\subsection{Characterizations over the complex numbers}
\begin{theo} \cite[\S 3]{Orlov}  \label{theo:orlov}
Let $X$ and $Y$ be K3 surfaces over $\bC$, with transcendental
cohomology groups
$$T(X):=\Pic(X)^{\perp} \subset H^2(X,\bZ), \quad
T(Y):=\Pic(Y)^{\perp} \subset H^2(Y,\bZ).$$
The following are equivalent
\begin{itemize}
\item{there exists an isometry of Hodge structures $\tilde{H}(X,\bZ)
\simeq \tilde{H}(Y,\bZ)$;}
\item{there exists an isometry of Hodge structures $T(X) \simeq T(Y)$;}
\item{$X$ and $Y$ are derived equivalent;}
\item{$Y$ is isomorphic to a moduli space of stable vector bundles over $X$,
admitting a universal family $\cE \ra X \times Y$, i.e., $Y=M_v(X)$ for a 
Mukai vector $v$ such that there exists a Mukai vector $w$ with $\left(v,w\right)=1$.}
\end{itemize}
\end{theo}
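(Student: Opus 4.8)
The plan is to prove the cycle of implications $(3)\Rightarrow(1)\Rightarrow(4)\Rightarrow(3)$ together with the equivalence $(1)\Leftrightarrow(2)$, so that all four conditions become mutually equivalent. Conditions $(1)$ and $(2)$ are lattice-theoretic and I would dispatch them first; the passage $(1)\Rightarrow(4)$, which manufactures $Y$ as a moduli space of sheaves on $X$, carries the geometric substance and is where I expect the real work to lie.

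For $(3)\Rightarrow(1)$, suppose $\Phi=\Phi_{\cE}$ is a Fourier--Mukai equivalence. Its quasi-inverse is again a Fourier--Mukai transform by Orlov's theorem \cite{Orlov}, and the functoriality of the cohomological correspondence $\phi_{\cE}$ forces $\phi_{\cE}$ to be invertible. The kernel $\sqrt{\Td_{X\times Y}}\cdot\ch(\cE)$ is an algebraic class, hence of type $(p,p)$, so $\phi_{\cE}$ is a morphism of weight-two Hodge structures; moreover, since an equivalence preserves the Euler form $\chi(\Phi\cF,\Phi\cK)=\chi(\cF,\cK)$ and the Mukai pairing is the cohomological shadow of this form via Riemann--Roch, $\phi_{\cE}$ preserves the Mukai pairing. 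Thus $\phi_{\cE}$ is a Hodge isometry, giving $(1)$. For $(1)\Leftrightarrow(2)$: the summands $H^0$ and $H^4$ of $\tilde H(X,\bZ)$ are of type $(1,1)$, so the transcendental part of the Mukai Hodge structure coincides with $T(X)$; any Hodge isometry $\tilde H(X,\bZ)\simeq\tilde H(Y,\bZ)$ carries $\tilde H^{2,0}(X)$ to $\tilde H^{2,0}(Y)$ and hence restricts to a Hodge isometry $T(X)\simeq T(Y)$, proving $(1)\Rightarrow(2)$. Conversely, $\tilde H(X,\bZ)$ is the even unimodular lattice of signature $(4,20)$, independent of the surface, into which $T(X)$ embeds primitively with orthogonal complement the algebraic Mukai lattice containing the hyperbolic plane $H^0\oplus H^4$. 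By Nikulin's theory of discriminant forms, an isometry $T(X)\simeq T(Y)$ produces matching discriminant forms on the complements, and the hyperbolic summand guarantees it extends to an isometry of the full lattices; as the complementary parts are of type $(1,1)$, the extension is automatically a Hodge isometry, giving $(2)\Rightarrow(1)$.

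The heart of the argument is $(1)\Rightarrow(4)$. Let $g\colon\tilde H(X,\bZ)\to\tilde H(Y,\bZ)$ be the given Hodge isometry and set $v:=g^{-1}(0,0,1)$, the preimage of the Mukai vector of a point of $Y$. Then $v$ is a primitive, isotropic class of type $(1,1)$, that is, a genuine Mukai vector on $X$ with $v^2=0$, while $w:=-g^{-1}(1,0,0)$ is a type-$(1,1)$ class with $(v,w)=1$, the fineness condition. Choosing a $v$-generic polarization, Mukai's theory \cite{Mukai} shows that $M_v(X)$ is a K3 surface carrying a universal family $\cE$, and the family induces a Hodge isometry $\psi\colon\tilde H(M_v(X),\bZ)\to\tilde H(X,\bZ)$ sending the point class $(0,0,1)$ to $v$. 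The composite $g\circ\psi$ is then a Hodge isometry of Mukai lattices taking point class to point class, so it restricts to a Hodge isometry $H^2(M_v(X),\bZ)\simeq H^2(Y,\bZ)$; the Torelli theorem for K3 surfaces then yields $M_v(X)\cong Y$, establishing $(4)$. I expect this to be the main obstacle: one must produce a nonempty, fine moduli space of the correct dimension through Mukai's construction and then pin down its isomorphism type via global Torelli, which draws on the full moduli theory of sheaves on K3 surfaces.

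Finally, for $(4)\Rightarrow(3)$, given the universal family $\cE$ on $X\times M_v(X)$ I would verify Bridgeland's criterion for $\Phi_{\cE}$ to be an equivalence. For distinct points the corresponding stable sheaves are non-isomorphic with the same reduced Hilbert polynomial, so $\operatorname{Hom}$ and, by Serre duality, $\Ext^2$ vanish, while $\chi=-(v,v)=0$ forces $\Ext^1$ to vanish as well; for a single point, simplicity of a stable sheaf gives the expected two-dimensional $\Ext^1$. These orthogonality relations, together with the triviality of the canonical bundles of $X$ and $M_v(X)$, are exactly Bridgeland's hypotheses, so $\Phi_{\cE}$ is fully faithful and hence a derived equivalence, completing the cycle.
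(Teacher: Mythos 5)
The paper does not actually prove this theorem---it is quoted from Orlov \cite{Orlov} (with pointers to \cite{HuyJAG08} for the fourth condition)---so there is no internal proof to compare against. Your reconstruction follows the standard route of those references: $(3)\Rightarrow(1)$ via algebraicity of the cohomological kernel and compatibility of the Mukai pairing with the Euler form; $(1)\Leftrightarrow(2)$ via Nikulin's extension theory, using that the orthogonal complement of $T(X)$ in the (abstract, surface-independent) even unimodular lattice of signature $(4,20)$ contains the hyperbolic summand $H^0\oplus H^4$; $(1)\Rightarrow(4)$ via Mukai's moduli theory plus Torelli; and $(4)\Rightarrow(3)$ via the Bondal--Orlov/Bridgeland criterion, where your verification of the orthogonality conditions is correct. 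All of this is sound in outline.

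The one genuine gap is in $(1)\Rightarrow(4)$: you set $v=g^{-1}(0,0,1)$ and immediately declare it ``a genuine Mukai vector,'' but nothing forces $v=(r,\ell,s)$ to have $r>0$ (or $r=0$ with $\ell$ effective), and Mukai's construction of a nonempty two-dimensional fine moduli space $M_v(X)$ requires such positivity. The standard repair is to first compose $g$ with isometries that are themselves induced by autoequivalences of $D^b(X)$---the shift ($v\mapsto -v$), tensoring by a line bundle, and the spherical twist by $\cO_X$ (reflection in $(1,0,1)$)---so as to normalize $v$ into positive form, treating $v=(0,0,\pm 1)$ separately (there Torelli gives $Y\simeq X$ directly). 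Without this step the sentence ``Mukai's theory shows that $M_v(X)$ is a K3 surface carrying a universal family'' can fail as stated. A smaller cosmetic point: the isometry $g\circ\psi$ fixes $(0,0,1)$ but need not preserve the summand $H^2\subset\tilde{H}$; the induced Hodge isometry of second cohomologies is obtained from the subquotient $(0,0,1)^{\perp}/\bZ(0,0,1)$, which is what one feeds into the Torelli theorem.
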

See \cite[\S 3.8]{Orlov} and \cite[\S 3]{HuyJAG08}
for discussion of the fourth condition; for our purposes, we 
need not distinguish among various notions of stability.
This has been extended to arbitrary fields as follows:
\begin{theo} \cite[Th.~1.1]{LO} \label{theo:LO}
Let $X$ and $Y$ be K3 surfaces over an algebraically closed
field $F$ of characteristic $\neq 2$.  Then the third and the fourth
statements are equivalent.
\end{theo}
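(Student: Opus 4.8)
The plan is to prove the two implications $(4)\Rightarrow(3)$ and $(3)\Rightarrow(4)$ separately, deducing the second from the complex case (Theorem~\ref{theo:orlov}) by a lifting argument. When $\operatorname{char}F=0$ a base-change/Lefschetz-principle argument reduces everything to $\bC$, so the substantive case is $\operatorname{char}F=p>0$.

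\textbf{The direction $(4)\Rightarrow(3)$.} Suppose $Y=M_v(X)$ with $v$ admitting a Mukai vector $w$ such that $(v,w)=1$. First I would record that over a field of characteristic $\neq 2$ the moduli space of stable sheaves with Mukai vector $v$ is a smooth projective surface, its smoothness coming from the vanishing of the trace-free obstruction $\Ext^2(\cE_y,\cE_y)_0$ for sheaves on a K3. The condition $(v,w)=1$ forces the moduli problem to be fine, so a universal sheaf $\cE$ exists on $X\times Y$ and serves as a Fourier--Mukai kernel. To show $\Phi_{\cE}$ is an equivalence I would invoke Bridgeland's criterion: since $X$ and $Y$ both have trivial canonical class, it suffices to prove that $\Phi_{\cE}$ is fully faithful, and full faithfulness can be tested on the spanning class of skyscraper sheaves $\cO_y$. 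Concretely this reduces to two facts about the stable sheaves $\cE_y:=\Phi_{\cE}(\cO_y)$: distinct points give non-isomorphic stable sheaves with no homomorphisms between them, and the self-extension groups $\Ext^{\bullet}(\cE_y,\cE_y)$ have the dimensions prescribed by $(v,v)=0$. Both follow from stability together with Serre duality, and this part of the argument is insensitive to the characteristic beyond the smoothness input.

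\textbf{The direction $(3)\Rightarrow(4)$.} Here I would start with an equivalence $\Phi=\Phi_{\cE}$, represented by a perfect complex $\cE$ on $X\times Y$, and aim to realize $Y$ as a moduli space of sheaves on $X$. Over $\bC$ this is precisely the fourth clause of Theorem~\ref{theo:orlov}, obtained by normalizing $\Phi$ (composing with shifts, line-bundle twists, and spherical twists) so that the image of each $\cO_y$ under a quasi-inverse becomes a stable sheaf on $X$; the common Mukai vector $v:=\phi_{\cE^{\vee}}(\text{class of a point})$ then exhibits $Y$ as $M_v(X)$, and $(v,w)=1$ follows because the quasi-inverse kernel supplies the required $w$. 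To transport this to characteristic $p$, I would lift the whole configuration to mixed characteristic: choose a lift $\mathcal X$ of $X$ over a complete discrete valuation ring with residue field $F$, using the unobstructed deformation theory of K3 surfaces, then lift the kernel $\cE$ to a perfect complex $\widetilde{\cE}$ over $\mathcal X\times\mathcal Y$ for a simultaneous lift $\mathcal Y$ of $Y$. One checks that $\Phi_{\widetilde{\cE}}$ remains an equivalence fibrewise, applies the complex case to the geometric generic fiber to write $\mathcal Y_{\bar\eta}=M_v(\mathcal X_{\bar\eta})$, and finally specializes the moduli description to the closed fiber. Compatibility of $\phi_{\cE}$ with the $\ell$-adic and de Rham realizations, recorded above, is what lets me identify the specialized Mukai vector and verify $(v,w)=1$.

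\textbf{The main obstacle.} I expect the crux to be the lifting step in $(3)\Rightarrow(4)$, in two forms. First, lifting the Fourier--Mukai kernel: the obstruction to deforming the perfect complex $\cE$ lives in a second Ext (Hochschild) group on $X\times Y$, and one must show it vanishes or choose the lift $\mathcal Y$ compatibly so that the equivalence persists in the family; this requires the deformation theory of objects in the derived category rather than of sheaves. Second, controlling the specialization: one must ensure that the stable sheaves produced over $\mathcal Y_{\bar\eta}$ stay stable and genuinely sheaf-like over the closed fiber, so that the generic isomorphism $\mathcal Y_{\bar\eta}\cong M_v(\mathcal X_{\bar\eta})$ degenerates to an honest isomorphism $Y\cong M_v(X)$ and not merely to a correspondence. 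The hypothesis $\operatorname{char}F\neq 2$ enters precisely in guaranteeing the good behaviour of these moduli spaces and lifts, and its removal is the source of the subtlety flagged in the statement.
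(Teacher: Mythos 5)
The paper offers no proof of this statement: it is imported wholesale from Lieblich--Olsson \cite[Th.~1.1]{LO}, so there is no internal argument to measure your attempt against. Judged against the actual proof in \cite{LO}, your outline has the right overall shape: $(4)\Rightarrow(3)$ is the characteristic-independent Mukai/Bridgeland argument (fineness of the moduli space from $\left(v,w\right)=1$, full faithfulness tested on skyscraper sheaves, triviality of the canonical bundles), and $(3)\Rightarrow(4)$ proceeds by lifting to mixed characteristic, applying the complex case (Theorem~\ref{theo:orlov}) to the geometric generic fibre, and specializing back.

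However, in the hard direction you have correctly flagged the crux --- lifting the kernel --- without supplying the idea that resolves it, and that idea is the substantive content of \cite{LO}. You propose to choose a lift $\mathcal X$ of $X$ and then lift $\cE$ ``for a simultaneous lift $\mathcal Y$ of $Y$,'' conceding that the obstruction must ``vanish or the lift be chosen compatibly''; for independently chosen lifts it does not vanish. The mechanism that makes this work is that the equivalence itself dictates the lift of $Y$: the kernel induces an identification of the relevant deformation and obstruction spaces of $X$ and $Y$ (a Hochschild-cohomological comparison), so that each deformation of $X$ determines an associated deformation of $Y$ along which, and essentially only along which, $\cE$ deforms. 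One must further arrange that a polarization lifts so that the generic fibre is projective and Theorem~\ref{theo:orlov} applies, and the final step from $\mathcal Y_{\eta}\simeq M_v(\mathcal X_{\eta})$ to $Y\simeq M_v(X)$ is not automatic: it needs the relative moduli space to be a smooth proper family of K3 surfaces over the discrete valuation ring together with a Matsusaka--Mumford type extension argument to spread the generic isomorphism over the closed fibre. Your diagnosis of where $\operatorname{char} F\neq 2$ enters (``good behaviour of moduli spaces and lifts'') is too vague to be checkable. As written, the proposal is a plausible plan rather than a proof.
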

Morover, a derived equivalence between $X$ and $Y$ induces Galois-compatible 
isomorphisms between their $\ell$-adic Mukai lattices.
See \cite[\S 16.4]{HuyK3} for more discussion.

\subsection{Descending derived equivalence}
Let $F$ be a field of characteristic zero with algebraic closure $\bar{F}$.
Given $Y$ smooth and projective 
over $F$, let
$\bar{Y}$ denote the corresponding variety
over $\bar{F}$.
We say $Y$ is {\em of K3 type} if $\bar{Y}$ is deformation 
equivalent to the Hilbert scheme of a K3 surface.

\begin{lemm} \label{lemm:HKdescend}
Suppose $Y_1$ and $Y_2$ are of K3 type over $F$ and there exists an isomorphism
$\iota: \bar{Y}_1 \stackrel{\sim}{\ra} \bar{Y}_2$ defined over $\bar{F}$
inducing
$$\iota^*: H^2(\bar{Y}_2,\bZ_{\ell}) \stackrel{\sim}{\ra} H^2(\bar{Y}_1,\bZ_{\ell})$$
compatible with the action of $\Gal(\bar{F}/F)$. Then $Y_1 \simeq Y_2$ over $F$.
\end{lemm}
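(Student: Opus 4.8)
The plan is to exploit the global Torelli theorem for holomorphic symplectic manifolds to convert the Galois-compatible cohomological datum into a descent datum for the isomorphism $\iota$ itself. The point is that $Y_1$ and $Y_2$, being of K3 type, are instances of the rigidity phenomenon controlled by Torelli: isomorphisms of such varieties are detected on second cohomology. So the structure of the argument is (i) produce a canonical isomorphism over $\bar{F}$ from the Hodge-theoretic/cohomological input, then (ii) show that this canonical isomorphism is $\Gal(\bar{F}/F)$-invariant, and (iii) conclude by Galois descent for morphisms of quasi-projective varieties that the isomorphism is already defined over $F$.

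First I would recall that for projective holomorphic symplectic manifolds the Torelli theorem (Verbitsky, Huybrechts \cite{Verb,HuySur}) implies that an isometry $H^2(\bar{Y}_2,\bZ)\stackrel{\sim}{\ra} H^2(\bar{Y}_1,\bZ)$ which is a Hodge isometry, preserves the K\"ahler/ample cone orientation, and respects the Beauville--Bogomolov form, is induced by a \emph{unique} isomorphism $\bar{Y}_1 \stackrel{\sim}{\ra}\bar{Y}_2$ (up to the subtleties of the monodromy/parallel-transport constraints). The hypothesis hands us one such isomorphism $\iota$ already, so uniqueness is the feature I want: $\iota$ is the unique isomorphism inducing $\iota^*$ on $H^2$. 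The next step is to transport this to the $\ell$-adic setting, using that the comparison isomorphism between Betti and $\ell$-adic cohomology is functorial, so that $\iota^*$ on $H^2(\bar{Y}_2,\bZ_\ell)$ is the $\ell$-adic realization of the Betti-level $\iota^*$, and the Galois-compatibility hypothesis is a statement about this realization.

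The heart of the argument is the $\Gal(\bar{F}/F)$-invariance of $\iota$. For each $\sigma \in \Gal(\bar{F}/F)$ form the conjugate isomorphism ${}^\sigma\iota: \bar{Y}_1 \stackrel{\sim}{\ra}\bar{Y}_2$ (using that $Y_1,Y_2$ are defined over $F$, so $\bar{Y}_i$ carries a semilinear $\sigma$-action). Then $\iota^{-1}\circ {}^\sigma\iota$ is an automorphism of $\bar{Y}_1$, and I would compute its action on $H^2(\bar{Y}_1,\bZ_\ell)$. The Galois-compatibility of $\iota^*$ says precisely that ${}^\sigma\iota$ and $\iota$ induce the \emph{same} map on $H^2(\bar{Y}_1,\bZ_\ell)$, so $\iota^{-1}\circ {}^\sigma\iota$ acts trivially on $H^2$. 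Now I invoke Torelli-type uniqueness one more time, in its automorphism form: for a variety of K3 type the map $\Aut(\bar{Y}_1)\ra \operatorname{O}(H^2(\bar{Y}_1,\bZ_\ell))$ has image detecting the automorphism, i.e., an automorphism acting trivially on $H^2$ is the identity (this uses that such varieties have no nontrivial symplectic-trivial automorphisms acting trivially on $H^2$, a consequence of the injectivity part of Torelli together with the fact that $H^{2,0}$ is recovered from $H^2$). Hence $\iota^{-1}\circ{}^\sigma\iota = \mathrm{id}$, so ${}^\sigma\iota = \iota$ for all $\sigma$.

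The main obstacle I anticipate is exactly this injectivity-of-the-automorphism-representation step: one must rule out nontrivial automorphisms of $\bar{Y}_1$ acting trivially on $H^2(\bar{Y}_1,\bZ_\ell)$. Over $\bC$ this is classical for K3 surfaces and extends to higher-dimensional K3 type by the strong Torelli theorem, but I would want to be careful that the trivial action on $H^2$ forces the action on the Hodge structure, hence on the canonical symplectic form, to be trivial, and then that an automorphism fixing the symplectic form and acting trivially on $H^2$ must be the identity on a variety of general K3 type; passing to $\ell$-adic cohomology via the comparison isomorphism preserves this. Once invariance ${}^\sigma\iota=\iota$ is established, the conclusion is routine Galois descent: a morphism between $F$-varieties that is $\Gal(\bar{F}/F)$-invariant after base change to $\bar{F}$ descends to $F$, and the same applies to its inverse, yielding an isomorphism $Y_1\simeq Y_2$ over $F$.
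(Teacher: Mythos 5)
Your proposal is correct and follows essentially the same route as the paper: both arguments reduce Galois descent of $\iota$ to the faithfulness of the representation of $\Aut(\bar{Y}_1)$ on second cohomology for manifolds of K3 type (the paper cites Markman, after Beauville and Kaledin--Verbitsky, for exactly the injectivity step you flag as the main obstacle). Your explicit computation with $\iota^{-1}\circ{}^{\sigma}\iota$ is just a spelled-out version of the paper's remark that automorphisms act faithfully and transitively on $\Isom(\bar{Y}_1,\bar{Y}_2)$ by pre-composition.
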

\begin{proof}
In characteristic zero,
Galois fixed points of $\Isom(\bar{Y}_1,\bar{Y}_2)$ 
correspond to isomorphisms between the varieties defined over $F$.
Automorphisms of $\bar{Y}_1$ (resp.~$\bar{Y}_2$) act transitively and faithfully
on this set by pre-composition (resp.~post-composition).

The Torelli theorem implies that the automorphism group of a K3 surface
has a faithful representation in its second cohomology.  This holds true for manifolds
of K3 type as well---see \cite[Prop.~1.9]{Markman2010} as well as previous
work of Beauville and Kaledin-Verbitsky.
The Galois invariance of $\iota^*$ implies that
$\iota$ is Galois-fixed, hence defined over $F$.
\end{proof}

Suppose that $X$ and $Y$ are K3 surfaces over $F$.
Given a derived equivalence 
$$\Phi:D(X) \stackrel{\sim}{\ra} D(Y)$$
over $F$ the induced
$$\phi: \tilde{H}(\bar{X},\bZ_{\ell})\stackrel{\sim}{\ra} \tilde{H}(\bar{Y},\bZ_{\ell})$$
is compatible $\Gal(\bar{F}/F)$ actions. We consider whether the
converse holds.

We use the notation $\cM_v(X)$ for the moduli {\em stack} of vector
bundles/complexes over $X$; this is typically a $\bG_m$-gerbe over
$M_v(X)$ due to homotheties.
\begin{prop} \label{prop:descend}
Suppose that $X$ and $Y$ are K3 surfaces over $F$.
Let
$\Phi:D(\bar{X})\stackrel{\sim}{\ra} D(\bar{Y})$ be a derived
equivalence over $\bar{F}$ such that the
induced 
$$\phi: \tilde{H}(\bar{X},\bZ_{\ell})\stackrel{\sim}{\ra} \tilde{H}(\bar{Y},\bZ_{\ell})$$
is compatible with Galois actions. Write $v=\phi^{-1}(1,0,0)$
and $w=\phi(1,0,0)$. 

Then we have isomorphisms $Y\simeq M_v(X)$ and $X\simeq M_w(Y)$
inducing the derived equivlance over $\bar{F}$.
Moreover, there exist $\bG_m$-gerbes $\cX\ra X$ and $\cY\ra Y$
and isomorphisms $\cY\simeq \cM_v(X)$ and $\cX\simeq \cM_w(Y)$ such that
$\cX\ra X$ and $\cY \ra Y$ admit trivializations over $\bar{F}$.
\end{prop}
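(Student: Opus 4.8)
The plan is to prove everything over $\bar F$ first, using Theorem~\ref{theo:LO}, and then to descend to $F$ by means of Lemma~\ref{lemm:HKdescend}. Over the algebraically closed field $\bar F$, the fourth characterization in Theorem~\ref{theo:orlov}/Theorem~\ref{theo:LO} exhibits $\bar Y$ as a moduli space of stable complexes on $\bar X$: one has $\bar Y \simeq M_{\bar v}(\bar X)$ with $\bar v = \phi^{-1}(1,0,0)$, and symmetrically $\bar X \simeq M_{\bar w}(\bar Y)$ with $\bar w = \phi(1,0,0)$, the isomorphisms being those induced by $\Phi$ and $\Phi^{-1}$. The first point is that in the normalization of the $\ell$-adic Mukai lattice of \cite[\S 2]{LO} the canonical algebraic generator $(1,0,0)$ is Galois-invariant; since $\phi$ is Galois-equivariant by hypothesis, the Mukai vectors $v$ and $w$ are Galois-invariant as well. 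Together with the Galois-invariant polarization on $X$ (resp.\ $Y$), this shows that the moduli stacks $\cM_v(X)$ and $\cM_w(Y)$, and their coarse spaces $M_v(X)$ and $M_w(Y)$, are all defined over $F$; in particular $M_v(X)$ is a K3 surface over $F$ whose base change to $\bar F$ is $M_{\bar v}(\bar X)$.

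Next I would descend the isomorphism $\bar Y \simeq M_{\bar v}(\bar X)$. Both $Y$ and $M_v(X)$ are K3 surfaces over $F$, so Lemma~\ref{lemm:HKdescend} applies once the $\bar F$-isomorphism is shown to act on $H^2(\cdot,\bZ_\ell)$ compatibly with $\Gal(\bar F/F)$. For this I would invoke the canonical Mukai isometry $H^2(\overline{M_v(X)},\bZ_\ell)\simeq \bar v^{\perp}/\bZ_\ell\bar v$, which is defined over $F$---hence Galois-equivariant---precisely because $v$ is Galois-invariant. Since $\phi(v)=(1,0,0)$ and $(1,0,0)^{\perp}/\bZ_\ell(1,0,0)\simeq H^2(\bar Y,\bZ_\ell)$ canonically, the Galois-equivariant isometry $\phi$ carries $\bar v^{\perp}/\bZ_\ell\bar v$ isomorphically and Galois-equivariantly onto $H^2(\bar Y,\bZ_\ell)$; this composite is exactly the cohomological realization of the geometric isomorphism $\bar Y\simeq M_{\bar v}(\bar X)$. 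Lemma~\ref{lemm:HKdescend} then yields $Y\simeq M_v(X)$ over $F$, and the same argument applied to $\Phi^{-1}$ gives $X\simeq M_w(Y)$ over $F$. Both descend the given derived equivalence by construction.

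For the gerbe statement I would set $\cY:=\cM_v(X)$ and $\cX:=\cM_w(Y)$, viewed through the isomorphisms of the previous paragraph as stacks over $Y$ and $X$. Each is a $\bG_m$-gerbe over its coarse space because stable objects are simple, so their automorphisms are exactly the homotheties $\bG_m$; and each is defined over $F$ by the first paragraph. The trivialization over $\bar F$ is the existence of a universal family on $\bar X\times M_{\bar v}(\bar X)$, which is built into the fourth condition of Theorem~\ref{theo:orlov}: a section of the gerbe $\overline{\cM_v(X)}\to \overline{M_v(X)}$ is the same datum as a universal sheaf/complex, and such a family exists over $\bar F$ because the Galois-invariant class $w''=\phi^{-1}(0,0,-1)$ satisfies $(v,w'')=1$ and provides the required normalization. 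Hence $\cY\to Y$ and, symmetrically, $\cX\to X$ admit trivializations over $\bar F$.

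The step I expect to be most delicate is the cohomological bookkeeping of the second paragraph: one must check that the geometric isomorphism coming from the moduli interpretation induces on $H^2$ exactly the Galois-equivariant map determined by $\phi$, with no stray twist, so that Lemma~\ref{lemm:HKdescend} is genuinely applicable. A second, more conceptual point deserving emphasis is that the class $w''$ above, though Galois-invariant and satisfying $(v,w'')=1$, need not be represented by an actual object of $D(X)$ over $F$; this is exactly why the gerbes $\cY\to Y$ and $\cX\to X$ may fail to be trivial over $F$ even though they trivialize over $\bar F$, and it is the Brauer-class obstruction underlying the subtlety of Questions~\ref{ques:one} and~\ref{ques:two}.
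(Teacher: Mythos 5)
Your proposal is correct and follows essentially the same route as the paper: Galois-invariance of $v$, $w$, and the auxiliary class $\phi^{-1}(0,0,-1)$ pairing to $1$ with $v$ (which yields the universal family over $\bar F$ and hence the trivialization of the gerbes), the moduli-space realization over $\bar F$, and descent of the isomorphism $\bar Y \simeq M_v(\bar X)$ via Lemma~\ref{lemm:HKdescend}. The one point the paper makes explicit that you elide is that the polarization used to form $M_v(X)$ must be chosen $v$-generic (avoiding walls orthogonal to certain Mukai vectors) so that $M_v(X)$ is actually a K3 surface, and that such a polarization can be found over the non-closed field $F$.
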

\begin{proof}
Choosing an appropriate polarization on $X$, the moduli space 
$M_v({\bar X})$ is a K3 surface. This follows from \cite[Prop.~4.2]{Mukai};
Mukai's argument shows that the requisite polarization can be found
over a non-closed field, as we just have to avoid walls orthogonal
to certain Mukai vectors.

There exists a Mukai vector $v'\in \tilde{H}(\bar{X},\bZ_{\ell})$
that is Galois invariant and satisfies $\left(v,v'\right)=1$, e.g.,
$v'=\phi^{-1}(0,0,-1)$. Thus the universal sheaf
$$\cE \ra X \times \cM_v(X)$$
has the following property: on basechange to $\bar{F}$ 
it may be obtained as the pull-back of a universal
sheaf on the coarse space
$$\bar{\cE} \ra \bar{X} \times M_v(\bar{X}).$$
Indeed, the universal sheaf exists wherever $M_{v'}(X)$ admits a
rational point
(see ~\cite[Th.~3.16]{LO}).

We apply Lemma~\ref{lemm:HKdescend} to $Y$ and $M_v(X)$. The Torelli Theorem
implies $\bar{Y}\simeq M_v(\bar{X})$ with the induced isomorphism
compatible with the Galois actions on $\ell$-adic cohomology.
Thus it descends to give $Y\simeq M_v(X)$ defined over $F$.
\end{proof}

\begin{rema}
Can $X$ and $Y$ fail to be derived equivalent
over $F$? How do the pull-back homomorphisms
$$\Br(F) \ra \Br(X), \quad \Br(F) \ra \Br(Y)$$
compare? They must have the same kernel if $X$ and $Y$ are derived
equivalent over $F$.
\end{rema}

\begin{rema}
Sosna \cite{Sosna} has given examples of complex K3 surfaces
that are derived equivalent but not isomorphic to their
complex conjugates.
\end{rema}

\subsection{Cycle-theoretic invariants of derived equivalence}
\begin{prop} \label{prop:PicBr}
Let $X$ and $Y$ be derived equivalent K3 surfaces over a field $F$
of characteristic $\neq 2$.
Then $\Pic(X)$ and $\Pic(Y)$ are stably
isomorphic as $\Gal(\bar{F}/F)$-modules,
and $\Br(X)[n]\simeq \Br(Y)[n]$
provided $n$ is not divisible by the characteristic.  
\end{prop}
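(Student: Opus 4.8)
The plan is to run everything through the Galois-compatible Mukai-lattice isometry attached to the equivalence. By Theorem~\ref{theo:LO} and the remark following it---this is where $\operatorname{char}F\neq 2$ is used---a derived equivalence $\Phi:D^b(X)\xrightarrow{\sim}D^b(Y)$ over $F$, with Fourier--Mukai kernel $\cE\in D^b(X\times Y)$ defined over $F$, induces for every $\ell\neq\operatorname{char}F$ a Galois-equivariant isometry $\phi:\tilde H(\bar X,\bZ_\ell)\xrightarrow{\sim}\tilde H(\bar Y,\bZ_\ell)$, compatibly in $\ell$. Since $\phi_\cE\,\ch(\cF)=\ch(\Phi_\cE(\cF))$, the map $\phi$ sends Mukai vectors of objects to Mukai vectors of objects; as the algebraic sublattice $\tilde N(\bar X)=H^0\oplus\Pic(\bar X)\oplus H^4$ is generated by such vectors, $\phi$ restricts to a Galois-equivariant isometry $\tilde N(\bar X)\otimes\bZ_\ell\xrightarrow{\sim}\tilde N(\bar Y)\otimes\bZ_\ell$ and, by taking orthogonal complements, to one of transcendental lattices $T(\bar X)\otimes\bZ_\ell\xrightarrow{\sim}T(\bar Y)\otimes\bZ_\ell$. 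Integrally, the same kernel gives a Galois-equivariant isometry of numerical Grothendieck groups $K_{\mathrm{num}}(\bar X)\xrightarrow{\sim}K_{\mathrm{num}}(\bar Y)$, since $\bar\cE$ is Galois-invariant up to isomorphism.

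For the Picard statement I would use that $K_{\mathrm{num}}(\bar X)$ contains two Galois-trivial classes: $[\cO_{\bar X}]$, fixed because $\cO_X$ is defined over $F$, and $[\cO_x]$ for a closed point $x$, fixed because all points are numerically equivalent. Via the Mukai vector these span the hyperbolic plane $U=H^0\oplus H^4$. The rank map $K_{\mathrm{num}}(\bar X)\to\bZ$ is Galois-equivariant and split by $[\cO_{\bar X}]$, and on its kernel the map $c_1$ onto $\Pic(\bar X)$ is split Galois-equivariantly by $D\mapsto(0,D,0)$ in Mukai coordinates. This produces a Galois-equivariant splitting $K_{\mathrm{num}}(\bar X)\simeq\bZ^2\oplus\Pic(\bar X)$ with $\bZ^2$ trivial, and likewise for $Y$; composing with the isometry above gives $\Pic(\bar X)\oplus\bZ^2\simeq\Pic(\bar Y)\oplus\bZ^2$, the asserted stable isomorphism of Galois modules.

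For the Brauer statement I would first establish the geometric version $\Br(\bar X)[n]\simeq\Br(\bar Y)[n]$ as Galois modules, reducing to $n=\ell^k$ by the Chinese remainder theorem. As the integral cohomology of a K3 surface is torsion-free, the Kummer sequence yields a Galois-equivariant identification
\[
\Br(\bar X)[n]\simeq\operatorname{coker}\!\big(\Pic(\bar X)/n\to H^2(\bar X,\mu_n)\big)\simeq\big(\tilde H(\bar X,\bZ_\ell)/\tilde N(\bar X,\bZ_\ell)\big)\otimes\mu_n,
\]
the last step because $\tilde H/\tilde N=H^2/\Pic$. Since $\phi$ carries $\tilde N(\bar X,\bZ_\ell)$ onto $\tilde N(\bar Y,\bZ_\ell)$ and is compatible with the Tate twist, it descends to an isomorphism of these quotients, giving $\Br(\bar X)[n]\simeq\Br(\bar Y)[n]$.

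The hard part will be descending to the Brauer groups of the $F$-schemes. I would feed the two geometric outputs into the Hochschild--Serre spectral sequence $H^p(F,H^q(\bar X,\bG_m))\Rightarrow H^{p+q}(X,\bG_m)$. The algebraic part $\ker(\Br X\to\Br\bar X)$ is an extension of a subgroup of $H^1(F,\Pic(\bar X))$ by $\Br(F)$; because $H^1(F,\bZ)=0$, the trivial summands in the stable isomorphism do not contribute, so $H^1(F,\Pic(\bar X))\simeq H^1(F,\Pic(\bar Y))$. The transcendental part injects into $\Br(\bar X)^{\Gal}$, which matches $Y$ by the previous paragraph. The genuine obstacle is the compatibility of the spectral-sequence differentials---the transgression $H^1(F,\Pic)\to H^3(F,\bG_m)$ and $d_2:\Br(\bar X)^{\Gal}\to H^2(F,\Pic(\bar X))$---under these identifications; one must verify that all the isomorphisms above are induced uniformly by the single kernel $\cE$, so that they commute with the differentials. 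Granting this bookkeeping, assembling the algebraic and transcendental contributions on $n$-torsion yields $\Br(X)[n]\simeq\Br(Y)[n]$.
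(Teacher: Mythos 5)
Your Picard argument is sound and is essentially the paper's: the paper deduces the stable isomorphism of Galois modules from the Chow-theoretic realization of the Fourier--Mukai kernel (citing \cite[\S 2.7]{LO}), and your explicit Galois-equivariant splitting of $K_{\mathrm{num}}(\bar{X})$ as $\bZ^2\oplus\Pic(\bar{X})$ is a reasonable way to make that precise. Your computation of the geometric Brauer groups, $\Br(\bar{X})[n]\simeq\Br(\bar{Y})[n]$ as Galois modules via the Kummer sequence and the quotient $\tilde{H}/\tilde{N}$, is also fine.

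The gap is exactly where you flagged it, and it is not mere bookkeeping. Descending from $\Br(\bar{X})[n]$ to $\Br(X)[n]$ through Hochschild--Serre requires (i) compatibility of all your identifications with the differentials and the transgression, (ii) a comparison of the images of $\Br(F)$ in $\Br(X)$ and $\Br(Y)$ --- the paper explicitly singles out the comparison of the kernels of $\Br(F)\ra\Br(X)$ and $\Br(F)\ra\Br(Y)$ as a nontrivial point in a separate remark --- and (iii) even granting (i) and (ii), an isomorphism of the graded pieces of a filtration does not by itself yield an isomorphism of the filtered groups: there is an unresolved extension problem. The paper's proof avoids all of this by never passing to $\bar{F}$. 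Since the kernel $\cE$ lives on $X\times Y$ over $F$, its mod-$n$ cycle classes define a correspondence on the \emph{absolute} \'etale Mukai lattice
$\tilde{H}_{\text{\'et}}(X,\mu_n)=H^0_{\text{\'et}}(X,\bZ/n\bZ)\oplus H^2_{\text{\'et}}(X,\mu_n)\oplus H^4_{\text{\'et}}(X,\mu_n^{\otimes 2})$
of the $F$-scheme $X$ itself, compatible with the cycle class maps from $\CH^*(X)$ rather than $\CH^*(\bar{X})$ \cite[Prop.~2.10]{LO}. The Kummer sequence over $F$ identifies $\Br(X)[n]$ with $H^2_{\text{\'et}}(X,\mu_n)$ modulo $\Pic(X)/n$ directly, so modding out by the images of the cycle class maps gives the conclusion with no spectral sequence and no descent. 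You should replace your final paragraph with this direct argument on absolute \'etale cohomology.
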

Even over $\bC$, this result does not extend to higher dimensional
varieties \cite{Adding}.
\begin{proof}
The statement on the Picard groups follows from the Chow realization
of the Fourier-Mukai transform---see \cite[\S 2.7]{LO} for discussion.
We write
$$\tilde{H}_{\text{\'et}}(X,\mu_n)=H^0_{\text{\'et}}(X,\bZ/n\bZ) \oplus
H^2_{\text{\'et}}(X,\mu_n) \oplus
H^4_{\text{\'et}}(X,\mu_n^{\otimes 2}).$$ 
The Chow-theoretic interpretation of the Fourier-Mukai kernel gives 
the realization
$$\phi:\tilde{H}_{\text{\'et}}(X,\mu_n) \ra \tilde{H}_{\text{\'et}}(Y,\mu_n),$$
compatible with cycle class maps \cite[Prop.~2.10]{LO}.
Modding out by the images
of the cycle class maps we get the desired equality of Brauer groups.
\end{proof}

Recall that the {\em index} $\indx(X)$ of a smooth projective variety $X$ over a field $F$
is the greatest common divisor of the degrees of field extensions $F'/F$ over which
$X(F')\neq \emptyset$, or equivalently, the lengths of zero-dimensional
subschemes $Z\subset X$.

Given a bounded complex of locally free sheaves on $X$
$$E=\{E_{-M} \ra E_{-M+1} \ra \cdots \ra E_N \}$$
we may define the Chern character
$$\ch(E)=\sum_j (-1)^j \ch(E_j)$$
in Chow groups with $\bQ$ coefficients.  
The degree zero and one pieces yield the rank and first 
Chern class of $E$, expressed as alternating sums
of the ranks and determinants of the terms, respectively.  
Similarly, we may define
$$c_2(E)=-\ch_2(E)+\ch_1(E)^2/2,$$
a quadratic expression in the Chern classes of the $E_j$ 
with {\em integer} coefficients.  
Modulo the $\bZ$ algebra generated by the first Chern classes of
the $E_j$, we may write
$$c_2(E)\equiv \sum_j (-1)^j c_2(E_j).$$ 

\begin{lemm} \label{lemm:c2}
If $(S,h)$ is a smooth projective surface over $F$ then
$$\begin{array}{rcl}
\indx(S)&=&\gcd \{c_2(E): E \text{ vector bundle on } S\} \\
&=&\gcd \{c_2(E): E \in D^b(S) \}.
\end{array}
$$
\end{lemm}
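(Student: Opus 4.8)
The plan is to recast the statement as an equality of subgroups of the group of zero-cycles $\CH^2(S)=\CH_0(S)$, and then to push everything forward along the degree map. Recall that the image of $\deg\colon \CH_0(S)\ra\bZ$ is exactly $\indx(S)\bZ$, since it is the subgroup generated by the degrees $[\kappa(x):F]$ of the closed points $x\in S$, whose greatest common divisor is $\indx(S)$ by definition. Because $\deg$ is a homomorphism, $\gcd\{\deg c_2(E):E\in\cS\}$ is the positive generator of $\deg(A_{\cS})$, where $A_{\cS}\subseteq\CH_0(S)$ denotes the subgroup generated by the classes $c_2(E)$, $E\in\cS$. Hence it suffices to show that the subgroup $A$ generated by $c_2(E)$ over all vector bundles $E$, and the (a priori larger) subgroup generated by $c_2(E)$ over all $E\in D^b(S)$, each coincide with $\CH_0(S)$; applying $\deg$ then yields both asserted equalities at once.

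The case of complexes is immediate. For a closed point $x$ the skyscraper $\cO_x\in D^b(S)$ has $\ch_1(\cO_x)=0$ and $\ch_2(\cO_x)=[x]$, so $c_2(\cO_x)=-[x]$; as the classes $[x]$ generate $\CH_0(S)$, the subgroup attached to $D^b(S)$ is all of $\CH_0(S)$. The real content is to realize these classes using honest vector bundles. First I would fix a closed point $x$ and choose a finite locally free resolution $0\ra E_n\ra\cdots\ra E_0\ra\cO_x\ra 0$, available because $S$ is smooth. Regarding this as a quasi-isomorphism in $D^b(S)$ and using that the Chern character factors through $K_0$, the identity recorded just before the statement gives $c_2(\cO_x)\equiv\sum_j(-1)^j c_2(E_j)$ modulo the $\bZ$-algebra generated by the classes $c_1(E_j)$.

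The main obstacle is precisely the non-additivity of $c_2$: this congruence pins down $-[x]=c_2(\cO_x)$ only up to the error subgroup $\Gamma\subseteq\CH_0(S)$ spanned by the intersection products $c_1(E_i)\cdot c_1(E_j)$ of divisor classes. The point that dissolves it is that each such product is itself a second Chern class of a split vector bundle: for line bundles $L,M$ one has $c_2(L\oplus M)=c_1(L)\,c_1(M)$, and writing $c_1(E_j)=c_1(\det E_j)$ exhibits every generator of $\Gamma$ as a $c_2$, so $\Gamma\subseteq A$. Since also $\sum_j(-1)^j c_2(E_j)\in A$ by construction, the congruence forces $c_2(\cO_x)=-[x]\in A$. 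As the classes $[x]$ generate $\CH_0(S)$ and $A$ is a subgroup, $A=\CH_0(S)$; taking degrees gives $\gcd\{\deg c_2(E):E\text{ a vector bundle}\}=\indx(S)$. The statement for $D^b(S)$ then follows, its associated subgroup containing $A=\CH_0(S)$ while sitting inside $\CH_0(S)$.
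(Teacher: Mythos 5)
Your proof is correct, and it takes a genuinely different route from the paper's. The paper argues at the level of integers: it introduces the ``decomposable index'' $\indd(S)=\gcd\{D_1\cdot D_2\}$, observes that all three quantities divide it, and then proves the two divisibilities separately --- the divisibility $\gcd\{c_2(E)\}\mid\indx(S)$ via a locally free resolution of $\cO_Z$ for $Z$ reduced zero-dimensional, and the reverse divisibility $\indx(S)\mid c_2(E)$ geometrically, by twisting $E$ until it is globally generated and extracting an effective zero-cycle from a generic section. You instead lift the whole statement to $\CH_0(S)$ and prove the stronger assertion that the subgroup $A$ generated by the classes $c_2(E)$ of vector bundles is all of $\CH_0(S)$; both gcd identities then follow in one stroke from $\deg(\CH_0(S))=\indx(S)\bZ$. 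The two arguments share their essential ingredients --- the congruence $c_2(E)\equiv\sum_j(-1)^jc_2(E_j)$ modulo products of first Chern classes, resolutions of skyscraper-type sheaves, and the identity $c_1(L)\,c_1(M)=c_2(L\oplus M)$ (which is the paper's implicit reason that the $c_2$-gcds divide $\indd(S)$, and your reason that $\Gamma\subseteq A$) --- but your formulation renders the paper's global-generation/generic-section step unnecessary: once $c_2(E)$ is known to be an integral class in $\CH_0(S)$, the divisibility $\indx(S)\mid\deg c_2(E)$ is automatic. What the paper's version buys in exchange is an explicit \emph{effective} zero-cycle of degree $c_2(E)$, which is the form of the statement used later (e.g.\ in Example~\ref{exam:twelve} and the real case); your version buys a cleaner structural statement about $\CH_0(S)$ and a uniform treatment of the $D^b(S)$ case.
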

\begin{proof}
Consider the `decomposible index'
$$\indd(S):=\gcd \{ D_1\cdot D_2: D_1,D_2 \text{ very ample divisors on } S\}$$
which is equal to
$$\gcd \{ D_1\cdot D_2: D_1,D_2 \text{ divisors on } S\},$$
because for any divisor $D$ the divisor $D+Nh$ is very ample for $N \gg 0$.
All three quantities in the assertion divide $\indd(S)$, so we work modulo this
quantity.

By the analysis of Chern classes above, the second and third quantities agree.  
Given a reduced zero-dimensional
subscheme $Z\subset S$ we have a resolution
$$0 \ra E_{-2} \ra E_{-1} \ra \cO_S \ra \cO_Z \ra 0$$
with $E_{-2}$ and $E_{-1}$ vector bundles.  This implies
that 
$$\gcd \{c_2(E): E \text{ vector bundle on } S\} | \indx(S).$$
Conversely, given a vector bundle $E$ there exists a twist
$E\otimes \cO_S(Nh)$ that is globally generated and
$$c_2(E\otimes \cO_S(Nh))\equiv c_2(E) \pmod{\indd(S)}.$$
Thus there exists a zero-cycle $Z$ with
degree $c_2(E\otimes \cO_S(Nh))$ and
$$\indx(S) | \gcd \{c_2(E): E \text{ vector bundle on } S\}.$$
\end{proof}

\begin{prop}
If $X$ is a K3 surface over a field $F$ then
$$\indx(X) | \gcd \{ 24, D_1\cdot D_2 \text{ where } D_1,D_2 \text{ are divisors on }X \}.$$
\end{prop}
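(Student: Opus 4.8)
The plan is to combine Lemma~\ref{lemm:c2}, which computes $\indx(X)$ as $\gcd\{c_2(E): E \text{ a vector bundle on } X\}$, with two explicit sources of second Chern classes: intersection numbers of divisors and the tangent bundle. The point is that it suffices to exhibit a single bundle whose $c_2$ has degree $24$, together with the bundles already implicit in the decomposable index.

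First, the divisibility $\indx(X)\mid D_1\cdot D_2$ for all divisors $D_1,D_2$ is already contained in the proof of Lemma~\ref{lemm:c2}, where it is observed that all of the relevant quantities divide the decomposable index $\indd(X)=\gcd\{D_1\cdot D_2\}$. Thus it remains only to establish $\indx(X)\mid 24$.

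For the latter I would apply Lemma~\ref{lemm:c2} to the tangent bundle $T_X$, a genuine vector bundle on $X$, whose second Chern class has degree $24$. Indeed, the Riemann--Roch (Noether) formula $\chi(\cO_X)=\tfrac{1}{12}\bigl(c_1(T_X)^2+c_2(T_X)\bigr)$ holds for a smooth projective surface over an arbitrary field; for a K3 surface one has $\chi(\cO_X)=2$, by Serre duality, and $c_1(T_X)=-K_X=0$, whence $\deg c_2(T_X)=24$. Since $24$ then belongs to the set whose gcd computes $\indx(X)$, we obtain $\indx(X)\mid 24$.

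Combining the two divisibilities yields $\indx(X)\mid\gcd\{24,\,D_1\cdot D_2\}$, as claimed. There is no serious obstacle here; the only point requiring care is the degree computation over a non-closed or positive-characteristic base, which is legitimate because Noether's formula is valid for smooth projective surfaces over any field and $\chi(\cO_X)=2$ for any K3 surface. One should also keep in mind the harmless distinction between the Chow class $c_2(T_X)$ and the integer $\deg c_2(T_X)$ entering the gcd in Lemma~\ref{lemm:c2}.
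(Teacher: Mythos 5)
Your proposal is correct and follows exactly the paper's own (one-line) argument: the paper deduces the statement from Lemma~\ref{lemm:c2} together with the fact that $c_2(T_X)=24$. Your additional justification of $c_2(T_X)=24$ via Noether's formula over an arbitrary base field is a reasonable fleshing-out of the same route.
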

This follows from Lemma~\ref{lemm:c2} and
the fact that $c_2(T_X)=24$.  Beauville-Voisin \cite{BV} and Huybrechts \cite{HuyEMS} have
studied the corresponding subgroup of $\CH_0(X_{\bar{F}})$.  

\begin{prop} \label{prop:index}
Let $X$ and $Y$ be derived equivalent K3 surfaces over a field $F$.
Then $\indx(X)=\indx(Y)$.
\end{prop}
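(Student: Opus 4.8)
The plan is to leverage Lemma~\ref{lemm:c2}, which recasts the index as a derived-category invariant: $\indx(X)=\gcd\{c_2(E):E\in D^b(X)\}$ and likewise $\indx(Y)=\gcd\{c_2(F):F\in D^b(Y)\}$, each $c_2$ read as the degree of a zero-cycle. By \cite{Orlov} the equivalence is a Fourier--Mukai transform $\Phi_{\cE}$, so it is in particular a bijection on objects sending $D^b(X)$ onto $D^b(Y)$, accompanied by the compatible Chow-realization $\phi_{\cE}$ with $\ch(\Phi_{\cE}(E))=\phi_{\cE}\ch(E)$. Since gcd's divide in both directions, it suffices to prove the single divisibility $\indx(Y)\mid \deg c_2(E)$ for every $E\in D^b(X)$; applying the same argument to $\Phi_{\cE}^{-1}$ yields the reverse divisibility, hence equality.

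First I would reduce $c_2$ to its additive shadow. As in the proof of Lemma~\ref{lemm:c2}, the non-additive $c_1(E)^2$ contribution consists of intersection numbers of divisors and is therefore absorbed by $\indd(X)$, so modulo $\indd(X)$ the quantity $\deg c_2(E)$ reduces to the additive invariant $-\deg\ch_2(E)$. In Mukai coordinates this is the linear functional given by pairing against a fixed Mukai vector $u$ of square $2$, while the remaining quadratic part is $\tfrac12$ of the Mukai self-pairing, which $\phi_{\cE}$ preserves because it is an isometry of Mukai lattices (Theorem~\ref{theo:LO} together with the $\ell$-adic and Chow compatibility recorded after it). Matching the two decomposable indices $\indd(X)$ and $\indd(Y)$ is then handled by Proposition~\ref{prop:PicBr}, which makes $\Pic(X)$ and $\Pic(Y)$ stably isomorphic as $\Gal(\bar F/F)$-modules and so identifies the gcd of intersection numbers on the two surfaces.

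The main obstacle is that a Fourier--Mukai transform does \emph{not} preserve the Mukai grading: rank, $\ch_2$, and Euler characteristic are individually scrambled by $\phi_{\cE}$ (this is exactly why a structure sheaf can be sent to the ideal sheaf of points, changing $c_2$). One therefore cannot transport $\deg c_2$ object-by-object; the invariance of the index is a statement about the gcd taken over the entire image sublattice $\bar V(X)=\operatorname{im}\bigl(K_0(X)\to \tilde H(\bar X,\bZ_\ell)\bigr)$, which $\phi_{\cE}$ carries isometrically onto $\bar V(Y)$. The delicate point, where I expect the real work to lie, is to show that the rank-mixing contributions and the $c_1^2$ cross-terms produced by $\phi_{\cE}$ are all divisible by $\indx(Y)$, so that they drop out of the gcd; the essential inputs here are the evenness of the Mukai pairing and the divisibility of decomposable intersection numbers by the index. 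A secondary subtlety, absent over $\bar F$, is that over a non-closed field $\bar V(X)$ may be a proper sublattice, so one is not free to realize an arbitrary Mukai vector by an honest complex and must argue entirely within the realized sublattice.
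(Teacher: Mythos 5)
Your starting point matches the paper's: by Lemma~\ref{lemm:c2} the index is the gcd of second Chern classes of objects in the derived category, and you correctly flag that a Fourier--Mukai transform scrambles $c_2$ object-by-object. But the reduction you build on is false, and the step you defer as ``the real work'' is the actual content of the proof. Concretely: $c_2(E)=\ch_1(E)^2/2-\ch_2(E)$, and while $\ch_1(E)^2$ is a decomposable intersection number, its \emph{half} need not be divisible by $\indd(X)$ (on a degree-two K3 surface with $\Pic=\bZ h$ one has $\indd=2$ but $h^2/2=1$). So $c_2(E)\not\equiv-\ch_2(E)\pmod{\indd(X)}$, and $c_2$ is not ``(linear functional)\,$+\,\tfrac12$(Mukai self-pairing)'': writing $v(E)=(r,c_1,s)$ one gets $c_2=\tfrac12(v,v)+rs+(r-s)$, and the leftover $rs$ is neither linear nor controlled by the Mukai form. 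This factor of $2$ is precisely where the paper has to work: it is the reason for the case analysis on the parities of $r$ and $s$ and for the spherical twist $T_{\cS}$ used to replace $v$ by a vector of even rank when $r$ and $s$ are both odd.

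Two further inputs are missing and cannot be supplied by ``evenness of the Mukai pairing'' alone. First, the paper does not try to control $c_2(\Phi^{-1}(F))$ for arbitrary $F\in D^b(Y)$; it only needs $F=\cO_Z$ for $Z\subset Y$ zero-dimensional of length $n$, whose preimage has Mukai vector $nv$ with $v=\phi^{-1}(0,0,1)$, so that $c_2(\Phi^{-1}(\cO_Z))=n(nrs+r-s)$ is completely explicit; your formulation in terms of all of $D^b(X)$ makes the problem strictly harder. Second, the arithmetic that closes the argument is the relation $\left<r,s\right>=\left<1\right>\pmod{g\cdot h}$, which comes from Proposition~\ref{prop:descend}: $Y$ is a \emph{fine} moduli space $M_v(X)$, so there is a second Mukai vector $w$ with $\left(v,w\right)=1$. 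Without that coprimality one cannot show that $nrs+r-s$ is a unit modulo $\indd(X)$, i.e., that the ``rank-mixing contributions'' drop out of the gcd. Your proposal names the right obstacles but does not overcome them; as written it does not constitute a proof.
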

The proof will require the notion of a {\em spherical object} on a K3
surface. This is an object
$\cS \in D^b(X)$ with
$$\Ext^0(\cS,\cS)=\Ext^2(\cS,\cS)=F,\quad \Ext^i(\cS,\cS)=0, \quad i\neq 0,2.$$
These satisfy the following
\begin{itemize}
\item{$\left(v(\cS),v(\cS)\right)=-2$;}
\item{rigid simple vector bundles are spherical;}
\item{each spherical object $\cS$ has the associated
spherical twist \cite[16.2.4]{HuyK3}:
$$T_{\cS}: D^b(X) \ra D^b(X),$$
an autoequivalence such that the induced homomorphism on the Mukai lattice
is the reflection associated with $v(\cS)$;}
\item{each spherical object $\bar{\cS}$ on $X_{\bar{F}}$ is defined
over a finite extension $F'/F$ \cite[5.4]{HuyEMS};}
\item{over $\bC$, each $v=(r,D,s)\in \tilde{H}(X,\bZ) \cap H^{1,1}$ with
$\left(v,v\right)=-2$ arises from a spherical object, which
may be taken to be a rigid vector bundle $E$ if $r>0$ \cite{Kuleshov};}
\item{under the same assumptions, for each polarization $h$ on $X$ there is
a {\em unique} $h$-slope stable vector bundle $E$ with $v(E)=v$ \cite[5.1.iii]{HuyMSRI}.}
\end{itemize}
\begin{proof}
Proposition~\ref{prop:descend} allows us to
express $Y=M_v(X)$ for $v=(r,ah,s)$ where $h$ is a polarization on
$X$ and $a^2h^2=2rs$.  It also yields
a Mukai vector $w=(r',bg,s')\in \tilde{H}(X,\bZ_{\ell})$
with 
$$\left(v,w\right)=ab g\cdot h -r s' - s r'=1.$$ 
Thus we have
\begin{equation} \label{gcdrs}
\left<r,s\right>=\left<1\right> \pmod{g\cdot h}.
\end{equation}

Consider a Fourier-Mukai transform realizing the equivalence
$$\Phi:D^b(X) \ra D^b(Y)$$
and the induced homomorphism $\phi$ on the Mukai lattice.  Note that
$$\phi(v)=(0,0,1)$$ 
reflecting the fact that a point on $Y$ corresponds to a sheaf on $X$
with Mukai vector $v$.

Suppose that $Y$ has a rational point over a field of degree $n$
over $F$; let $Z \subset Y$ denote the corresponding subscheme of
length $n$.  Applying $\Phi^{-1}$ to $\cO_Z$ gives an element of the
derived category with Mukai vector
$(nr,nah,ns)$ and
$$c_2(\Phi^{-1}(\cO_Z))=\frac{c_1(\Phi^{-1}(\cO_Z))^2}{2}-\chi(\Phi^{-1}(\cO_Z))+
2\operatorname{rank}(\Phi^{-1}(\cO_Z))$$
which equals
$n(nrs+r-s).$
Following the proof of Lemma~\ref{lemm:c2}, we compute 
$$
c_2(\Phi^{-1}(\cO_Z)) \pmod{\indd(X)}.
$$
First suppose that $r$ and $s$ have different parity, so that
$$
\gcd(nrs+r-s,2rs)=\gcd(nrs+r-s,rs).
$$
Then using (\ref{gcdrs}) we obtain
\begin{align*}
\left<nrs+r-s,rs\right>=\left<r-s,rs\right>=\left<r-s,r\right>\left<r-s,s\right>\\
=\left<-s,r\right>\left<r,s\right>=\left<r,s\right>^2=\left<1\right> \pmod{g\cdot h}.
\end{align*}
If $r$ and $s$ are both even then $g\cdot h$ must be odd
and 
$$\left<nrs+r-s,2rs\right>=\left<nrs+r-s,rs\right> \pmod{g\cdot h}$$
and repeating the argument above gives the desired conclusion.

Now suppose that $r$ and $s$ are both odd.
It follows that $h^2\equiv 2\pmod{4}$ and we write $h^2=2\gamma-2$
for some even integer $\gamma$. Let $\cS$ denote the spherical object
associated with $h$ so that
$v(\cS)=(1,h,\gamma)$. Applying $T_{\cS}$ to the Mukai vector
$$(r,ah,s) \mapsto (r,ah,s) + \left((r,ah,s),(1,h,\gamma)\right)(1,h,\gamma),$$
we obtain a new vector with rank $r+(ah^2-s-r\gamma)$, which is even.
This reduces us to the previous situation.

In each case, we find
$$
c_2(\Phi^{-1}(\cO_Z))\equiv n \pmod{\indd(X)},
$$
whence $\indx(X)|n$.  Varying over all degrees $n$, we find
$$\indx(X)|\indx(Y)$$
and the Proposition follows.
\end{proof}

The last result raises the question of whether spherical objects are defined
over the ground field:
\begin{ques}
Let $X$ be a K3 surface over a field $F$.
Suppose that $\bar{\cS}$ is a spherical object on $X_{\bar{F}}$ such
that $c_1(\bar{\cS}) \in \Pic(X_{\bar{F}})$ is a divisor defined
over $X$.  When does $\bar{\cS}$ come from an object $\cS$ on $X$?
\end{ques}

Kuleshov \cite{Kuleshov,Kuleshov2} gives a partial description
of how to generate all exceptional bundles on K3 surfaces of Picard rank 
one through `restructuring' operations and `dragons'.  It would be
worthwhile to analyze which of these operations could be
defined over the ground field.

\begin{exam} \label{exam:14}
We give an example of a K3 surface $X$ over a field $F$ with 
$$\Pic(X)=\Pic(X_{\bar{F}})=\bZ h$$
and a rigid sheaf $E$ over $X_{\bar{F}}$ that fails to descend 
to $F$.  

Choose $(X,h)$ to be a degree fourteen K3 surface defined over $\bR$
with $X(\bR)=\emptyset$.  This may be constructed as follows:  
Fix a smooth conic $C$ and quadric threefold $Q$ with
$$C \subset Q \subset \bP^4, \quad Q(\bR)=\emptyset.$$
Let $X'$ denote a complete intersection of $Q$ with a cubic
containing $C$; we have $X'(\bR)=\emptyset$ and $X'$ admits a 
lattice polarization
$$\begin{array}{c|cc}
   & g & C \\
\hline
g   & 6 & 2 \\
C  & 2 & -2
\end{array}.
$$
Write $h=2g-C$ so that $(X',h)$ is a degree $14$ K3 surface
containing a conic.  Let $X$ be a small deformation of $X'$
with $\Pic(X_{\bC})=\bZ h$.  

The K3 surface $X$ is Pfaffian if and only if it admits a vector
bundle $E$ with $v(E)=(2,h,4)$ corresponding to the classifying
morphism $X \ra \Gr(2,6)$.  However, note that
$$c_2(E)=5$$
which would mean that $\indx(X)=1$.  On the other hand,
if $X(\bR)=\emptyset$ then
$\indx(X)=2$.  
\end{exam}

\section{Examples of derived equivalence}
\label{sect:examples}

\subsection{Elliptic fibrations}
The paper \cite{AKW} has a detailed discussion of derived equivalences 
among genus one curves over function fields. 

In this section we work over a field $F$ of characteristic zero.

A K3 surface $X$ is {\em elliptic} if it admits a morphism $X \ra C$
to a curve of genus zero with fibers of genus one.  We allow 
$C=\bP^1$ or a non-split conic over $F$.

\begin{lemm} \label{lemm:whenelliptic}
A K3 surface $X$ is elliptic if and only if it admits a non-trivial
divisor $D$ with $D^2=0$.
\end{lemm}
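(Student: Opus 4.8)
The plan is to prove the two implications separately, the content residing almost entirely in the converse. For the forward direction, suppose $\pi:X\ra C$ is an elliptic fibration. Then for any nonzero divisor $L$ on the genus-zero curve $C$ the pullback $D:=\pi^*L$ is a nonzero divisor on $X$ defined over $F$ with $D^2=0$: since $\pi^*L$ is numerically proportional to the fiber class, its self-intersection vanishes. Even when $C$ is a non-split conic with no $F$-rational point, the hyperplane class of $C$ furnishes such an $L$, so this direction is immediate.

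For the converse I start from a non-trivial $D$ with $D^2=0$ and first arrange effectivity. Riemann--Roch on the K3 surface gives $\chi(\cO_X(D))=\tfrac12 D^2+2=2$, while Serre duality identifies $h^2(D)$ with $h^0(-D)$; hence $h^0(D)+h^0(-D)\geq 2$ and one of $\pm D$ is effective. Replacing $D$ by $-D$ if necessary---which preserves both $D^2=0$ and the property of being defined over $F$---I may assume $D$ is effective, so $D$ lies in the closure of the component of the positive cone containing ample classes. Writing $D=mE$ with $E$ primitive, the class $E$ is again $\Gal(\bar F/F)$-invariant, primitive, isotropic, and in that same closed cone.

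The heart of the argument is to replace $E$ by a nef class without leaving $F$. The Weyl group $W$ generated by reflections in classes of $(-2)$-curves on $X_{\bar F}$ acts on $\Pic(X_{\bar F})$ preserving the positive cone, with the nef cone as a fundamental domain, and this action commutes with $\Gal(\bar F/F)$. The $W$-orbit of $E$ then contains a nef class $E'$, unique by the fundamental-domain property; by uniqueness together with Galois-equivariance, $E'$ is itself Galois-invariant, hence defined over $F$. I expect this descent step---transferring the geometric reduction to nef position into an assertion over the ground field---to be the main obstacle, and the place where one must argue carefully that the $W$-action and the Galois action are compatible, and that a boundary (isotropic) class still admits a unique nef representative.

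It remains to produce the fibration from the primitive nef isotropic class $E'$. Over $\bar F$, such a class on a K3 surface is base-point-free and $|E'|$ defines a morphism to $\bP^1$ whose general fiber is a smooth curve of genus one \cite{HuyK3}; since $F$ has characteristic zero there are no quasi-elliptic degenerations, so this is a genuine elliptic fibration. Descending the $F$-linear system $|E'|$, I obtain $X\ra C$ with $C=\bP(H^0(X,E')^{\vee})$ a genus-zero curve over $F$---either $\bP^1$ or a non-split conic according to whether $|E'|$ has an $F$-rational member---and generic fiber of arithmetic genus one. This exhibits $X$ as elliptic over $F$ and completes the converse.
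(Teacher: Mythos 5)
Your argument follows essentially the same route as the paper's: pass to $\bar F$, use the group generated by reflections in $(-2)$-curves with the nef cone as fundamental domain, extract the unique nef representative of the orbit, and use Galois-equivariance plus uniqueness to see that this representative is Galois-invariant. Your Riemann--Roch step arranging that one of $\pm D$ is effective (hence lies in the closure of the correct component of the positive cone) is a worthwhile point that the paper leaves implicit.

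There is, however, one genuine gap: the inference ``$E'$ is Galois-invariant, hence defined over $F$.'' Galois-invariance of a class in $\Pic(X_{\bar F})$ does not imply that it lies in the image of $\Pic(X)$: by Hochschild--Serre the cokernel of $\Pic(X)\to\Pic(X_{\bar F})^{\Gal(\bar F/F)}$ injects into $\ker\bigl(\Br(F)\to\Br(X)\bigr)$, and this obstruction is nontrivial precisely in the situations this paper is concerned with, e.g.\ K3 surfaces over $\bR$ or over $p$-adic fields without rational points (compare Example~\ref{exam:14}). Your own last sentence exposes the inconsistency: if $E'$ really descended as a line bundle, then $H^0(X,E')$ would be an honest two-dimensional $F$-vector space and $\bP(H^0(X,E')^{\vee})$ would be a split $\bP^1_F$, so the non-split conic allowed in the statement of the lemma could never arise. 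The repair is the one the paper makes: only some positive multiple $kE'$ is guaranteed to be defined over $F$; that multiple is semiample, and the induced morphism maps $X$ onto a genus-zero curve $C$ over $F$ (a form of $\bP^1$, possibly a non-split conic) with fibers of genus one. With that correction the rest of your argument goes through.
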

\begin{proof}
If $X$ is elliptic then the pull back of a non-trivial divisor from $C$
gives a square-zero class; we focus on the converse.

This is well-known if $F$ is algebraically closed \cite[\S 6, Th.~1]{PSSh}.
Indeed, let $\cC_+ \subset H^2(X,\bR)$ denote the component of the
positive cone $\{\eta: \left(\eta,\eta\right)>0 \}$ containing an ample
divisor and $\Gamma \subset \operatorname{Aut}(H^2(X,\bZ))$ 
the group generated by Picard-Lefschetz reflections $\rho_R$
associated with $(-2)$-curves $R \in \Pic(X)$. Then 
the K\"ahler cone
of $X$ is a fundamental domain for the action of $\Gamma$
on $\cC_+$, in the sense that no two elements of the cone are in the 
same orbit and each orbit in $\cC_+$ meets the 
closure of the K\"ahler cone. Cohomology classes in the interior of the cone have
trivial $\Gamma$-stabilizer; classes on walls of the boundary associated
with $(-2)$-curves are stabilized by the associated
reflections.

Thus if $\Pic(X)$ represents zero
then there exists a non-zero divisor $D$ in the closure of the K\"ahler
cone with $D^2=0$, which induces an elliptic fibration $X\ra \bP^1$.
Given a divisor $D \in \cC_+$, we can be a bit more precise about
the $\gamma \in \Gamma$ required to take $D$ to a nef
divisor.  We can write
$$\gamma=\rho_{R_1} \cdots \rho_{R_m}$$
where each $R_j$ is the class of an irreducible rational curve
contained in the fixed part of the linear series $|D|$ 
(cf.~proof in \cite[\S 6]{PSSh}, \cite[\S 2]{SD}).

Now suppose $F$ is not algebraically closed and $D$ is defined over $F$.  
The group $\Gal(\bar{F}/F)$ acts on the $(-2)$-curves on $\bar{X}$
and thus on $\Gamma$ and the orbit $\Gamma\cdot D$.
The Galois action on $R_1,\ldots,R_m$ may be nontrivial. However,
the fundamental domain description guarantees a unique
$f \in \Gamma\cdot D$ in the nef cone of $X$, which is necessarily 
Galois invariant; some multiple of this divisor is defined over $F$.
This semiample divisor induces our elliptic fibration.
\end{proof}

\begin{prop}
Let $X$ be an elliptic K3 surface over $F$ and $Y$ another
K3 surface derived equivalent to $X$ over $F$.
Then $Y$ is elliptic over $F$.
\end{prop}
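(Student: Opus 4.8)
The plan is to translate the elliptic condition into lattice theory via Lemma~\ref{lemm:whenelliptic}, then transport a square-zero class across the derived equivalence. By Lemma~\ref{lemm:whenelliptic}, $X$ being elliptic over $F$ is equivalent to the existence of a nonzero divisor $D \in \Pic(X)$ with $D^2 = 0$; since $X$ and $Y$ are derived equivalent over $F$, I want to produce a corresponding class in $\Pic(Y)$ and invoke the lemma again for $Y$. The natural tool is the Mukai lattice isometry $\phi : \tilde H(\bar X, \bZ_\ell) \stackrel{\sim}{\ra} \tilde H(\bar Y, \bZ_\ell)$ induced by the Fourier--Mukai transform, which by the discussion following Theorem~\ref{theo:LO} is Galois-compatible. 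The subtlety is that $\phi$ does not send $H^2$ to $H^2$: it mixes the $H^0$, $H^2$, and $H^4$ summands, so the image of a square-zero class in $H^2(\bar X)$ is in general a genuinely $(1,1)$ Mukai vector $(r, \delta, s)$ with nonzero rank or degree-four part.

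First I would take the class $[D] \in \Pic(X)$ with $D^2 = 0$ and regard it as the Mukai vector $v_D := (0, D, 0) \in \tilde H(\bar X, \bZ_\ell)$; this is Galois-invariant because $D$ is defined over $F$, and it is isotropic since $(v_D, v_D) = -D^2 = 0$. Applying $\phi$ produces a Galois-invariant, isotropic, algebraic (type $(1,1)$) Mukai vector $w := \phi(v_D) = (r, \delta, s) \in \tilde H(\bar Y, \bZ_\ell)$ with $(w,w) = 0$, i.e. $\delta^2 = 2rs$. The goal is to extract from $w$ an honest square-zero divisor class on $\bar Y$ that is Galois-invariant, which combined with Lemma~\ref{lemm:HKdescend}-style descent (or directly the nef-cone argument inside Lemma~\ref{lemm:whenelliptic}, whose conclusion produces a fibration over $F$ from a Galois-invariant square-zero class) will finish the proof.

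The main obstacle is precisely this last extraction: an isotropic Mukai vector $(r,\delta,s)$ need not itself be a divisor class, so I must either modify the equivalence or manipulate $w$ to land in the $H^2$ summand. The clean route is to compose $\phi$ with suitable autoequivalences whose action on the Mukai lattice is understood. If $r \neq 0$, I would use a spherical twist (for instance the twist $T_{\cO_{\bar Y}}$ associated with the structure sheaf, whose Mukai vector $(1,0,1)$ is spherical) together with tensoring by line bundles to move $w$ into a vector with vanishing rank, and then a further line-bundle twist to kill the $H^4$-component, arriving at a vector of the form $(0, \delta', 0)$ with $(\delta')^2 = 0$. Since all these autoequivalences are defined over $F$ and act Galois-equivariantly on the lattice, the resulting $\delta'$ is again Galois-invariant; by the Torelli-type statement it is represented by an actual divisor class on $\bar Y$. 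I would then check that $\delta' \neq 0$: isotropy forces $\delta'^2 = 0$, and non-triviality follows because $\phi$ is an isometry, so $w$ cannot be the zero vector and the manipulations preserve a nonzero primitive isotropic class up to the action of the Weyl group. Having obtained a nonzero Galois-invariant divisor of square zero on $Y$, Lemma~\ref{lemm:whenelliptic} applies over $F$ and yields an elliptic fibration $Y \ra C$.

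A cleaner alternative, which sidesteps the case analysis on the parity of $(r,s)$, is to argue on the Hodge/Galois sublattice directly: the isometry $\phi$ identifies the algebraic parts of the two Mukai lattices as Galois-modules, and the existence of a nonzero isotropic vector is an intrinsic lattice invariant. An even integral lattice of signature $(1, \rho-1)$ contains a primitive isotropic vector as soon as its rank is at least three (or, in rank two, precisely when it represents zero), and this property is preserved under isometry. Thus the genuine content is to show that possessing a square-zero \emph{divisor} and possessing a square-zero \emph{Mukai vector} are equivalent for K3 surfaces; this is the step where I expect to spend the most care, reconciling the full Mukai lattice $\tilde H$ with the sub-object $H^2$, and it is handled most transparently by the spherical-twist normalization sketched above rather than by a purely abstract lattice argument, since we need the final class to be an effective divisor type and Galois-invariant simultaneously.
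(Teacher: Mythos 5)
There is a genuine gap, and it sits exactly where you predicted you would "spend the most care." Your strategy is to transport the isotropic vector $v_D=(0,D,0)$ through the Mukai-lattice isometry $\phi$ and then normalize the image $w=(r,\delta,s)$ back into the $H^2$-summand by spherical twists and line-bundle twists. That normalization is the entire content of the proposition and it is not available by the operations you list. First, "possessing a square-zero algebraic Mukai vector" is vacuous: $(1,0,0)$ and $(0,0,1)$ are always isotropic and algebraic, so isotropy of $w$ carries no information, and the equivalence you pose at the end (square-zero divisor $\Leftrightarrow$ square-zero Mukai vector) is false as stated. Second, the image can genuinely escape your normalization: if $D$ is the fiber class of the elliptic fibration and $Y$ is the relative Jacobian, then $\phi(0,D,0)=(0,0,1)$, the point class, from which tensoring by line bundles (which fixes $(0,0,1)$) and twisting by $T_{\cO}$ produce nothing; and once you reach a rank-zero vector $(0,\delta',s')$ you cannot in general kill $s'$ by a line-bundle twist, since that requires $s'\in \delta'\cdot\Pic(Y)$. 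Any successful normalization argument would already presuppose that $\Pic(Y)$ contains a square-zero class, which is the conclusion. Your "cleaner alternative" also contains a false lattice statement: an even indefinite lattice of rank $3$ or $4$ need \emph{not} represent zero (e.g.\ the even lattice $\mathrm{diag}(2,-6,-6)$ is anisotropic by descent modulo $3$); Hasse--Minkowski only forces isotropy of indefinite forms in rank $\geq 5$, and in ranks $3$ and $4$ there are local obstructions.

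The paper's proof avoids the Mukai lattice entirely and works with the correct invariant. By Proposition~\ref{prop:PicBr}, derived equivalence gives a stable isomorphism $\Pic(X)\oplus U\simeq \Pic(Y)\oplus U$ of Galois lattices, hence $\Pic(X)$ and $\Pic(Y)$ have the same discriminant and the same $p$-adic invariants. Whether an indefinite lattice represents zero is detected precisely by these data: in rank two by the discriminant being a square, and in rank $\geq 3$ by representing zero over every $\bZ_p$ (local--global for isotropy). Since $\Pic(X)$ represents zero by Lemma~\ref{lemm:whenelliptic}, so does $\Pic(Y)$, and the same lemma (whose Galois-equivariant nef-cone argument you correctly anticipate) produces the elliptic fibration on $Y$ over $F$. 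If you want to salvage your approach, the fix is to replace the attempted vector-by-vector transport with this genus-theoretic comparison of the Picard lattices themselves.
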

\begin{proof}
By Proposition~\ref{prop:PicBr}, the Picard groups of $X$ and
$Y$ are stably isomorphic as lattices
$$\Pic(X) \oplus U \simeq \Pic(Y) \oplus U, \quad 
U=\left( \begin{matrix} 0 & 1 \\
	1 & 0  \end{matrix} \right).$$
In particular, $\Pic(X)$ and $\Pic(Y)$ share the same discriminant
and $p$-adic invariants.
A rank-two indefinite
lattice represents zero if and only if its discriminant is a square.
In higher ranks, an indefinite lattice represents zero if and only if it
represents zero $p$-adically for each $p$.

Thus $\Pic(Y)$ admits a square-zero class and is elliptic by
Lemma~\ref{lemm:whenelliptic}.
\end{proof}

\

An elliptic K3 surface $J\ra C$ is {\em Jacobian} if it admits
a section $C \ra J$.  It has geometric Picard group containing
\begin{equation} \label{eqn:lattice}
\begin{array}{c|cc}
   & f & \Sigma \\
\hline
f  & 0 & 1 \\
\Sigma & 1 & -2
\end{array}
\end{equation}
where $f$ is a fiber and $\Sigma$ is the section.  
Jacobian elliptic surfaces admit numerous autoequivalences \cite[\S 5]{BrCrelle}.
Let $a,b \in \bZ$ with $a>0$ and $(a,b)=1$.  The moduli space of rank $a$ degree $b$
indecomposable vector bundles on fibers of $J\ra C$ is an elliptic fibration
with section, isomorphic to $J$ over $C$.  This reflects Atiyah's classification
of vector bundles over elliptic curves \cite[Th.~7]{Atiyah}.  The associated Fourier-Mukai
transform induces an autoequivalence of $J$ acting on $\tilde{H}(J,\bZ)$ via an element of 
$$ \left( \begin{matrix} c & a \\ d & b\end{matrix} \right) \in \SL_2(\bZ)$$
obtained as in \cite[Th. 3.2,5.3]{BrCrelle}.
This acts on the Mukai vectors 
(or their $\ell$-adic analogues) 
$$(1,0,0), (0,0,1) \in H^0(J,\bZ)(-1) \oplus H^2(J,\bZ) \oplus  H^4(J,\bZ)(1)$$
by the formula
$$(1,0,0) \mapsto  (0,af,c), \quad \quad
(0,0,1) \mapsto  (b,d(f+\Sigma),0).$$
The action is transitive on the $(-2)$-vectors in the
lattice of algebraic classes
\begin{equation} \label{eq:twosummand}
 H^0(J,\bZ)(-1) \oplus  H^4(J,\bZ)(1) \oplus \left< f,\Sigma \right> \simeq  \left( \begin{matrix} 0 & -1 \\
			 -1 & 0 \end{matrix} \right)
\oplus
\left( \begin{matrix} 0 & 1 \\
		      1 & -2 \end{matrix} \right).
\end{equation}
Thus we have established the following:
\begin{prop} \label{prop:autoeq}
Let $J\ra C$ be a Jacobian elliptic K3 surface over $F$.
Then autoequivalences of $J$ defined over $F$ induce a representation
of $\SL_2(\bZ)$ on (\ref{eq:twosummand}) as above.
\end{prop}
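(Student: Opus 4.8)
The plan is to exhibit an explicit generating set of $\SL_2(\bZ)$, realize each generator by an autoequivalence of $J$ defined over $F$, compute the resulting isometry on the summand (\ref{eq:twosummand}) of the Mukai lattice, and then promote this from a list of generators to a genuine homomorphism by checking that composition of autoequivalences matches multiplication of matrices. The heart of the matter is the relative Fourier--Mukai theory for the elliptic fibration $J\ra C$, which I would take from \cite[Th.~3.2, 5.3]{BrCrelle}.

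First I would set up the autoequivalences. For coprime $(a,b)$ with $a>0$, Atiyah's classification \cite{Atiyah} identifies the relative moduli space of rank $a$, degree $b$ indecomposable sheaves along the fibers with $J$ over $C$; because $J\ra C$ and the section $\Sigma$ are defined over $F$, the identification and the associated relative Fourier--Mukai kernel $\cE$ descend to $F$, exactly as the section supplies the rational point needed for the universal sheaf in the proof of Proposition~\ref{prop:descend} (cf.~\cite[Th.~3.16]{LO}). Writing $\Phi_{a,b}=\Phi_{\cE}$ and completing $(a,b)$ to $\left(\begin{smallmatrix} c & a \\ d & b\end{smallmatrix}\right)$, I would record the action on $\tilde{H}(J,\bZ)$ displayed above and observe that requiring $\phi_{a,b}$ to be an isometry on the pair $(1,0,0),(0,0,1)$ forces precisely $cb-ad=1$, i.e.\ $\left(\begin{smallmatrix} c & a \\ d & b\end{smallmatrix}\right)\in\SL_2(\bZ)$. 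In particular $\Phi_{1,0}$ (the relative Jacobian transform) realizes $\left(\begin{smallmatrix} 0 & 1 \\ -1 & 0\end{smallmatrix}\right)$ and $\Phi_{1,1}$ realizes $\left(\begin{smallmatrix} 1 & 1 \\ 0 & 1\end{smallmatrix}\right)$, and these two matrices generate $\SL_2(\bZ)$.

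Next I would upgrade this to the representation $\rho\colon \SL_2(\bZ)\to\Aut$ of the lattice (\ref{eq:twosummand}). The assignment $\Phi\mapsto\phi$ sending an autoequivalence to its induced isometry of $\tilde{H}(J,\bZ)$ is itself a homomorphism from the group of autoequivalences to the orthogonal group, so it is enough to verify that $\Phi_{a',b'}\circ\Phi_{a,b}$ induces the product matrix; this is the composition law for relative Fourier--Mukai transforms on elliptic fibrations \cite[\S 5]{BrCrelle}. One checks directly from the formulas that $\Phi_{1,0}$, $\Phi_{1,1}$, the tensoring functor $\otimes\cO(f)$, and the shift $[1]$ (acting as $-\mathrm{id}$) all preserve (\ref{eq:twosummand}), so $\rho$ is supported there, and the image matrix is recovered from $\phi(1,0,0)$ and $\phi(0,0,1)$, giving faithfulness on this summand.

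The main obstacle is the bookkeeping in the homomorphism step: one must confirm that the relations of $\SL_2(\bZ)$ are satisfied on the nose rather than only up to a shift $[2k]$, which would a priori replace $\SL_2(\bZ)$ by a central extension, and one must ensure that every $\rho(M)$ is realized by an autoequivalence over $F$ and not merely over $\bar F$. Both points are controlled by the section $\Sigma$: it guarantees that each relevant relative moduli space carries a distinguished $F$-rational point, hence a genuine universal sheaf over $F$, so the transforms $\Phi_{a,b}$ and all their composites are defined over $F$ and compose without sign or shift ambiguity. Assembling these verifications yields the asserted $\SL_2(\bZ)$-representation on (\ref{eq:twosummand}).
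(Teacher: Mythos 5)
Your proposal is correct and follows essentially the same route as the paper, which establishes the proposition via the preceding discussion: Atiyah's classification identifies the relative moduli spaces of rank $a$, degree $b$ fiberwise bundles with $J$ over $C$, Bridgeland's relative Fourier--Mukai transforms supply the $\SL_2(\bZ)$ matrices on (\ref{eq:twosummand}), and the section $\Sigma$ (hence the $F$-rational structure on the relevant moduli problems) is exactly what makes these autoequivalences descend to $F$. Your extra bookkeeping about generators, the composition law, and possible shift ambiguities is fine but not load-bearing --- since the representation lives on cohomology, even shifts act trivially and no central extension issue can arise there.
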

The point is that here autoequivalences are defined over the ground field.

Over $\bC$, the availability of autoequivalences has
strong consequences for Jacobian elliptic K3 surfaces.
For instance,
orientation preserving automorphisms of the Mukai lattice
of a complex K3 surface arise from autoequivalences
\cite[Th.~1.6]{HLOY3}. This implies that derived equivalent
Jacobian elliptic K3 surfaces are isomorphic \cite[Cor.~2.7.3]{HLOY}.
The autoequivalences are generated via spherical twists
associated with rigid objects.

As we have seen (e.g., in Example~\ref{exam:14}), rigid objects over 
non-closed fields need not descend. It is natural to ask the following 
question:

\begin{ques} \label{ques:oneJac}
Let $J_1$ and $J_2$ denote Jacobian elliptic K3 surfaces over a field $F$
of characteristic zero.  
If $J_1$ and $J_2$ are derived equivalent does it follow that $J_1 \simeq J_2$?
\end{ques}

If the geometric Picard group has rank two then the isomorphism follows
from Proposition~\ref{prop:autoeq}.

\

We recall the classical Ogg-Shafarevich theory for elliptic
fibrations, following \cite[4.4.1,5.4.5]{CalThe}:
Let $F$ be algebraically closed of characteristic zero and 
$J\ra \bP^1$ a Jacobian elliptic K3 surface.
We may interpret
$$\Sha(J/\bP^1)\simeq \Br(J)$$
and each $\alpha$ in this group may be realized by an 
elliptically fibered K3 surface
$X \ra \bP^1$ with Jacobian fibration $J\ra \bP^1$.  
If $\alpha$ has order $n$ then we have natural
exact sequences
$$0 \ra \bZ/n\bZ \ra \Br(J) \ra \Br(X) \ra 0$$
and
$$0 \ra T(X) \ra T(J) \ra \bZ/n\bZ \ra 0.$$
Note that if $Y=\Pic^c(X/\bP^1)$ then 
$[Y]=c[X] \in \Sha(J/\bP^1)$.  
If $c$ is relatively prime to $n$, so there exists an 
integer $b$ with $bc\equiv 1\pmod{n}$, then we have
$\Pic^b(Y)=X$ as well.  It follows that
$X$ and $Y$ are derived equivalent with the universal
sheaves inducing the Fourier-Mukai transform \cite[Th.~1.2]{BrCrelle}.

\begin{prop} \label{prop:ellip1}
Let $F$ be algebraically closed of characteristic zero.  
Let $\phi:X \ra \bP^1$ be an elliptic K3 surface with Jacobian
fibration $J(X) \ra \bP^1$.  Let $\alpha \in \Br(J(X))$ denote the
Brauer class associated with $[X]$ in the Tate-Shafarevich group of 
$J(X) \ra \bP^1$.  Then $X$ is derived equivalent to the pair
$(J(X),\alpha)$.  
\end{prop}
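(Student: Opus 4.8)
The plan is to realize the equivalence by a relative Fourier--Mukai transform along the elliptic fibration, taking for kernel a \emph{twisted} Poincar\'e sheaf on the fiber product $X\times_{\bP^1} J$, where I abbreviate $J:=J(X)=\Pic^0(X/\bP^1)$, the relative moduli space of degree-zero line bundles on the fibers of $\phi$. Let $p$ and $q$ be the projections of $X\times_{\bP^1} J$ to $X$ and to $J$. Because $\phi$ carries no section, the fibers have no canonical base point, so this relative moduli problem is fine only up to a gerbe: the obstruction to the existence of a universal Poincar\'e sheaf on $X\times_{\bP^1} J$ is a class in $\Br(J)$. The first point to establish is that this obstruction is exactly $\alpha$, the image of $[X]\in\Sha(J/\bP^1)$ under the Ogg--Shafarevich isomorphism $\Sha(J/\bP^1)\simeq\Br(J)$ recalled above.

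Granting this, I would build the twisted Poincar\'e sheaf $\cP$ on $X\times_{\bP^1} J$ as a $q^*\alpha$-twisted sheaf, flat over each factor and restricting on a smooth fiber $X_t\times J_t$ to the ordinary Poincar\'e bundle of the elliptic curve $X_t$ and its dual $J_t$. The associated transform
$$\Phi_{\cP}: D^b(X)\ra D^b(J,\alpha),\qquad \cF\mapsto q_*(\cP\otimes p^*\cF),$$
with derived operations, then lands in $\alpha$-twisted sheaves precisely because $\cP$ carries the twist $q^*\alpha$, matching the target $(J,\alpha)$ of the Proposition. The construction of $\cP$ and the identification of its twist with the Ogg--Shafarevich class follow C\u{a}ld\u{a}raru \cite[4.4.1, 5.4.5]{CalThe}.

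It remains to prove that $\Phi_{\cP}$ is an equivalence. The efficient route is Bridgeland's criterion applied to the spanning class of skyscraper sheaves $\{\cO_x\}_{x\in X}$: one verifies that the objects $\Phi_{\cP}(\cO_x)$ are pairwise orthogonal and have graded endomorphism algebra $\Ext^\bullet(\cO_x,\cO_x)$, which over a smooth fiber is exactly the computation underlying Mukai's self-transform of $D^b(X_t)$. This yields full faithfulness, and since both $X$ and the twisted surface $(J,\alpha)$ are (twisted) Calabi--Yau, full faithfulness upgrades to an equivalence. The main obstacle is the behavior over the singular fibers of $\phi$: over smooth fibers the fiberwise kernel is the usual Poincar\'e bundle, but at the degenerate Kodaira fibers one must work with the compactified relative Jacobian and check that the relative transform stays an equivalence there, which is the technical heart of the Bridgeland--Maciocia and C\u{a}ld\u{a}raru analysis. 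As an independent check that the equivalence respects periods, I would confirm its cohomological shadow against the Ogg--Shafarevich exact sequence
$$0\ra T(X)\ra T(J)\ra\bZ/n\bZ\ra 0,$$
where $n$ is the order of $\alpha$: the surjection here is the functional cut out by $\alpha\in\Br(J)$, so the sequence exhibits $T(X)$ as $\ker(\alpha\colon T(J)\ra\bQ/\bZ)$, i.e.\ as the twisted transcendental lattice $T(J,\alpha)$, which is exactly the Hodge isometry that a derived equivalence $D^b(X)\simeq D^b(J,\alpha)$ must induce.
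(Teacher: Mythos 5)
Your outline is correct and is essentially the paper's own argument: the paper proves this Proposition purely by citation to the proof of C\u{a}ld\u{a}raru's conjecture \cite[1.vi]{HuSt} and to the identification of the universal-sheaf obstruction on $X\times_{\bP^1}\bar{J}(X)$ with the Tate--Shafarevich class of $X$ \cite[4.4.1]{CalThe}, and your sketch---twisted Poincar\'e kernel on the fiber product, Bridgeland's criterion on skyscrapers, consistency with the sequence $0\ra T(X)\ra T(J)\ra\bZ/n\bZ\ra 0$---is exactly the content of those references. The only points to keep straight are that the moduli space must be the \emph{compactified} relative Jacobian so that the kernel and the equivalence extend over the singular Kodaira fibers (as you note, this is the technical heart of the C\u{a}ld\u{a}raru and Bridgeland--Maciocia analysis), and that the obstruction class is pinned down only up to inversion by the general theory, which is harmless here since $D^b(J,\alpha)$ and $D^b(J,\alpha^{-1})$ are equivalent via the twisted dual.
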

This follows from the proof of C\u{a}ld\u{a}raru's conjecture;
see \cite[1.vi]{HuSt}
as well as \cite[4.4.1]{CalThe} for the fundamental identification between the twisting
data and the Tate-Shafarevich group.

\begin{ques}  \label{ques:DEellip}
Let $X$ and $Y$ be K3 surfaces derived equivalent over a field $F$.
Suppose we have an elliptic fibration $X\ra C$ over $F$.
Does $Y$
admit a fibration $Y\ra C$ such that 
$$\begin{array}{ccccc}
X & & \stackrel{\sim}{\ra} & & \Pic^b(Y/C) \\
  & \searrow & & \swarrow & \\
  &          &C&     &  
\end{array}
$$
for some integer $b$?
\end{ques}
By symmetry we have $X\simeq \Pic^b(Y/C)$ and $Y\simeq \Pic^c(X/C)$
for $b,c \in \bZ$. 
As before, if $n=\mathrm{ord}([X])=\mathrm{ord}([Y])$ in the Tate-Shafarevich group
then $bc\equiv 1\pmod{n}$. 

A positive answer to Question~\ref{ques:DEellip} would imply:
\begin{itemize}
\item
 $X$ dominates
$Y$ over $F$ and {\em vice versa}.
\item
If $X$ and $Y$ are elliptic K3 surfaces derived equivalent over a field $F$
of characteristic zero then $X(F) \neq \emptyset$ if and only if $Y(F)\neq \emptyset$.  
\end{itemize}

\subsection{Rank one K3 surfaces}
We recall the general picture:
\begin{prop} \cite[Prop.~1.10]{Oguiso}, \cite{Stellari04}
Let $X/\bC$ be a K3 surface with $\Pic(X)=\bZ h$, where $h^2=2n$.
Then the number $m$ of isomorphism classes of
K3 surfaces $Y$ derived equivalent to $X$ is given by
$$m = 2^{\tau(n)-1}, \quad \text{where} \quad \tau(n)=\text{number of prime factors of n}.$$
\end{prop}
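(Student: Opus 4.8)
The plan is to translate the counting of derived-equivalence (Fourier--Mukai) partners into a purely lattice-theoretic problem and then to carry out an orbit count governed by the discriminant form on $\bZ/2n\bZ$. By Theorem~\ref{theo:orlov}, a K3 surface $Y$ is derived equivalent to $X$ if and only if there is a Hodge isometry $T(X)\simeq T(Y)$. Since $\Pic(X)=\bZ h$ has rank one, $T(X)$ has rank $21$, so any derived-equivalent $Y$ also has Picard rank one; comparing the discriminant forms of $T(Y)\simeq T(X)$ inside the unimodular K3 lattice forces $\Pic(Y)\simeq\langle 2n\rangle$, i.e. every partner again satisfies $(h_Y)^2=2n$. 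First I would fix the abstract weight-two Hodge structure $T:=T(X)$ and, using surjectivity of the period map together with the global Torelli theorem, identify the isomorphism classes of partners $Y$ with the primitive embeddings $T\hookrightarrow \Lambda$ into the K3 lattice $\Lambda=U^{\oplus 3}\oplus E_8(-1)^{\oplus 2}$, taken modulo $O(\Lambda)$ and modulo the Hodge isometries of $T$ (which for Picard rank one are just $\pm 1$).

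Next I would invoke Nikulin's theory of primitive embeddings into a unimodular lattice. Because the orthogonal complement of such an embedding is forced to be $\Pic(Y)\simeq\langle 2n\rangle$, a lattice unique in its genus, the embeddings are classified by the gluing data: the isometries of the discriminant form $q$ of $\langle 2n\rangle$, a finite quadratic form on $\bZ/2n\bZ$, taken up to the images of $O(\langle 2n\rangle)$ and of the Hodge isometries of $T$ inside $O(q)$. Both images equal the central subgroup $\{\pm 1\}$, so the double coset collapses and the number $m$ of partners equals $|O(q)|/2$ whenever $-1\neq 1$ in $O(q)$, i.e. whenever $n>1$ (the case $n=1$ yielding the single partner $X$ itself).

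The final step is the computation $|O(q)|=2^{\tau(n)}$. Writing $2n=2^{a}\prod_i p_i^{e_i}$ with $m'=\prod_i p_i^{e_i}$ odd and using the decomposition $q=q_2\oplus\bigoplus_i q_{p_i}$ afforded by the Chinese Remainder Theorem, one has $O(q)=O(q_2)\times\prod_i O(q_{p_i})$. For each odd prime $p_i$ the isometries of the cyclic form $q_{p_i}$ are exactly $\pm 1$, contributing a factor of $2$; the product over the odd primes gives $2^{t}$, where $t$ is the number of odd primes dividing $n$. The hard part will be the prime $2$: here one must work with the discriminant \emph{quadratic} form, with values in $\bQ/2\bZ$, rather than merely the bilinear form, since the isometry condition on a unit $u$ reads $u^2\equiv 1\pmod{4n}$ and not $u^2\equiv 1\pmod{2n}$. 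A short case analysis on $a=v_2(2n)$ then shows that $O(q_2)$ is trivial when $n$ is odd and of order $2$ when $n$ is even, so that this factor contributes exactly when $2\mid n$. Combining the contributions gives $|O(q)|=2^{\tau(n)}$, whence $m=|O(q)|/2=2^{\tau(n)-1}$.

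I expect the main obstacle to be precisely this $2$-adic analysis, together with the verification that the reduction to $|O(q)|/2$ is legitimate: one needs the surjectivity $O(L)\twoheadrightarrow O(q_L)$ for the indefinite lattices in play, via Nikulin's criterion for lattices of sufficiently large rank, so that $O(\Lambda)$-orbits of embeddings are faithfully recorded by orbits on discriminant forms. Once these two points are secured, the arithmetic count above delivers the stated value of $m$.
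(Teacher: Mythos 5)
The paper does not prove this proposition; it simply cites Oguiso and Stellari, and your argument is essentially the proof given in those references: reduce via Orlov's theorem to counting primitive embeddings of $T(X)$ into the K3 lattice modulo $O(\Lambda)$ and the Hodge isometries $\{\pm 1\}$ of $T(X)$, apply Nikulin's gluing theory to identify this with a double coset in $O(q)$ for $q$ the discriminant form on $\bZ/2n\bZ$, and compute $|O(q)|=\#\{u \bmod 2n : u^2\equiv 1 \pmod{4n}\}=2^{\tau(n)}$. Your count is correct, including the local analysis at $2$ and the degenerate case $n=1$, so the proposal is a faithful reconstruction of the standard proof rather than a new route.
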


\begin{exam} \label{exam:twelve}
The first case where there are multiple isomorphism classes is degree twelve.  Let $(X,h)$ be such a 
K3 surface and $Y=M_{(2,h,3)}(X)$ the moduli space of stable vector bundles $E\ra X$ with
$$\operatorname{rk}(E)=2, \quad c_1(E)=h, \quad \chi(E)=2+3=5,$$
whence $c_2(E)=5$.  Note that if $Y(F)\neq \emptyset$ then $X$ admits an effective zero-cycle of
degree five and therefore a zero-cycle of degree one.  Indeed, if $E\ra X$ is a vector bundle corresponding 
to $[E] \in Y(F)$ then a generic $\sigma \in \Gamma(X,E)$ vanishes at five points on $X$.
As we vary $\sigma$, we get a four-parameter family of such cycles.  Moreover,
the cycle $h^2$ has degree twelve, relatively prime to five.  

Is $X(F)\neq \emptyset$ when $Y(F)\neq \emptyset$?
\end{exam}

\subsection{Rank two K3 surfaces}
Exhibiting pairs of non-isomorphic derived equivalent complex K3 surfaces 
of rank two is a problem on quadratic forms \cite[\S 3]{HLOY}.   
Suppose that $\Pic(X_{\bC})=\Pi_X$ and $\Pic(Y_{\bC})=\Pi_Y$ and
$X$ and $Y$ are derived equivalent.  Orlov's Theorem implies
$T(X)\simeq T(Y)$ which means that $\Pi_X$ and $\Pi_Y$ have isomorphic
discriminant groups/$p$-adic invariants.  Thus we have to 
exhibit $p$-adically equivalent rank-two even indefinite lattices that
are not equivalent over $\bZ$.  
(Proposition~\ref{prop:PicBr} asserts this is a necessary condition
over general fields of characteristic zero.)

\begin{exam}
We are grateful to Sho Tanimoto and Letao Zhang for assistance with this
example.
Consider the lattices
$$\Pi_X=\begin{array}{c|cc}  & C & f \\
			\hline
			 C & 2 &  13 \\
			 f & 13 & 12 \end{array} \quad
\Pi_Y=\begin{array}{c|cc}  & D & g \\
			\hline
			 D & 8 &  15 \\
			 g & 15 & 10 \end{array} 
$$
which both have discriminant $145$. 
Note that $\Pi_X$ represents $-2$
$$(2f-C)^2=(25C-2f)^2=-2$$
but that $\Pi_Y$ fails to represent $-2$.  

Let $X$ be a K3 surface over $F$ with split Picard group $\Pi_X$ over a field $F$.  
We assume that $C$ and $f$ are ample.
The moduli space
$$Y=M_{(2,C+f,10)}(X)$$
has Picard group
$$\begin{array}{c|cc}  & 2C & (C+f)/2 \\
			\hline
			 2C &  8 &  15 \\
			 (C+f)/2 & 15 & 10 \end{array} \simeq \Pi_Y$$
while $M_{(2,D,2)}(Y)$
has Picard group
$$\begin{array}{c|cc}  & D/2 & 2g \\
			\hline
			 D/2 & 2 &  15 \\
			 2g & 15 & 40 \end{array} \simeq \Pi_X$$
and is isomorphic to $X$.

These surfaces have the following properties:
\begin{itemize}
\item{$X$ and $Y$ admit decomposable zero cycles of degree one over $F$;}
\item{$X(F)\neq \emptyset$: the rational points arise from the smooth
rational curves with classes $2f-C$ and $25C-2f$, both of which
admit zero-cycles of odd degree and thus are $\simeq \bP^1$ over $F$;}
\item{$Y(F')$ is dense for some finite extension $F'/F$, due to the
fact that $|\Aut(Y_{\bC})|=\infty$.}
\end{itemize}
We do not know whether
\begin{itemize}
\item{$X(F')$ is dense for any finite extension $F'/F$;}
\item{$Y(F)\neq \emptyset$.}
\end{itemize}
\end{exam}

\section{Finite and real fields}
The $\ell$-adic interpretation of the Fourier-Mukai transform yields 
\begin{theo} \label{theo:finite} \cite{LO} \cite[16.4.3]{HuyK3}
Let $X$ and $Y$ be K3 surfaces derived equivalent over a finite field $F$.
Then for each finite extension $F'/F$ we have
$$|X(F')|=|(Y(F')|.$$
\end{theo}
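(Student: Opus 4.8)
The plan is to count points with the Grothendieck--Lefschetz trace formula and to extract the required equality from the Galois-equivariance of the $\ell$-adic Fourier--Mukai action. Write $q=|F|$ and $F'=\mathbb{F}_{q^n}$, and fix a prime $\ell$ invertible in $F$. For a K3 surface the $\ell$-adic cohomology of $\bar X$ is concentrated in even degrees: $H^0(\bar X,\bQ_\ell)=\bQ_\ell$ carries the trivial action, $H^4(\bar X,\bQ_\ell)=\bQ_\ell(-2)$, and $H^1=H^3=0$. Thus the Lefschetz formula gives
\begin{equation*}
|X(F')|=1+\operatorname{tr}\!\left(\Fr^{n}\mid H^2(\bar X,\bQ_\ell)\right)+q^{2n},
\end{equation*}
and likewise for $Y$. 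The theorem therefore reduces to the single claim that $\operatorname{tr}(\Fr^n\mid H^2(\bar X,\bQ_\ell))=\operatorname{tr}(\Fr^n\mid H^2(\bar Y,\bQ_\ell))$ for every $n$.

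To prove this I would pass to the Mukai lattices. A derived equivalence over $F$ is, by Orlov's theorem, realized by a Fourier--Mukai kernel $\cE\in D^b(X\times Y)$ defined over $F$; its cohomological realization $\phi=\phi_{\cE}$ is built from $\ch(\cE)$ and $\sqrt{\Td_{X\times Y}}$, hence is itself defined over $F$ and induces an isomorphism
\begin{equation*}
\phi:\tilde H(\bar X,\bQ_\ell)\stackrel{\sim}{\ra}\tilde H(\bar Y,\bQ_\ell)
\end{equation*}
commuting with the action of $\Gal(\bar F/F)$, as recorded after Theorem~\ref{theo:LO}. In particular $\phi$ intertwines the two geometric Frobenius operators, so the traces of $\Fr^n$ on the two full Mukai lattices coincide.

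It remains to compare these total traces with the $H^2$-traces. The grading $\tilde H=H^0(-1)\oplus H^2\oplus H^4(1)$ is Galois-stable on each side---Frobenius acts block-diagonally with respect to it, even though $\phi$ itself need not respect the grading---so the trace of $\Fr^n$ on $\tilde H(\bar X,\bQ_\ell)$ is the sum of the traces on the three summands. After the Tate twists the outer summands are pure of weight two: $\Fr$ acts on both $H^0(-1)$ and $H^4(1)$ by multiplication by $q$, so each contributes $q^n$, giving $\operatorname{tr}(\Fr^n\mid \tilde H(\bar X,\bQ_\ell))=2q^n+\operatorname{tr}(\Fr^n\mid H^2(\bar X,\bQ_\ell))$, and identically for $Y$. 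Equating the total traces and cancelling the common $2q^n$ yields the desired equality of $H^2$-traces; substituting back into the Lefschetz formula gives $|X(F')|=|Y(F')|$.

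The essential point---and the step I expect to require the most care---is the Galois-equivariance of $\phi$ over the finite field: one must check that the kernel $\cE$ may be taken over $F$ rather than merely over $\bar F$, and that the cycle-theoretic construction of $\phi_{\cE}$ is compatible with Frobenius, so that the comparison isomorphism genuinely intertwines the arithmetic Galois actions and not only the geometric lattice structures. Granting this (it is exactly the $\ell$-adic compatibility invoked after Theorem~\ref{theo:LO} and developed in \cite[\S 2]{LO}), the remaining steps are formal consequences of the weight decomposition of the Mukai lattice.
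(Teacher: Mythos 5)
Your argument is correct and is exactly the intended one: the paper states this theorem with only the remark that ``the $\ell$-adic interpretation of the Fourier--Mukai transform yields'' it, deferring to \cite{LO} and \cite[16.4.3]{HuyK3}, and those sources prove it precisely by combining the Galois-equivariant isomorphism of $\ell$-adic Mukai lattices with the Lefschetz trace formula and the vanishing of odd cohomology. Your observation that Frobenius is block-diagonal on the grading (so the twisted outer summands contribute $2q^n$ on each side and cancel) is the right way to pass from the total trace to the $H^2$-trace.
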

For the case of general surfaces see \cite{Honigs}.

\

We have a similarly complete picture over the real numbers.
We review results of Nikulin \cite[\S 3]{Nik79} \cite[\S 2]{Nik08} on
real K3 surfaces.  

Let $X$ be a K3 surface over $\bR$,
$X_{\bC}$ the corresponding complex K3 surface, 
and $\varphi$ the action of the anti-holomorphic involution (complex conjugation)
of $X_{\bC}$ on $H^2(X_{\bC},\bZ)$.  
Let $\Lambda_{\pm}\subset H^2(X_{\bC},\bZ)$
denote the eigenlattices where $\varphi$ acts via $\pm 1$.  
If $D$ is a divisor on $X$ defined over $\bR$ then
$$\varphi([D])=-D;$$ 
the sign reflects the fact that complex conjugation reverses the sign
of $(1,1)$ forms.  In Galois-theoretic terms, the cycle class of a divisor
lives naturally $H^2(X_{\bC},\bZ(1))$ and twisting by $-1$ accounts
for the sign change.  
Let $\tilde{\Lambda}_{\pm}$ denote the
eigenlattices of the Mukai lattice; note that $\tilde{\Lambda}_-$
contains the degree zero and four summands.  Again, the sign
change reflects the fact that these are twisted in the Mukai 
lattice.

We introduce the key invariants:  Let $\bfr$ denote the
rank of $\Lambda_{-}$.  The discriminant groups of
$\Lambda_{\pm}$ are two-elementary groups of order $2^a$
where $a$ is a non-negative integer.  Note that $\tilde{\Lambda}_{\pm}$
have discriminant groups of the same order.  Finally, we
set
$$\delta_{\varphi}= \begin{cases} 0 \text{ if } \left(\lambda,\varphi(\lambda)\right)
				\equiv 0(\mathrm{mod}\ 2) \text{ for each }\lambda \in
					\Lambda \\
				1 \text{ otherwise. }
		\end{cases}$$
Note that $\delta_{\varphi}$ can be computed via the Mukai lattice
$$\delta_{\varphi}=  0 \text{ iff } \left(\lambda,\varphi(\lambda)\right)
				\equiv 0(\mathrm{mod}\ 2)  \text{ for each }\lambda \in
					\tilde{\Lambda},
$$
as the degree zero and four summands always give even intersections.

We observe the following:
\begin{prop}
Let $X$ and $Y$ be K3 surfaces over $\bR$, derived equivalent over $\bR$.
Then 
$$(\bfr(X),a(X),\delta_{\varphi,X})=
(\bfr(Y),a(Y),\delta_{\varphi,Y}).$$
\end{prop}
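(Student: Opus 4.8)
The plan is to reduce the whole statement to a single object: a $\bZ$-linear isometry of Mukai lattices
$$\phi: \tilde{H}(X_{\bC},\bZ) \stackrel{\sim}{\ra} \tilde{H}(Y_{\bC},\bZ)$$
that intertwines the two complex-conjugation involutions, i.e.\ $\phi\circ\varphi_X=\varphi_Y\circ\phi$. First I would produce this $\phi$. After base change to $\bC$, Orlov's Theorem~\ref{theo:orlov} yields an integral Hodge isometry of Mukai lattices; moreover the equivalence over $\bR$ is the Fourier--Mukai transform $\Phi_{\cE}$ attached to a kernel $\cE$ on $X\times Y$ defined over $\bR$, and the induced map is $\phi=\phi_{\cE}$, given by the integral Chern-character formula of \S1.2. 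Since $\cE$ is defined over $\bR$, that formula is unchanged under complex conjugation, so $\phi_{\cE}$ commutes with the $\varphi$'s. This is precisely the real-field instance of the Galois-compatibility recorded after Theorem~\ref{theo:LO}, with $\Gal(\bC/\bR)=\langle\varphi\rangle$.

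Next, the equivariance $\phi\varphi_X=\varphi_Y\phi$ forces $\phi$ to carry eigenlattices to eigenlattices: if $\varphi_X\lambda=\pm\lambda$ then $\varphi_Y(\phi\lambda)=\phi(\varphi_X\lambda)=\pm\phi\lambda$, so $\phi$ restricts to isometries $\tilde{\Lambda}_{\pm}(X)\stackrel{\sim}{\ra}\tilde{\Lambda}_{\pm}(Y)$ of the Mukai eigenlattices. It then remains to read the three invariants off these sublattices, using the reductions to the Mukai lattice already recorded in the excerpt.

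For $\bfr$: as noted above, $\tilde{\Lambda}_-$ consists of $\Lambda_-$ together with the degree-zero and degree-four summands, so $\operatorname{rank}\tilde{\Lambda}_-=\bfr+2$; an isometry preserves rank, whence $\bfr(X)=\bfr(Y)$. For $a$: the rank-two summand $H^0(X_{\bC},\bZ)(-1)\oplus H^4(X_{\bC},\bZ)(1)$ carries the unimodular Mukai pairing $\left(\begin{smallmatrix}0&-1\\-1&0\end{smallmatrix}\right)$, so $\tilde{\Lambda}_{\pm}$ and $\Lambda_{\pm}$ have isomorphic discriminant groups; since $\phi$ matches $\tilde{\Lambda}_{\pm}(X)$ with $\tilde{\Lambda}_{\pm}(Y)$ isometrically, the orders $2^{a}$ of these discriminant groups agree, giving $a(X)=a(Y)$. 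For $\delta_\varphi$: because $\phi$ is an isometry with $\varphi_Y\phi=\phi\varphi_X$, we have $\left(\phi\lambda,\varphi_Y\phi\lambda\right)=\left(\lambda,\varphi_X\lambda\right)$ for every $\lambda$, and $\phi$ is a bijection on the full Mukai lattice, so the two sets of values $\left(\lambda,\varphi\lambda\right)$ coincide; in particular they share the same parities, and the Mukai-lattice characterization of $\delta_\varphi$ given above yields $\delta_{\varphi,X}=\delta_{\varphi,Y}$.

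The one genuine point requiring care is the first step---upgrading the $\ell$-adic and de Rham Galois-compatibility to an honest integral, conjugation-equivariant isometry of Betti Mukai lattices. Granting that, the remaining steps are bookkeeping about eigenlattices and Tate twists; the only thing to verify is that the degree-zero and degree-four summands lie in the $(-1)$-eigenspace of $\varphi$ (as the sign convention of \S1.2 and the computation $\varphi([D])=-[D]$ dictate) and contribute a unimodular block, which is exactly what makes the passage between $\Lambda_{\pm}$ and $\tilde{\Lambda}_{\pm}$ transparent.
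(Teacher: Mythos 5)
Your proposal is correct and follows the same route as the paper: the paper's proof simply notes that the derived equivalence over $\bR$ induces a conjugation-equivariant isometry $\tilde{H}(X_{\bC},\bZ)\simeq\tilde{H}(Y_{\bC},\bZ)$ and that $(\bfr,a,\delta_{\varphi})$ can be read off from the Mukai lattice, relying on the preceding discussion (that $\tilde{\Lambda}_-$ contains the unimodular degree-zero and degree-four summands, that $\tilde{\Lambda}_{\pm}$ and $\Lambda_{\pm}$ have discriminant groups of the same order, and that $\delta_{\varphi}$ admits a Mukai-lattice characterization). You have merely written out explicitly the bookkeeping the paper leaves implicit.
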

\begin{proof}
The derived equivalence induces an isomorphism
$$\tilde{H}(X_{\bC},\bZ)\simeq \tilde{H}(Y_{\bC},\bZ)$$
compatible with the conjugation actions.
Since $(\bfr,a,\delta_{\varphi})$ can be read off from the Mukai lattice,
the equality follows.
\end{proof}

The topological type of a real K3 surface is governed by
these invariants.
Let $T_g$ denote a compact orientable surface of genus $g$.
\begin{prop} \cite[Th.~3.10.6]{Nik79} \cite[2.2]{Nik08}
Let $X$ be a real K3 surface with invariants $(\bfr,a,\delta_{\varphi})$.
Then the manifold $X(\bR)$ is orientable and 
$$ X(\bR) =
\begin{cases}
\emptyset & \text{ if } (\bfr,a,\delta_{\varphi})=(10,10,0) \\
T_1 \sqcup T_1  & \text{ if } (\bfr,a,\delta_{\varphi})=(10,8,0) \\
T_g \sqcup (T_0)^k & \text{ otherwise, where } \\
                   & \quad \quad  g=(22-\bfr-a)/2, k=(\bfr-a)/2.
\end{cases}
$$
\end{prop}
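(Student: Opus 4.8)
The plan is to recover the homeomorphism type of the real locus $X(\bR)$, the fixed set of complex conjugation $\sigma$ on $X_{\bC}$, from the three numerical invariants $(\mathbf{r},a,\delta_{\varphi})$. First I would record that $X(\bR)$ is a closed (possibly empty) \emph{orientable} surface: triviality of the canonical bundle furnishes a nowhere-vanishing holomorphic two-form $\Omega$ on $X_{\bC}$, which may be normalized so that $\sigma^*\Omega=\bar{\Omega}$, and then a suitable real combination of $\Omega$ restricts to a nowhere-vanishing two-form on $X(\bR)$. Consequently every connected component is a compact orientable surface $T_{g_i}$, and it remains only to determine the number of components and the multiset of genera $\{g_i\}$.

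Two global constraints pin down $\sum_i(1-g_i)$ and $\sum_i(1+g_i)$. The first is the Lefschetz fixed point formula: since $\sigma$ is an involution whose fixed locus is exactly $X(\bR)$ and whose normal action has no $+1$ eigenvalue, one has $\chi(X(\bR))=\sum_i(-1)^i\operatorname{tr}(\varphi\mid H^i(X_{\bC},\bQ))$. As $\varphi$ acts trivially on $H^0$ and $H^4$ and with eigenvalue multiplicities governed by $\operatorname{rank}\Lambda_{\pm}$ on $H^2$, this evaluates to a linear expression in $\mathbf{r}$; matching it against $\chi(T_g\sqcup(T_0)^k)=2-2g+2k$ yields the first relation. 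The second constraint is Smith theory: the $\bZ/2$-equivariant (Smith) exact sequence gives the Smith--Thom inequality $\sum_i\dim_{\mathbb{F}_2}H^i(X(\bR),\mathbb{F}_2)\le\sum_i\dim_{\mathbb{F}_2}H^i(X_{\bC},\mathbb{F}_2)=24$, and the even defect is exactly $2a$, where $2^a$ is the order of the two-elementary discriminant group of $\Lambda_{\pm}$. This identification of the defect with $a$ is the lattice-theoretic heart of the matter, and is where I expect to invoke Nikulin's analysis of involutions on the K3 lattice. Together these give $\sum_i(1+g_i)=12-a$, and solving the two relations produces the component count $k+1$ and the total genus $\sum_i g_i=g$.

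It then remains to show that the genus is \emph{concentrated}: that $X(\bR)$ consists of a single component of genus $g$ together with $k$ spheres, rather than distributing $\sum_i g_i=g$ among several components. This is the main obstacle, and I do not expect it to follow from the numerical identities alone, since distinct distributions of genus share the same Euler characteristic and total $\mathbb{F}_2$-Betti number---indeed this is precisely what the exceptional triples $(10,10,0)$ and $(10,8,0)$ exhibit, where the naive prediction $T_1$ and $T_2\sqcup T_0$ is replaced by $\emptyset$ and $T_1\sqcup T_1$. The clean route is deformation invariance: the topological type of $X(\bR)$ is constant in a connected family of real K3 surfaces by Ehresmann's theorem, and by Nikulin's classification the triple $(\mathbf{r},a,\delta_{\varphi})$ is a complete invariant of the deformation class of the real structure. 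One then exhibits an explicit representative for each admissible triple---for instance a real double cover of $\bP^2$ branched along a real sextic, or a real elliptic or Kummer K3---whose real locus is computed directly, and checks that the concentration of genus and the two exceptional configurations occur exactly as stated. Verifying that every admissible triple is realized, and that $\delta_{\varphi}$ is the invariant separating the exceptional cases from the generic ones, is the portion of the argument that genuinely requires Nikulin's work rather than the formal input above.
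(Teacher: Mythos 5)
This proposition is quoted from Nikulin with no proof supplied in the paper, so there is no in-text argument to measure your sketch against; what can be said is that your outline reproduces the architecture of the actual proof in the literature. Orientability from a real holomorphic two-form, the Euler-characteristic relation from the Lefschetz fixed point formula, the total $\mathbb{F}_2$-Betti number from Smith theory with defect $2a$, and the reduction of the ``genus concentration'' problem to deformation invariance plus explicit models (double planes branched in real sextics, real elliptic and Kummer surfaces) is exactly how Nikulin and Kharlamov proceed, and you are right that the two numerical identities alone cannot distinguish the exceptional triples $(10,10,0)$ and $(10,8,0)$ from their naive predictions. You have also correctly isolated the two steps that carry all the weight and that your sketch does not supply: the identification of the Smith defect with the invariant $a$ of the two-elementary discriminant group, and the theorem that $(\bfr,a,\delta_{\varphi})$ is a complete invariant of the deformation class of the real structure, which rests on the global Torelli theorem, surjectivity of the period map, connectedness of the relevant period space, and Nikulin's classification of involutions of the K3 lattice. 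One concrete caution if you carry out the Lefschetz step: with the conventions as set up in this section ($\varphi$ the honest conjugation action on $H^2$, so that real divisor classes lie in $\Lambda_-$), the Lefschetz number is $2+\operatorname{rk}\Lambda_+-\operatorname{rk}\Lambda_-$, whereas matching against $\chi\bigl(T_g\sqcup (T_0)^k\bigr)=2-2g+2k=2\bfr-20$ forces $\bfr=\operatorname{rk}\Lambda_+$ rather than $\operatorname{rk}\Lambda_-$ (the empty case must have Lefschetz number zero, hence a rank-$10$ $+1$-eigenlattice, which is where $U(2)\oplus E_8(2)$ of signature $(1,9)$ lives). You will need to fix the eigenlattice convention consistently---equivalently, work with $-\varphi$---before the bookkeeping closes up.
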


\begin{coro}
Let $X$ and $Y$ be K3 surfaces defined
and derived equivalent over $\bR$.  
Then $X(\bR)$ and $Y(\bR)$ are diffeomorphic.  
In particular, $X(\bR)\neq \emptyset$ if and only if $Y(\bR)\neq \emptyset$.
\end{coro}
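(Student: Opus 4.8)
The plan is to combine the two preceding propositions directly, since the corollary is a formal consequence of them. First I would invoke the Proposition immediately above: because $X$ and $Y$ are derived equivalent over $\bR$, the Fourier--Mukai transform induces an isometry $\tilde{H}(X_{\bC},\bZ)\simeq \tilde{H}(Y_{\bC},\bZ)$ compatible with the complex-conjugation involutions $\varphi$. As each of the three Nikulin invariants $(\bfr,a,\delta_{\varphi})$ is read off from the Mukai lattice together with its conjugation action, this isometry forces
$$(\bfr(X),a(X),\delta_{\varphi,X})=(\bfr(Y),a(Y),\delta_{\varphi,Y}).$$

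Next I would apply Nikulin's topological classification (the Proposition citing \cite[Th.~3.10.6]{Nik79}, \cite[2.2]{Nik08}), which expresses the diffeomorphism type of the real locus $X(\bR)$ as an explicit function of the triple $(\bfr,a,\delta_{\varphi})$ alone: either $\emptyset$, or $T_1\sqcup T_1$, or $T_g\sqcup (T_0)^k$ with $g$ and $k$ determined by $\bfr$ and $a$. Since $X$ and $Y$ share the same triple, they land in the same case of this trichotomy and hence have diffeomorphic real loci. The final sentence is then immediate, as the empty manifold forms its own diffeomorphism class; equivalently, $X(\bR)=\emptyset$ corresponds to the single value $(\bfr,a,\delta_{\varphi})=(10,10,0)$, so $X(\bR)=\emptyset$ if and only if $Y(\bR)=\emptyset$.

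I expect no substantive obstacle at this stage: all the content has been front-loaded into the two propositions, namely the conjugation-compatibility of the derived equivalence on cohomology and the completeness of Nikulin's classification (that the three invariants suffice to pin down the diffeomorphism type). The only point meriting a line of justification is why the invariants are genuinely lattice-theoretic, i.e. why $\delta_{\varphi}$ computed on $H^2$ agrees with the value computed on the full Mukai lattice. But this was already recorded in the setup, since the degree-zero and degree-four summands contribute only even intersections and therefore do not affect $\delta_{\varphi}$.
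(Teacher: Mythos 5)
Your argument is exactly the one the paper intends: the corollary is stated as an immediate consequence of the two preceding propositions (equality of the Nikulin triple $(\bfr,a,\delta_{\varphi})$ under derived equivalence, plus Nikulin's classification of the diffeomorphism type of $X(\bR)$ in terms of that triple), and you have assembled them correctly, including the observation that $\delta_{\varphi}$ is computable from the Mukai lattice. The only thing you do not mention is the paper's remark that the final statement ($X(\bR)\neq\emptyset$ iff $Y(\bR)\neq\emptyset$) also follows independently from Proposition~\ref{prop:index}, since a real variety has a real point if and only if its index is one; but that is an alternative, not a gap.
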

The last statement also follows from Proposition~\ref{prop:index}:
A variety over $\bR$ has a real point if and only if its index is one.  
(This was pointed out to us by Colliot-Th\'el\`ene.)

\begin{exam}
Let $X$ and $Y$ be derived equivalent K3 surfaces, defined over $\bR$;
assume they have Picard rank one.  
Then
$Y=M_v(X)$ for some isotropic Mukai vector $v=(r,f,s)\in \tilde{H}(X(\bC),\bZ)$
with $(r,s)=1$.  For a vector bundle $E$ of this type note that
$$c_2(E)=c_1(E)^2/2+r\chi(\cO_X)-\chi(E)=rs+r-s,$$
which is odd as $r$ and $s$ are not both even.  Then a global section of $E$
gives an odd-degree cycle on $X$ over $\bR$, hence an $\bR$-point.
\end{exam}

\

\section{Geometric case: local fields with complex residue field}
We start with a general definition: Let $R$ be a discrete
valuation ring with quotient field $F$.
A K3 surface $X$ over $F$ has
{\em good reduction} if there exists a smooth proper algebraic space
$\cX \ra \Spec(R)$ with generic fiber $X$.  
It has {\em ADE reduction} if the central fiber has (at worst) rational
double points.  

Let $X$ be a projective K3 surface over $F=\bC((t))$.
Consider the monodromy action
$$T:H^2(X,\bZ) \ra H^2(X,\bZ)$$
associated with a loop about $t=0$.
This is {\em quasi-unipotent}, i.e., there exist
$e,f \in \bN$ such that
$(T^e-I)^f=0$ \cite{Landman}; we choose $e,f$ minimal with this property.

Let $\Delta=\Spec \bC[[t]]$ and fix a projective  model
$$\cX \ra \Delta$$
and a resolution
$$\varpi:\cX' \ra \Delta$$
such that the central fiber $\varpi^{-1}(0)$ is a normal crossings divisor, perhaps
with multiplicities along some components.  Let $\cX'_{\circ} \subset \varpi^{-1}(0)$
denote the smooth locus of the central fiber, i.e., the points of mulitiplicity one.
A'Campo \cite[Th.~1]{AC} proved that
$$\chi(\cX'_{\circ})=2+\operatorname{trace}(T).$$
This may be interpreted as the alternating sum
of the traces of the monodromy matrices  on {\em all} the cohomology groups of $X$.

An application of Hensel's Lemma yields
\begin{prop}  \label{prop:ACampo}
If
$\operatorname{trace}(T)\neq -2$ then $\cX \ra \Delta$ admits
a section, i.e., $X(F)\neq \emptyset$.
\end{prop}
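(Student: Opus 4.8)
The plan is to translate the numerical hypothesis, via A'Campo's formula, into the nonemptiness of the smooth multiplicity-one locus $\cX'_{\circ}$ of the central fiber, and then to manufacture an honest section of $\varpi$ by lifting a closed point of $\cX'_{\circ}$ through the (smooth) morphism $\varpi$.

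First I would rewrite the hypothesis geometrically. Since the cohomology of a K3 surface is concentrated in degrees $0,2,4$ with trivial monodromy on $H^0$ and $H^4$, the alternating sum of traces of monodromy is $2+\operatorname{trace}(T)$, so A'Campo's theorem reads $\chi(\cX'_{\circ})=2+\operatorname{trace}(T)$. Thus the assumption $\operatorname{trace}(T)\neq -2$ is exactly the statement $\chi(\cX'_{\circ})\neq 0$. As the empty scheme has Euler characteristic zero, this forces $\cX'_{\circ}\neq\emptyset$; being a nonempty variety over the algebraically closed field $\bC$, it carries a closed point $x$ with residue field $\bC$.

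Next I would exploit precisely what \emph{multiplicity one} buys. By construction $x$ lies on a single component $D_i$ of the normal crossings central fiber $\varpi^{-1}(0)$, and that component occurs with multiplicity one, so a local equation for $D_i$ is also a local equation for the pullback $\varpi^*t$. Hence $\cX'$ is smooth at $x$ and $\varpi^*t$ is part of a regular system of parameters, which means $\varpi$ is smooth at $x$ of relative dimension two. Finally I would lift: writing $R=\bC[[t]]$, the point $x$ is an $R/\mathfrak{m}$-point of $\cX'$ over $\Delta=\Spec R$, and since $\varpi$ is smooth at $x$ while $R$ is complete (hence Henselian) local, the infinitesimal lifting criterion produces compatible lifts over each $R/t^{n+1}$, which completeness of $R$ assembles into a section $s\colon\Delta\to\cX'$. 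This is the advertised application of Hensel's Lemma. Because the resolution $\cX'\to\cX$ is an isomorphism over the generic point of $\Delta$, restricting $s$ to the generic fiber gives a point of $X(F)$, and by the valuative criterion of properness this extends to a section of the proper model $\cX\to\Delta$.

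The one genuine subtlety—and the step I expect to be the main obstacle to state cleanly—is matching the multiplicity-one condition in the definition of $\cX'_{\circ}$ to smoothness of the \emph{morphism} $\varpi$, rather than mere smoothness of the fiber: it is the reducedness of the component through $x$ that guarantees $\varpi^*t$ generates the maximal ideal in the normal direction, and hence that the lifting criterion applies at all. Once smoothness of $\varpi$ at a $\bC$-point is secured, the Hensel/completeness step and the passage from a section of $\cX'$ to an $F$-point of $X$ are both routine.
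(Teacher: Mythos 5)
Your proposal is correct and is exactly the argument the paper intends: the paper compresses it to ``an application of Hensel's Lemma'' following A'Campo's formula $\chi(\cX'_{\circ})=2+\operatorname{trace}(T)$, and you have simply filled in the standard details (nonemptiness of $\cX'_{\circ}$, smoothness of $\varpi$ at a multiplicity-one point, lifting over the complete DVR, and passing back to the proper model $\cX$). No discrepancies.
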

This result was previously obtained by Nicaise \cite[Cor.~6.6]{Nicaise};
his techniques are also applicable in mixed characteristic under appropriate
tameness assumptions.

Proposition~\ref{prop:ACampo} applies when $T$ is unipotent ($e=1$).
This is the case when there exists a resolution
$\cX'\ra \Delta$ with central fiber {\em reduced} normal crossings.
(See the Appendix and Theorem~\ref{theo:Kulikov} for further analysis.)  

If $X$ and $Y$ are derived equivalent over $F$ then Orlov's Theorem
and the discussion preceding Proposition~\ref{prop:descend} implies that
their Mukai lattices admit a monodromy equivariant isomorphism.
Thus the characteristic polynomials of their monodromy matrices are equal.
\begin{coro}
Suppose that $X$ and $Y$ are derived equivalent K3 surfaces
with monodromy satisfying
$\operatorname{trace}(T)\neq -2$.  
Then both $X(F)$ and $Y(F)$ are nonempty.
\end{coro}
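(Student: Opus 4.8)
The plan is to deduce the corollary from Proposition~\ref{prop:ACampo} applied separately to $X$ and to $Y$, the only real work being to transport the hypothesis $\operatorname{trace}(T)\neq -2$ across the derived equivalence. First I would record, as in the discussion preceding the corollary, that a derived equivalence over $F=\bC((t))$ produces (via Orlov's Theorem~\ref{theo:orlov}) a Fourier-Mukai kernel whose induced isomorphism $\tilde{H}(\bar{X},\bZ_{\ell})\stackrel{\sim}{\ra}\tilde{H}(\bar{Y},\bZ_{\ell})$ is equivariant for $\Gal(\bar{F}/F)$, hence for the monodromy operator generating this (procyclic) group. Conjugate operators share a characteristic polynomial, so the monodromy matrices on the two Mukai lattices have equal characteristic polynomials and, in particular, equal trace.

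Next I would isolate the $H^2$-contribution to that trace. Since $\tilde{H}=H^0(X,\bZ)(-1)\oplus H^2(X,\bZ)\oplus H^4(X,\bZ)(1)$ and the monodromy fixes the rank-one summands $H^0$ and $H^4$---these are spanned by the monodromy-invariant fundamental and point classes, and the Tate twists do not alter traces---the trace of monodromy on $\tilde{H}$ equals $2+\operatorname{trace}(T)$, where $T$ is the monodromy on $H^2$. This is exactly the quantity entering A'Campo's formula $\chi(\cX'_{\circ})=2+\operatorname{trace}(T)$. Comparing the two surfaces and using the trace equality from the first step gives $\operatorname{trace}(T_X)=\operatorname{trace}(T_Y)$.

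With the traces matched the conclusion is immediate. The hypothesis $\operatorname{trace}(T)\neq -2$ now holds for both surfaces at once. Choosing projective models $\cX\ra\Delta$ and $\cY\ra\Delta$, which exist as both are projective K3 surfaces over $\bC((t))$, Proposition~\ref{prop:ACampo} applies to each and yields $X(F)\neq\emptyset$ and $Y(F)\neq\emptyset$.

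The genuinely substantive ingredient is external: A'Campo's theorem together with the Hensel-type argument packaged in Proposition~\ref{prop:ACampo}. Granting that, the corollary is a transport-of-structure argument, and the only step needing care is the bookkeeping in the second paragraph---checking that equality of traces on the rank-$24$ Mukai lattice forces equality of traces on the rank-$22$ lattice $H^2$, which rests entirely on the monodromy acting trivially on the degree-zero and degree-four summands.
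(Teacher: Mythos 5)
Your proposal is correct and follows the paper's own route: the paper likewise notes (in the paragraph immediately preceding the corollary) that a derived equivalence yields a monodromy-equivariant isomorphism of Mukai lattices, hence equal characteristic polynomials of the monodromy matrices, and then invokes Proposition~\ref{prop:ACampo} for each surface. Your extra bookkeeping---that monodromy acts trivially on the rank-one summands $H^0$ and $H^4$, so equality of traces on the rank-$24$ Mukai lattice descends to $H^2$ even though the Fourier-Mukai map need not respect the grading---is a detail the paper leaves implicit, and it is handled correctly.
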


We also have results when $T$ is semisimple:
\begin{prop} \label{prop:ADE}
Suppose that $X$ and $Y$ are derived equivalent K3 surfaces
over $F=\bC((t))$.  Then the following conditions are equivalent:
\begin{itemize}
\item{$X$ (equivalently, $Y$)
has monodromy acting via an element of a product of Weyl groups;}
\item{$X$ and $Y$ admit models with central fiber consisting of a K3
surface with ADE singularities.}
\end{itemize}
Thus both $X(F)$ and $Y(F)$ are nonempty.
\end{prop}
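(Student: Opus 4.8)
The plan is to prove the two bullet points equivalent for a single surface, say $X$, and then transport the monodromy condition to $Y$. The transport is immediate from the input recorded just before the statement: a derived equivalence induces a monodromy-equivariant isomorphism of Mukai lattices $\tilde{H}(X,\bZ)\simeq \tilde{H}(Y,\bZ)$. Since monodromy acts trivially on the degree-zero and degree-four summands, this restricts to a monodromy-equivariant isometry $H^2(X,\bZ)\simeq H^2(Y,\bZ)$; hence the monodromy of $X$ lies in a product of Weyl groups of ADE root systems embedded in $H^2$ if and only if that of $Y$ does. This justifies the parenthetical ``(equivalently, $Y$)'' and reduces us to working with $X$ alone.

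For the implication \emph{ADE model $\Rightarrow$ Weyl monodromy}, I would begin from a model $\cX\ra \Delta$ whose central fiber $X_0$ has only rational double points. By the Brieskorn--Tyurina theory of simultaneous resolution, after a finite base change $t\mapsto t^e$ the singularities are resolved in family, producing a smooth model with smooth K3 central fiber; thus $T^e=I$, so $T$ is semisimple of finite order. The local Picard--Lefschetz analysis at each rational double point (Brieskorn, Pinkham, Looijenga) identifies the vanishing cycles with the roots of the associated ADE root system and shows that $T$ acts through the product of the corresponding Weyl groups.

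The substantive direction is \emph{Weyl monodromy $\Rightarrow$ ADE model}. Here I would exploit semisimplicity: writing $T^e=I$, perform the base change to $F'=\bC((s))$ with $s^e=t$, which kills the monodromy. By the semistable reduction theorem together with the Kulikov--Persson--Pinkham classification, trivial monodromy forces \emph{good reduction} over $\Delta'=\Spec\bC[[s]]$, so $X_{F'}$ admits a smooth model $\cX'\ra \Delta'$ with smooth K3 central fiber $X'_0$. The Galois group $\mu_e$ of $F'/F$ acts on $\cX'$, inducing an automorphism $\sigma$ of $X'_0$ whose cohomological action is identified with $T$. Because an element of a product of ADE Weyl groups fixes the orthogonal complement of the $(-2)$-root classes pointwise, and these roots are algebraic while the period line $H^{2,0}$ is orthogonal to all algebraic classes, $\sigma$ acts trivially on $H^{2,0}$ and is therefore \emph{symplectic}. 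By Nikulin's classification of finite symplectic automorphisms of K3 surfaces, $\sigma$ has isolated fixed points and the quotient $X'_0/\sigma$ carries only rational double points; the quotient family $\cX'/\mu_e \ra \Delta$ is then the desired model with ADE central fiber. Finally, ADE reduction yields a rational point: the smooth locus of the ADE central fiber is a dense open set of smooth points of the regular total space, and any such $\bC$-point lifts by Hensel's Lemma to a section, giving $X(F)\neq\emptyset$ and, symmetrically, $Y(F)\neq\emptyset$.

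I expect the main obstacle to lie in the construction of the third paragraph: verifying that the $\mu_e$-action extends to the good-reduction model and descends to a genuine model over $\Delta$, and that the induced automorphism is truly symplectic with precisely the fixed-point structure required so that Nikulin's theorem produces rational double points rather than worse singularities. Controlling the regularity of the quotient total space away from the fixed locus, so that the Hensel lift is actually available, is the delicate step.
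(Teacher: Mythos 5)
Your third paragraph---the substantive direction---rests on a construction that does not work. First, the deck transformation of $F'/F$ does not in general extend to a biregular automorphism $\sigma$ of the smooth model $\cX'$ acting on $H^2(X_0',\bZ)$ by $T$. If it did, Torelli would force $T$ to preserve the ample cone of $X_0'$; but when $T\neq I$ the limit $g_0$ of the polarization is nef and non-ample, with null classes exactly the irreducible $(-2)$-curves representing the vanishing cycles, and a nontrivial product of reflections in such classes moves the ample chamber. The obstruction to extending the action is precisely the flop ambiguity among smooth models. Second, even granting such a $\sigma$, the quotient $\cX'/\mu_e \ra \Delta$ is the wrong object: away from the fixed locus of $\sigma$ the map $\cX' \ra \cX'/\mu_e$ is \'etale, so the fiber over $t=0$ is cut out by $s^e=0$ and is everywhere non-reduced of multiplicity $e$. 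This is not a model with reduced ADE central fiber, and Hensel's Lemma produces no sections through a multiple fiber, so the final claim $X(F)\neq\emptyset$ does not follow. Third, the resolution of $X_0'/\sigma$ is the Nikulin-quotient K3, in general not isomorphic to $X_0'$; by contrast, the base change to $\Delta'$ of a genuine ADE model must have central fiber \emph{equal} to the contraction of $X_0'$ (Brieskorn simultaneous resolution recovers $X_0'$ itself), not a degree-$e$ quotient of it.

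The correct mechanism is contraction plus descent rather than quotient, and this is what the paper does. The vanishing-cycle classes span a negative definite ADE lattice in $\Pic(X_0')$ orthogonal to $g_0$ and are represented by an ADE configuration of smooth rational curves; $g_0$ is semi-ample and contracts exactly these curves, giving a relatively polarized model over $\Spec(R_1)$ with RDP central fiber. That contracted model descends to $\Spec(R)$ because the Galois action preserves the relative polarization and acts trivially on the cohomology of the contracted fiber, so the descended model has a reduced RDP central fiber to which Hensel's Lemma applies at smooth points. Two smaller remarks: your transport argument in the first paragraph is imprecise, since the Mukai-lattice isometry does not respect the degree decomposition and so does not ``restrict'' to $H^2(X,\bZ)\simeq H^2(Y,\bZ)$; the correct statement is that the sublattice on which monodromy acts nontrivially is spanned by $(-2)$-classes orthogonal to a polarization, hence lies in $H^2$ on both sides and is carried over equivariantly. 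Your Brieskorn--Tyurina argument for the implication from ADE models to Weyl monodromy is fine.
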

\begin{proof}  
We elaborate on the first condition:  Let $T$ denote
the monodromy of $X$.  Then there exist vanishing
cycles $\gamma_1,\ldots,\gamma_s$ for $X$ such that each
$\gamma_i^2=-2$, $\left<\gamma_1,\ldots,\gamma_s\right>$ is
negative definite, and $T$ is a product of reflections 
associated with the $\gamma_i$.  If $g$ is any polarization 
on $X$ then the $\gamma_i$ are orthogonal to $g$.  
Thus the Fourier-Mukai transform restricts to an isomorphism on
the sublattice generated by the $\gamma_i$.  
In particular,
the monodromy of $Y$ admits the same interpretation as a product
of reflections.  

Let $L$ denote the smallest saturated sublattice of $H^2(X,\bZ)$ 
containing $\gamma_1,\ldots,\gamma_s$---the classification
of Dynkin diagrams implies it is a direct sum of
lattices of ADE type.  Let $M$ denote the corresponding lattice
in $H^2(Y,\bZ)$, which is isomorphic to $L$.

After a basechange 
$$\Spec(R_1) \ra \Spec(R), \quad t_1^e=t$$
where $e$ is the order of $T$, the Torelli Theorem gives smooth 
(Type I Kulikov) models
$$\cX_1,\cY_1 \ra \Spec(R_1)$$
with central fibers having ADE configurations of type $M$, consisting of
smooth rational curves.  Blowing these down yield models
$$\cX_1',\cY_1' \ra \Spec(R_1)$$
which descend to 
$$\cX,\cY \ra \Spec(R),$$
i.e., ADE models of $X$ and $Y$.  

An application of Hensel's Lemma gives that $X(F),Y(F)\neq \emptyset$.
\end{proof}

\section{Semistable models over $p$-adic fields}
Let $F$ be a $p$-adic field with ring of integers $R$. 

We start with the case of good reduction, which follows
from Theorem~\ref{theo:finite} and Hensel's Lemma:
\begin{coro}
Let $X$ and $Y$ be K3 surfaces over $F$, which are derived equivalent
and have good reduction 
over $F$.
Then $X(F)\neq \emptyset$ if and only if $Y(F)\neq \emptyset$.  
\end{coro}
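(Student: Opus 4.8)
The plan is to reduce the corollary to the finite-field case (Theorem~\ref{theo:finite}) via a standard specialization-and-lifting argument. First I would observe that good reduction gives smooth proper models $\cX, \cY \ra \Spec(R)$ with generic fibers $X, Y$; let $k$ denote the (finite) residue field and $X_0, Y_0$ the special fibers, which are K3 surfaces over $k$. The key structural input is that a derived equivalence over $F$ should specialize to a derived equivalence over $k$: the Fourier-Mukai kernel $\cE \in D^b(X \times_F Y)$ extends (after possibly enlarging the model, using properness and the fact that $X \times Y$ has good reduction) to a perfect complex on $\cX \times_R \cY$, whose restriction to the special fiber $X_0 \times_k Y_0$ induces a Fourier-Mukai transform that is again an equivalence. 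I would justify that the specialized transform is an equivalence by the $\ell$-adic criterion: smooth proper base change identifies $\tilde H(X_{\bar F}, \bZ_\ell)$ with $\tilde H(X_{0,\bar k}, \bZ_\ell)$ compatibly with $\phi_{\cE}$, so the specialized kernel induces an isometry of Mukai lattices and hence (by the Lieblich--Olsson characterization, Theorem~\ref{theo:LO}, in characteristic $\neq 2$) a genuine derived equivalence $D^b(X_0) \simeq D^b(Y_0)$.

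With $X_0$ and $Y_0$ derived equivalent over the finite field $k$, Theorem~\ref{theo:finite} gives $|X_0(k)| = |Y_0(k)|$; in particular $X_0(k) \neq \emptyset$ if and only if $Y_0(k) \neq \emptyset$. The second half of the argument is the lifting step. If $X(F) \neq \emptyset$, then by properness of $\cX \ra \Spec(R)$ the point extends to an $R$-point, which reduces to a $k$-point of $X_0$; hence $X_0(k) \neq \emptyset$, so $Y_0(k) \neq \emptyset$. Conversely, any smooth $k$-point of $Y_0$ lifts to an $R$-point of the smooth morphism $\cY \ra \Spec(R)$ by Hensel's Lemma, yielding an element of $Y(F)$. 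Running the symmetric argument establishes the biconditional.

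The step I expect to be the main obstacle is the specialization of the derived equivalence---more precisely, extending the Fourier-Mukai kernel $\cE$ across the special fiber and verifying that its restriction still induces an equivalence rather than merely a cohomological isometry. Over a general base one must be careful that $\cE$ extends to a \emph{perfect} complex and that the extension's support behaves well; the cleanest route is to bypass the kernel entirely and argue purely at the level of the $\ell$-adic Mukai lattice, where smooth proper base change furnishes the Galois/monodromy-compatible isometry $\tilde H(X_{0,\bar k}, \bZ_\ell) \simeq \tilde H(Y_{0,\bar k}, \bZ_\ell)$ directly, after which Theorem~\ref{theo:LO} supplies the equivalence over $\bar k$ and one descends as in Proposition~\ref{prop:descend}. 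The characteristic $\neq 2$ hypothesis of Theorem~\ref{theo:LO} is automatically satisfied for all but finitely many residue characteristics and can be assumed throughout the $p$-adic discussion.

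Once $X_0$ and $Y_0$ are derived equivalent over $k$, the remaining ingredients are entirely formal: Theorem~\ref{theo:finite} transfers nonemptiness of rational points between the special fibers, and Hensel's Lemma transfers nonemptiness between each special fiber and its generic fiber. Thus the genuinely new content is concentrated in the specialization of derived equivalence, and the rest is the reduction/lifting dictionary standard for good reduction.
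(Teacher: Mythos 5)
Your proposal is correct and follows essentially the same route as the paper, which justifies this corollary only by the remark that it ``follows from Theorem~\ref{theo:finite} and Hensel's Lemma''; your elaboration of the specialization step supplies exactly the detail the paper leaves implicit. One simplification worth noting: you do not actually need an honest derived equivalence of the special fibers (and hence need neither Theorem~\ref{theo:LO}, with its characteristic $\neq 2$ hypothesis, nor Proposition~\ref{prop:descend}, which the paper states only in characteristic zero) --- a Frobenius-equivariant isometry of the $\ell$-adic Mukai lattices, furnished by smooth proper base change from the Galois-equivariant isometry over $F$, already gives $|X_0(k')|=|Y_0(k')|$ by the Lefschetz trace formula, which is all the reduction-and-lifting dictionary requires.
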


We can extend this as follows:
\begin{prop}
Assume that the residue characteristic $p\ge 7$.  
Let $X$ and $Y$ be K3 surfaces over $F$, which are derived equivalent 
and have ADE reduction 
over $F$.  Then $X(F)\neq \emptyset$ if and
only if $Y(F)\neq \emptyset$.   
\end{prop}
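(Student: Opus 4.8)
The plan is to reduce the question to a Lefschetz-type point count on the special fibers, exactly as in the good reduction case treated just above, while keeping track of the extra data introduced by the rational double points. Throughout we use that the derived equivalence over $F$ induces, via Theorem~\ref{theo:LO} and the discussion preceding Proposition~\ref{prop:descend}, an isometry
$$\phi:\tilde{H}(\bar{X},\bZ_{\ell})\stackrel{\sim}{\ra}\tilde{H}(\bar{Y},\bZ_{\ell})$$
compatible with the full action of $\Gal(\bar F/F)$, hence with both the inertia (monodromy) and the residual Frobenius. In particular the monodromy operators of $X$ and $Y$ are conjugate, so by the analysis in Proposition~\ref{prop:ADE} the inertia acts in each case through the same finite group generated by reflections in $(-2)$ vanishing cycles $\gamma_1,\dots,\gamma_s$; as in that proof, $\phi$ carries the saturated sublattice $L$ they span isometrically and Frobenius-equivariantly onto its counterpart $M$, since these sublattices are characterized as orthogonal to the polarizations.

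First I would fix regular models $\cX,\cY\ra\Spec(R)$ whose central fibers $\cX_0,\cY_0$ are K3 surfaces over the residue field $k$ with rational double points. Since $p\ge 7$ the singularities are the classical, taut ones of the expected Dynkin type, and they admit minimal resolutions $\widetilde{\cX}_0\ra\cX_0$ over $k$ with smooth K3 fibers, the exceptional locus being an ADE configuration of rational curves realizing the classes $\gamma_i$. The key local observation is that a section $\Spec(R)\ra\cX$ cannot specialize into a rational double point $P$: because the central fiber is singular at $P$ while the total space is regular, the uniformizer $t$ lies in $\mathfrak m_{\cX,P}^2$, so no $R$-valued point can reduce to $P$. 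Combined with properness and Hensel's Lemma this gives
$$X(F)\neq\emptyset \iff \cX_0^{\,\mathrm{sm}}(k)\neq\emptyset,$$
where $\cX_0^{\,\mathrm{sm}}$ is the complement of the double points, and similarly for $Y$.

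Next I would express $|\cX_0^{\,\mathrm{sm}}(k)|$ cohomologically. Writing $|\cX_0^{\,\mathrm{sm}}(k)|=|\widetilde{\cX}_0(k)|-|E_X(k)|$, where $E_X$ is the exceptional divisor, the first term is governed by the Grothendieck--Lefschetz trace formula on the smooth K3 surface $\widetilde{\cX}_0$, i.e.\ by the Frobenius eigenvalues on $H^2(\widetilde{\cX}_0,\bQ_\ell)$, and the correction term $|E_X(k)|$ is governed by the Frobenius permutation of the curves realizing $L$. Both pieces are the restriction, transported by nearby cycles, of the Galois action on $\tilde{H}(\bar X,\bZ_\ell)$: the residual Frobenius on $H^2(\bar X,\bZ_\ell)$ and the Frobenius action on $L$. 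Since $\phi$ matches exactly these data for $X$ and $Y$, I would conclude $|\widetilde{\cX}_0(k)|=|\widetilde{\cY}_0(k)|$, as in the good reduction case, and $|E_X(k)|=|E_Y(k)|$, whence $|\cX_0^{\,\mathrm{sm}}(k)|=|\cY_0^{\,\mathrm{sm}}(k)|$ and the biconditional follows. (This is consistent with, and refines, the equality $\indx(X)=\indx(Y)$ of Proposition~\ref{prop:index}.)

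The main obstacle is the passage from the generic-fiber derived equivalence to the special-fiber count: one does not have an honest derived equivalence of the central fibers, only the $\ell$-adic isometry $\phi$, so the comparison between the Frobenius on $H^2(\widetilde{\cX}_0,\bQ_\ell)$ and the residual Frobenius on $H^2(\bar X,\bZ_\ell)$ must be made through the theory of nearby cycles for potentially good (finite inertia) reduction, and one must verify that $\phi$ genuinely intertwines the monodromy and carries $L$ onto $M$ compatibly with Frobenius. A secondary point, for which the hypothesis $p\ge 7$ is essential, is that the rational double points are the tame, classically classified ones, so that the simultaneous resolution and the identification of the exceptional configuration with the vanishing cycles behave as in characteristic zero.
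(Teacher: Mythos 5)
Your strategy coincides with the paper's: both reduce the statement, via Hensel's Lemma and the observation that a section of a regular model must meet the central fiber in its smooth locus, to comparing the number of $k$-points of the smooth loci of the two ADE central fibers, and both make that comparison using the $\ell$-adic realization of the Fourier--Mukai kernel together with the hypothesis $p\ge 7$ to control the rational double points. Your inclusion--exclusion $|\widetilde{\cX}_0(k)|-|E_X(k)|$ is essentially the same computation the paper performs with $H^2_{c}$ of the smooth locus, identified with $L^{\perp}$.

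There are, however, two places where your write-up stops short of a proof. First, the step you yourself flag as the ``main obstacle''---transporting the Galois action on $H^2(\bar X,\bQ_\ell)$ to the Frobenius action on $H^2(\widetilde{\cX}_0,\bQ_\ell)$---is precisely where the paper does its real work: it invokes Artin's version of Brieskorn simultaneous resolution to realize $\widetilde{\cX}_0$ and $\widetilde{\cY}_0$ as central fibers of smooth proper \emph{algebraic space} models over a finite extension of $R$, after which smooth proper base change produces the specialization isomorphism $\psi$. Naming ``nearby cycles for potentially good reduction'' without supplying this construction leaves the comparison unproved. Second, your assertion that $\phi$ carries $L$ onto $M$ \emph{Frobenius-equivariantly} ``since these sublattices are characterized as orthogonal to the polarizations'' is wrong as stated: the orthogonal complement of a polarization has rank $21$, far larger than $L$, and the specialized isometry is in any case only canonical up to composing with reflections in the exceptional classes. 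The paper accordingly claims, and uses, only the Galois-equivariant identification $L^{\perp}\simeq M^{\perp}$, realized as $H^2_{c}$ of the smooth loci. If you keep your decomposition, you must justify separately that $|E_X(k)|=|E_Y(k)|$; note that this quantity involves the number of $k$-rational singular points (the Frobenius-fixed connected components of the exceptional locus), which is not formally determined by the isomorphism class of $L\otimes\bQ_\ell$ as an abstract Galois module.
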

\begin{proof}
Let $k$ be the finite residue field,
$\cX,\cY \ra \Spec(R)$ proper models of $X$ and $Y$,
$\cX_0$ and $\cY_0$ denote the resulting reductions,
and 
$\tilde{\cX}_0$ and $\tilde{\cY}_0$ their minimal resolutions
over $\bar{k}$.
Applying Artin's version of Brieskorn simultaneous resolution
\cite[Th.~2]{Artin}, there exists a finite extension
$$\Spec(R_1) \ra \Spec(R)$$
and proper models 
$$\tilde{\cX}\ra \cX \times_{\Spec(R)}\Spec(R')  \ra \Spec(R'), 
$$
$$
\tilde{\cY}\ra \cY \times_{\Spec(R)}\Spec(R')  \ra \Spec(R'),$$
in the category of algebraic spaces, with central fibers
$\tilde{\cX}_0$ and $\tilde{\cY}_0$.  

The Fourier-Mukai transform induces isomorphisms
$$H^2_{\text{\'et}}(\bar{X},\bQ_{\ell}) \ra
H^2_{\text{\'et}}(\bar{Y},\bQ_{\ell}).$$
Indeed, there is a natural choice on the integral transcendental cohomology.
The stable equivalence on Picard groups (see Proposition~\ref{prop:PicBr})
guarantees the existence of a isomorphism on the algebraic classes after
tensoring by $\bQ$.

Specializing yields an isomorphism 
$$\psi:H^2_{\text{\'et}}(\tilde{\cX}_0,\bQ_{\ell}) \ra
H^2_{\text{\'et}}(\tilde{\cY}_0,\bQ_{\ell}).$$
Note that since $\tilde{\cX},\tilde{\cY}$ are not projective
over $\Spec(R')$, there is not an evident interpretation of this
as a derived equivalence over $\Spec(R')$.  (See \cite{BrMa}
for such interpretations for K3 fibrations over complex curves.) 
Furthermore $\psi$ is far from unique, as we may
compose with reflections arising from exceptional curves
in either $\tilde{\cX}_0 \ra \cX_0$ or 
$\tilde{\cY}_0 \ra \cY_0$ associated with vanishing cycles
of $\cX$ or $\cY$.  

Let $L$ (resp.~$M$) denote the lattice of vanishing cycles 
in $H^2(X,\bQ_{\ell})$ (resp.~$H^2(Y,\bQ_{\ell})$),
with orthogonal complement $L^{\perp}$ (resp.$M^{\perp}$).  
The isomorphism $\psi$ does induce an isomorphism
$$L^{\perp} \simeq M^{\perp}$$
compatible with Galois actions.  
As in the proof of Proposition~\ref{prop:ADE}, 
the lattices $L$ and $M$ are isomorphic once we fix an interpretation
via vanishing cycles of our models.

Our assumptions on $p$ guarantee that the classification and deformations of rational double
points over $k$ coincides with the classification in characteristic $0$ \cite{Artin2}.  
Choose new {\em regular} models for $X$ and $Y$
$$\cX'',\cY'' \ra \Spec(R)$$
whose central fibers $\cX''_0$ and $\cY''_0$ are obtained from 
$\tilde{\cX}_0$ and $\tilde{\cY}_0$ by blowing down the $(-2)$-curves
classes associated with $L$ and $M$ respectively.  
Let $\cX_{\circ} \subset \cX''_0$ and $\cY_{\circ} \subset \cY''_0$
denote the smooth loci, i.e., the complements of the rational
curves associated with $L$ and $M$ respectively.  

We claim that $\psi$ induces an isomorphism on
compactly supported cohomology
$$H^2_{c,\text{\'et}}(\bar{\cX}_{\circ},\bQ_{\ell}) \simeq 
H^2_{c,\text{\'et}}(\bar{\cY}_{\circ},\bQ_{\ell}),
$$
compatible with Galois actions.  Indeed, these may be identified with $L^{\perp}$
and $M^{\perp}$, respectively. 
The Weil conjectures yield then that
$$|\cX_{\circ}(k)|=|\cY_{\circ}(k)|$$
and Hensel's Lemma implies our claim.  
\end{proof}

\begin{ques}
Is admitting a model with good or ADE reduction a derived invariant?
\end{ques}
Y.~Matsumoto and C.~Liedtke have recently addressed this.
Having {\em potentially} good reduction is governed by whether
$H^2_{\text{\'et}}(\bar{X},\bQ_\ell)$ is unramified, under
technical assumptions \cite[Th.~1.1]{Mat}.
These are satisified if there exists a Kulikov
model after some basechange \cite[p.~2]{LieMat}.
The ramification condition depends 
on the $\ell$-adic cohomology and thus only on
the derived equivalence class. 
Proposition~\ref{prop:ADE} suggests a monodromy characterization
of ADE reduction in mixed characteristic; under technical assumptions,
there exists such a model if the cohomology is unramified 
\cite[Th.~5.1]{LieMat}.

\section*{Appendix: Semistability and derived equivalence}
To understand the implications of derived equivalence
for rational points over local fields, we must first describe
how monodromy governs the existence of models with good properties.
For K3 surfaces, it is widely expected that unipotent monodromy
should suffice to guarantee the existence of a Kulikov model.
Over $\bC$ this is widely known to experts; the referee
advised us that this was addressed in correspondence
among R.~Friedman, D.~Morrison, and F.~Scattone in 1983.
As we are unaware of a published account, we 
offer an argument:

\begin{theo} \label{theo:Kulikov}
Let $X$ be a K3 surface over $F=\bC((t))$. 
Then $X$ admits a Kulikov model if and only if its monodromy is
unipotent.
\end{theo}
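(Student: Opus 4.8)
The plan is to prove the two implications separately; the forward direction is routine and the converse carries all the content.

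For the \emph{only if} direction: a Kulikov model is in particular a semistable model, i.e., its central fiber is a reduced normal crossings divisor. For any such degeneration the analysis of nearby cycles shows that $T$ is unipotent (this is the unipotent case of the local monodromy theorem, sharpening the quasi-unipotence recalled above). Restricting attention to $H^2$ loses nothing, since $H^0$ and $H^4$ carry trivial monodromy and $H^1=H^3=0$. I would dispatch this in a sentence or two.

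For the \emph{if} direction, suppose $T$ is unipotent. First I would fix a projective model and resolve, arranging that the total space is smooth with normal crossings central fiber $\sum_i m_i D_i$. By Mumford's semistable reduction theorem there is a finite base change
$$\phi:\Delta' \ra \Delta, \qquad \Delta'=\Spec\bC[[s]], \quad s^d=t,$$
and a modification producing a semistable model $\cY \ra \Delta'$ of $X_{F'}$, where $F'=\bC((s))$. Because the generic fiber has trivial canonical class, the theorem of Persson--Pinkham lets me further modify $\cY$ --- without disturbing the generic fiber or the reducedness of the central fiber --- so that $\omega_{\cY/\Delta'}\cong \cO_{\cY}$; that is, $\cY\ra\Delta'$ is a Kulikov model. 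At this point $X_{F'}$ has a Kulikov model, and the problem reduces to removing the base change $\phi$.

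The heart of the matter, and the step I expect to be hardest, is this descent past $\phi$. The Galois group $\mu_d$ acts on $\Delta'$, and a loop around $t=0$ lifts to a path joining $s$ to $\zeta s$; thus the monodromy $T$ of $X$ is a $d$-th root of the (automatically unipotent) monodromy $T^d$ of $\cY\ra\Delta'$. The unipotence of $T$ should be exactly the input forcing this root to be unipotent, hence forcing the finite-order deck transformation to act trivially on the limit mixed Hodge structure and, via the global Torelli theorem for K3 surfaces, to lift to an action on $\cY$ compatible with the fibration (possibly after an equivariant modification). I would then form the quotient $\cY/\mu_d \ra \Delta$ and resolve its cyclic quotient singularities crepantly, with unipotence of $T$ guaranteeing that no multiplicities are introduced along the components of the central fiber, yielding a genuine Kulikov model of $X$ over $\Delta$. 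The delicate point is converting the cohomological statement ``$T$ unipotent'' into the geometric statement ``the base change was unnecessary,'' and controlling central-fiber multiplicities under the quotient-and-resolve step. An alternative I would keep in reserve, bypassing the descent entirely, is to build the model from Hodge theory: the nilpotent operator $N=\log T$ determines a limit mixed Hodge structure and a Kulikov type (I, II, or III by the index of nilpotency of $N$), and surjectivity of the period map for one-parameter degenerations produces a Kulikov model realizing this limit over $\Delta$, whose generic fiber one matches with $X$ by Torelli. This route trades the descent problem for the surjectivity input, which is itself substantial.
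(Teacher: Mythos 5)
Your \emph{only if} direction and the reduction to ``Kulikov model after base change'' are fine and match the paper's setup. The problem is your primary descent mechanism. Forming $\cY/\mu_d$ and resolving requires first that the deck transformation act on the Kulikov model $\cY$ at all: Kulikov models are highly non-unique (they differ by flops in the central fiber), and there is no argument given that one can choose $\cY$ equivariantly. More seriously, the assertion that unipotence of $T$ ``guarantees that no multiplicities are introduced'' in a crepant resolution of the quotient is not something you can invoke --- it is essentially a restatement of the theorem you are trying to prove, and you supply no mechanism converting the cohomological hypothesis into control of the central fiber of $\cY/\mu_d$. The claim that the deck transformation acts trivially on the limit mixed Hodge structure is also off: the generator acts through $T$ itself, which is unipotent but not the identity, so the Torelli-type lifting you invoke does not apply as stated. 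As written, the quotient-and-resolve route has a genuine gap at exactly the step you flag as delicate.

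The paper takes your ``reserve'' route, and the substance lies in the ingredients you gloss over. Writing $mN=\log(T^e)$ with $N$ primitive, it uses the Friedman and Friedman--Scattone structure theory of Type II and Type III central fibers (chains of $m+1$ surfaces, respectively special $m$-bands of hexagons) to construct an explicit candidate central fiber $\wcX_0'$ downstairs whose associated nilpotent is the primitive $N$, and a lemma transporting a degree $2d$ semiample divisor from $\wcX_0$ to $\wcX_0'$ (using \cite{FriedAnnals}, \cite{Friedman}, and Shepherd-Barron to make it nef). Only then can one form the partial toroidal compactification $\overline{\Gamma\backslash\cD}_N$ carrying a universal family (via the \'etale period map from $\Def(\wcX_0',g')$), extend the locally liftable period map $\Delta^{\circ}\ra\Gamma\backslash\cD$ across the origin, pull back the universal family, and identify it with $X$ over the punctured disk by Torelli. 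So ``surjectivity of the period map for one-parameter degenerations'' is not an off-the-shelf input here: producing an admissible degeneration realizing the primitive nilpotent $N$, with a compatible quasi-polarization, is the actual work, and your sketch does not address it.
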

\begin{coro}
Let $X$ and $Y$ be derived equivalent K3 surfaces over $F$.
Then $X$ admits a Kulikov model if and only if $Y$ admits
a Kulikov model. 
\end{coro}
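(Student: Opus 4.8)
The plan is to deduce everything from Theorem~\ref{theo:Kulikov}, which reduces the existence of a Kulikov model to unipotence of the monodromy. Thus it suffices to show that unipotence of monodromy is preserved under derived equivalence over $F=\bC((t))$. First I would invoke the observation already recorded above (from Orlov's Theorem together with the discussion preceding Proposition~\ref{prop:descend}) that a derived equivalence $\Phi\colon D^b(X)\ra D^b(Y)$ induces a monodromy-equivariant isomorphism of Mukai lattices
$$\phi\colon \tilde{H}(X,\bZ) \stackrel{\sim}{\ra} \tilde{H}(Y,\bZ).$$
In particular the monodromy operators $\tilde{T}_X$ and $\tilde{T}_Y$ on the respective Mukai lattices are conjugate, hence have equal characteristic polynomials.

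Next I would compare the Mukai-lattice monodromy with the monodromy $T$ on $H^2$. Since monodromy acts trivially on the summands $H^0(X,\bZ)(-1)$ and $H^4(X,\bZ)(1)$ (these are spanned by the monodromy-invariant fundamental class and point class), the operator $\tilde{T}_X$ is block-diagonal, with two trivial $1\times 1$ blocks and the block $T_X$ on $H^2(X,\bZ)$. Consequently the characteristic polynomial of $\tilde{T}_X$ factors as $(x-1)^2$ times that of $T_X$, and the same holds for $Y$. Because a matrix is unipotent precisely when its characteristic polynomial is a power of $(x-1)$, I conclude that $\tilde{T}_X$ is unipotent if and only if $T_X$ is, and likewise for $Y$.

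Combining these two steps, the equality of characteristic polynomials of $\tilde{T}_X$ and $\tilde{T}_Y$ forces $T_X$ to be unipotent if and only if $T_Y$ is. By Theorem~\ref{theo:Kulikov}, $X$ admits a Kulikov model if and only if its monodromy $T_X$ is unipotent, and similarly for $Y$; hence $X$ admits a Kulikov model if and only if $Y$ does. I do not expect any genuine obstacle here: essentially all of the mathematical content lies in Theorem~\ref{theo:Kulikov} itself, and the corollary is a short formal deduction once the monodromy-equivariant identification of Mukai lattices is in hand. The only point requiring a moment's care is the elementary linear-algebra observation that unipotence is detected by the characteristic polynomial and is unaffected by the two trivial summands of the Mukai lattice.
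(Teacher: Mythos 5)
Your proposal is correct and follows essentially the same route as the paper: the paper's justification is precisely that a derived equivalence over $F$ yields a monodromy-equivariant isomorphism of Mukai lattices, hence equal characteristic polynomials of the monodromy operators, and then Theorem~\ref{theo:Kulikov} finishes. Your added remarks about the trivial blocks on $H^0$ and $H^4$ and the detection of unipotence by the characteristic polynomial are correct and merely make explicit what the paper leaves implicit.
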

As we have seen, if $X$ and $Y$ are derived equivalent over $F$ then
their Mukai lattices admit a monodromy-equivariant isomorphism; thus the
characteristic polynomials of their monodromy matrices are equal.

The remainder of this section is devoted the proof of Theorem~\ref{theo:Kulikov}.
We start with a review of basic results on Kulikov models.

Let $R=\bC[[t]]$,
$\Delta=\Spec(R)$, and $\Delta^{\circ}=\Spec(F)$.
The monodromy $T$ of $X$ over $\bC((t))$ satisfies
$$(T^e-I)^f=0$$
for some $e,f \in \bN$.
We take $e$ and $f$ minimal with this property.

The semistable reduction theorem \cite{KKMS}
implies there exists an integer $n\ge 1$ such that after basechange to
$$R_2= \bC[[t_2]], F_2=\bC((t_2)), \quad t_2^n=t,$$
there exists a flat proper
$$\pi_2:\cX_2 \ra \Delta_2=\Spec(R_2)$$
such that
\begin{itemize}
\item{the generic fiber is the basechange of $X$ to $F_2$;}
\item{the central fiber $\pi_2^{-1}(0)$ is a reduced normal
crossings divisor.}
\end{itemize}
We call this a {\em semistable model} for $X$.
It is well-known that semistable reductions have unipotent monodromy so
$e|n$.

By work of Kulikov and Persson-Pinkham \cite{Kulikov,PerPin},
there exists a semistable modification of $\cX_2$
$$\varpi:\wcX \ra \Delta_2$$
with trivial canonical class, i.e., there exists a birational
map $\cX_2 \dashrightarrow \wcX$ that is an isomorphism
away from the central fibers.  We call this a {\em Kulikov model} for $X$.
Furthermore, the structure of the central fiber $\wcX_0$ can be
described in more detail:
\begin{enumerate}
\item[Type I]{$\wcX_0$ is a K3 surface and $f=1$.}
\item[Type II]{$\wcX_0$ is a chain of surfaces glued along elliptic
curves, with rational surfaces at the end points and elliptic ruled
surfaces in between; here
$f=2$.}
\item[Type III]{$\wcX_0$ is a union of rational surfaces and $f=3$.}
\end{enumerate}
We will say more about the Type III case:  It determines a combinatorial triangulation of
the sphere
with vertices indexed by irreducible components, edges indexed by double curves,
and `triangles' indexed by triple points \cite{Morrison}.
We analyze this combinatorial structure of $\wcX_0$ in terms of the integer $m$.

Let $\wcX_0=\cup_{i=1}^n V_i$ denote the irreducible components, $\tilde{V}_i$ their normalizations,
and $D_{ij} \subset \tilde{V}_i$ the double curves over $V_i \cap V_j$.
\begin{defi}
$\wcX_0$ is in {\em minus-one} form if for each double curve $D_{ij}$
we have
$({D'_{ij}}^2)_{V_i}=-1$ if $D'_{ij}$ is a smooth component of $D_{ij}$
and $(D_{ij}^2)_{V_i}=1$ if $D_{ij}$ is nodal.
\end{defi}
Miranda-Morrison \cite{MiMo} have shown that
after elementary transformation of $\wcX$, we may assume that $\wcX$ is in minus-one form.

The following are equivalent \cite[\S 3]{Friedman},\cite[0.5,7.1]{FriSca}:
\begin{itemize}
\item{the logarithm of the monodromy is $m$ times a primitive matrix;}
\item{$\wcX_0$ admits a `special $\mu_m$ action', i.e., acting trivially on the
sets of components, double/triple points, and Picard groups of the irreducible
components;}
\item{$\wcX_0$ admits `special $m$-bands of hexagons', i.e., the triangulation
coming from the components of $\wcX_0$ arises as a degree $m$ refinement of
another triangulation.}
\end{itemize}
In other words, $\wcX_0$ `looks like' it is obtained from applying semistable
reduction to the degree $m$ basechange of a Kulikov model.
Its central fiber $\wcX'_0$ can readily be described \cite[4.1]{Friedman}---its
triangulation is the one with refinement equal to the triangulation of $\wcX_0$,
and its components are contractions of the corresponding components of $\wcX_0$.
(When we refer to $\wcX'_0$ below in the Type III case, we mean the
surface defined by this process.)

For Type II we can do something similar \cite[\S 2]{FriedAnnals}.  After elementary modifications,
we may assume the elliptic surfaces are minimal.  Then following are equivalent:
\begin{itemize}
\item{the logarithm of the monodromy is $m$ times a primitive matrix;}
\item{$\wcX_0=V_0\cup_E \ldots \cup_E V_m$ is a chain of $m+1$ surfaces glued along
copies of an elliptic curve $E$, where $V_0$ and
$V_m$ are rational and $V_1,\ldots,V_{m-1}$ are minimal surfaces ruled over $E$.}
\end{itemize}
Again $\wcX_0$ `looks like' it is obtained from applying semistable reduction to
another Kulikov model with central fiber $\wcX'_0=V_0 \cup_E V_m$.
Moreover $(E^2)_{V_0}+(E^2)_{V_m}=0$ and $\wcX'_0$ is $d$-semistable
in the sense of Friedman \cite[2.1]{FriedAnnals}.
(When we refer to $\wcX'_0$ below in the Type II case, we mean the
surface defined by this process.)

There are refined Kulikov models taking into account polarizations:
Let $(X,g)$ be a polarized K3 surface over $F$ of degree $2d$.  Shepherd-Barron \cite{ShB}
has shown there exists a Kulikov
model $\varpi:\wcX \ra \Delta_2$ with the following properties:
\begin{itemize}
\item{there exists a specialization of $g$ to a nef Cartier divisor
on the central fiber $\wcX_0$;}
\item{$g$ is semi-ample relative to $\Delta_2$, inducing
$$\begin{array}{rcccl}
\wcX & & \stackrel{|g|}{\ra} & & \cZ \\
    & \searrow &  & \swarrow &  \\
    &          & \Delta_2    &
\end{array}
$$
where $\wcX_0 \ra \cZ_0$ is birational and $\cZ_0$ has
rational double points, normal crossings, or
singularities with local equations 
$$
xy=zt=0.
$$}
\end{itemize}
These will be called {\em quasi-polarized Kulikov models} and their
central fibers {\em admissible degenerations of degree $2d$.}

Recall the construction in sections five and six of \cite{FriSca}:
Let $\cD$ denote the period domain for degree $2d$ K3 surfaces and $\Gamma$
the corresponding arithmetic group---the orientation-preserving
automorphisms of the cohomology lattice $H^2(X,\bZ)$ fixing $g$.
Fix an admissible degeneration $(\cY_0,g)$ of degree $2d$ and its image $(\cZ_0,h)$,
with deformation spaces $\Def(\cY_0,g)\ra \Def(\cZ_0,h)$; the morphism
arises because $g$ is semiample over the deformation space.
Let
$$\overline{\Gamma \backslash \cD}_{N_{\cY_0}} \supset \Gamma \backslash \cD$$
denote the partial toroidal compactification
parametrizing limiting mixed Hodge structures with monodromy
weight filtration given by a nilpotent $N_{\cY_0}$ associated with
$\cY_0$ (see \cite[p.27]{FriSca}).
We do keep track of the stack structure.  
Given a holomorphic mapping
$$f:\{t:0<|t|<1 \}  \ra \Gamma \backslash \cD,
$$
that is locally liftable (lifting locally to $\cD$), with unipotent monodromy $\Gamma$-conjugate
to $N_{\cY_0}$, then $f$ extends to
$$f:\{t:|t|<1\} \ra \overline{\Gamma \backslash \cD}.$$
The period map extends to an \'etale morphism \cite[5.3.5,6.2]{FriSca}
$$\Def(\cY_0,g) \ra \overline{\Gamma \backslash \cD}.$$
Thus the partial compactification admits a (local) universal family.  

Theorem~\ref{theo:Kulikov} thus boils down to
\begin{quote}
The smallest positive integer $n$ for which we have a Kulikov model
equals the smallest positive integer $e$ such that
$T^e$ is unipotent.
\end{quote}
\begin{proof}
We show that a Kulikov model exists provided
the monodromy is unipotent.
Suppose we have unipotent monodromy over 
$$R_1=\bC[[t_1]], t_1^e=t,$$
and semistable reduction
$$\cX_2 \ra \Delta_2=\Spec(R_2), \quad R_2=\Spec(\bC[[t_2]]), t_2^{me}=t.$$
Let $\wcX \ra \Delta_2$ denote a Kulikov model, obtained after applying elementary transformations
as specified above.  Write
$$mN=\log(T^e)=(T^e-I)-\frac{1}{2}(T^e-I)^2$$
where $m\in \bN$ and $N$ is primitive (cf.\cite[1.2]{FriSca} for the Type III case).

Let $\wcX'_0$ be the candidate for the `replacement' Kulikov model, i.e.,
the central fiber of the Kulikov model we expect to find
$$\wcX' \ra \Delta_1.$$
In the Type I case $\wcX'_0=\wcX_0$ by Torelli, so we focus on the 
Type II and III cases.  

\begin{lemm} \label{lemm:degree2d}
Suppose that $\wcX_0$ admits a degree $2d$ semiample divisor $g$.
Then $\wcX_0'$ admits one as well, denoted by $g'$.
\end{lemm}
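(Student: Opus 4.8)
The plan is to use the semiample contraction attached to $g$ to descend the polarization across the refinement. By the quasi-polarized Kulikov structure recalled above \cite{ShB}, the divisor $g$ is semiample relative to $\Delta_2$ and induces $|g|\colon \wcX \ra \cZ$ with $\wcX_0 \ra \cZ_0$ birational onto an admissible degeneration $\cZ_0$ of degree $2d$. First I would pin down which components of $\wcX_0$ are contracted by $|g|$: in the Type II case the intermediate ruled surfaces $V_1,\ldots,V_{m-1}$ collapse onto the single elliptic double curve $E\subset \cZ_0$ via their rulings, while $V_0$ and $V_m$ map birationally onto the two surface components of $\cZ_0$; in the Type III case the components indexing the degree-$m$ hexagon-band refinement of \cite{Friedman,FriSca} collapse onto the double-curve locus. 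These are exactly the components discarded in passing from $\wcX_0$ to $\wcX'_0$, so I expect the natural contraction $c\colon \wcX_0 \ra \wcX'_0$ (contracting each intermediate ruled surface along $V_i \ra E$, resp.\ collapsing each band) to be subordinate to $|g|$, i.e.\ $|g|$ factors through $c$.

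Granting this, I would build $g'$ by descent along $c$. On each intermediate ruled surface $V_i$ the restriction $g|_{V_i}$ is the pullback of a divisor on $E$ along the ruling, hence restricts to the \emph{same} class on both boundary sections $E_{i-1}$ and $E_i$; chaining these equalities along $V_0,\ldots,V_m$ forces $g|_{V_0}$ and $g|_{V_m}$ to agree on the common curve $E$ of $\wcX'_0=V_0\cup_E V_m$, which is precisely the cocycle condition needed to glue $g|_{V_0}$ and $g|_{V_m}$ into a line bundle $g'$ on $\wcX'_0$ (the Type III case is analogous, gluing the pushforwards $g'|_W$ of $g|_V$ over corresponding components $V\ra W$ using that $g$ is constant along each band). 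I would then verify the two required properties. Semiampleness is immediate from $|g|=|g'|\circ c$, so $g'$ is semiample and induces the birational map $\wcX'_0\ra \cZ_0$. For the degree, I use that on a semistable central fiber $\wcX_0=\bigcup_i V_i$ one has $g^2=\sum_i (g|_{V_i})^2$, since each $V_i$ is a Cartier divisor in $\wcX$; because $g|_{V_i}$ is a pullback from the curve $E$ on every contracted component we have $(g|_{V_i})^2=0$ there, whence $(g')^2=(g|_{V_0})^2+(g|_{V_m})^2=\sum_i (g|_{V_i})^2=g^2=2d$.

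The hard part will be justifying that $g$ restricts as a pullback from $E$ on each intermediate component, equivalently that $g\cdot F=0$ for a ruling fiber $F$, i.e.\ that $c$ is $g$-trivial rather than transverse to $|g|$. I would extract this from the structure theory of admissible degenerations: the hypothesis that $\log$ of the monodromy is $m$ times a primitive matrix forces the Type II chain, resp.\ the Type III refinement, of \cite{Friedman,FriedAnnals,FriSca}, and nefness of $g$ together with $K_{\wcX}\equiv 0$ (so $K_{V_i}=-(E_{i-1}+E_i)$ on an intermediate ruled surface) pins $g|_{V_i}$ down up to its fiber degree; the vanishing $g\cdot F=0$ then follows from $V_i$ lying in the contracted locus of $\wcX_0\ra\cZ_0$. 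In Type III the corresponding statement—that $|g|$ contracts precisely the band refinement and no additional double curves that would obstruct the gluing—is the genuinely delicate point, and I would handle it by comparing the $g$-degrees of double curves across a band, using the minus-one normal form of \cite{MiMo} to control the relevant self-intersections.
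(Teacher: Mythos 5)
There is a genuine gap at exactly the step you flag as the hard part, and the route you propose around it does not work. Your construction requires the given divisor $g$ to be trivial on the ruling fibers of the intermediate components $V_1,\ldots,V_{m-1}$ (resp.\ trivial or a sum of fibers on the special bands of hexagons in Type III), so that $|g|$ factors through the collapse $c:\wcX_0\ra\wcX'_0$. This is false for a general quasi-polarized Kulikov model: nothing prevents $g$ from being ample on every component of $\wcX_0$, in which case $g\cdot F>0$ for every ruling fiber $F$ and no component is collapsed by $|g|$. Your proposed justification --- that $g\cdot F=0$ because $V_i$ lies in the contracted locus of $\wcX_0\ra\cZ_0$ --- is circular and moreover contradicts the structure theorem you invoke: in Shepherd-Barron's setup the map $\wcX_0\ra\cZ_0$ is \emph{birational}, so no component of $\wcX_0$ is contracted, and there is no a priori relation between the $g$-trivial curves and the band structure, which is determined by the monodromy rather than by the polarization. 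Once this key claim fails, the cocycle condition for gluing $g|_{V_0}$ and $g|_{V_m}$ along $E$ and the degree computation $(g|_{V_i})^2=0$ on the intermediate components both collapse with it.

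The correct route --- and the one the paper takes --- is not to descend $g$ itself but to \emph{modify} it first: one replaces $g$ by $g+\sum_i a_iV_i$, which changes nothing on the generic fiber but alters the restrictions to the components of $\wcX_0$; Friedman's results (\cite[Th.~2.3]{FriedAnnals} in Type II, \cite[Prop.~4.2]{Friedman} in Type III) show that the coefficients $a_i$ can be chosen so that the modified divisor becomes trivial, or a sum of fibers, on the bands, and only then does it descend to $\wcX'_0$. The price of this modification is that the resulting divisor need no longer be nef, which is why the paper must invoke \cite[Th.~1]{ShB} a second time to restore nefness after further birational modifications of the total space. By assuming that the unmodified $g$ already descends, your argument silently skips both of these essential steps.
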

\begin{proof}
In the Type II case, this follows from \cite[Th.~2.3]{FriedAnnals}.
The discussion there shows how (after elementary modification) the
divisor can be chosen to induce the morphism $\wcX_0\ra \wcX_0'$
collapsing $V_1\cup_E \ldots \cup_E V_{m-1}$ to $E$, 
interpreted as the normal crossings locus of $\wcX_0'$.

For Type III, we rely on Proposition 4.2 of \cite{Friedman}, which gives an
analogous process for modifying the coefficients of
$h$ so that it is trivial or a sum of fibers
on the special bands of hexagons.  However, Friedman's result
does not indicate whether the resulting line bundle is nef.
This can be achieved after birational modifications of the
total space \cite[Th.~1]{ShB}.
\end{proof}

We can apply the Friedman-Scattone compactification construction
to both $(\wcX_0,g)$ and $(\wcX'_0,g')$, with
$N=N_{\wcX'_0}$ and $mN=N_{\wcX_0}$.  Thus we obtain {\em two} compactifications
$$\overline{\Gamma \backslash \cD}_{mN} \ra
\overline{\Gamma \backslash \cD}_{N}\supset \Gamma \backslash \cD,$$
both with universal families of degree $2d$ K3 surfaces and admissible degenerations.

To construct $\wcX' \ra \Delta_1=\Spec(R_1)$ we use the diagram
$$\begin{array}{ccc}
\Delta_2 & \ra & \overline{\Gamma \backslash \cD}_{mN} \\
\downarrow &     &  \downarrow  \\
\Delta_1  &     & \overline{\Gamma \backslash \cD}_{N}.
\end{array}
$$
The liftability criterion for mappings to the toriodal compactifications
gives an arrow
$$\Delta_1   \ra    \overline{\Gamma \backslash \cD}_{N}$$
making the diagram commute.  
The induced universal family on this space induces a family
$$\wcX'\ra \Delta_1,$$
agreeing with our original family for $t_1\neq 0$ by the Torelli Theorem.
More precisely, the monodromies of the two families over
$\Delta_1^{\circ}=\Spec(R_1)\setminus \{0\}$
are identified and automorphisms of K3 surfaces act faithfully on
cohomology,
so they are isomorphic over $\Delta_1^{\circ}$.
Thus $\wcX'\ra \Delta_1$ is the desired model.
\end{proof}

\bibliographystyle{alpha}
\bibliography{RPDE}

\begin{thebibliography}{KKMSD73}

\bibitem[A'C75]{AC}
Norbert A'Campo.
\newblock La fonction z\^eta d'une monodromie.
\newblock {\em Comment. Math. Helv.}, 50:233--248, 1975.

\bibitem[Add13]{Adding}
Nicolas Addington.
\newblock The {B}rauer group is not a derived invariant.
\newblock To appear in the proceedings of the AIM workshop {\em Brauer groups
  and obstruction problems: moduli spaces and arithmetic}, 2013.
\newblock arXiv:1306.6538.

\bibitem[ADPZ15]{ADPZ}
Kenneth Ascher, Krishna Dasaratha, Alexander Perry, and Rong Zhou.
\newblock Derived equivalences and rational points of twisted {$K3$} surfaces,
  2015.
\newblock arXiv:1506.01374.

\bibitem[AKW14]{AKW}
Benjamin Antieau, Daniel Krashen, and Matthew Ward.
\newblock Derived categories of torsors for abelian schemes, 2014.
\newblock arXiv:1409.2580.

\bibitem[Art74]{Artin}
Michael Artin.
\newblock Algebraic construction of {B}rieskorn's resolutions.
\newblock {\em J. Algebra}, 29:330--348, 1974.

\bibitem[Art77]{Artin2}
Michael Artin.
\newblock Coverings of the rational double points in characteristic {$p$}.
\newblock In {\em Complex analysis and algebraic geometry}, pages 11--22.
  Iwanami Shoten, Tokyo, 1977.

\bibitem[Ati57]{Atiyah}
Michael~F. Atiyah.
\newblock Vector bundles over an elliptic curve.
\newblock {\em Proc. London Math. Soc. (3)}, 7:414--452, 1957.

\bibitem[BM02]{BrMa}
Tom Bridgeland and Antony Maciocia.
\newblock Fourier-{M}ukai transforms for {$K3$} and elliptic fibrations.
\newblock {\em J. Algebraic Geom.}, 11(4):629--657, 2002.

\bibitem[BM14]{BM2}
Arend Bayer and Emanuele Macr{\`{\i}}.
\newblock M{MP} for moduli of sheaves on {K}3s via wall-crossing: nef and
  movable cones, {L}agrangian fibrations.
\newblock {\em Invent. Math.}, 198(3):505--590, 2014.

\bibitem[BMT14]{BM1}
Arend Bayer, Emanuele Macr{\`{\i}}, and Yukinobu Toda.
\newblock Bridgeland stability conditions on threefolds {I}:
  {B}ogomolov-{G}ieseker type inequalities.
\newblock {\em J. Algebraic Geom.}, 23(1):117--163, 2014.
\newblock arXiv:1203.4613.

\bibitem[Bri98]{BrCrelle}
Tom Bridgeland.
\newblock Fourier-{M}ukai transforms for elliptic surfaces.
\newblock {\em J. Reine Angew. Math.}, 498:115--133, 1998.

\bibitem[Bri07]{Br1}
Tom Bridgeland.
\newblock Stability conditions on triangulated categories.
\newblock {\em Ann. of Math. (2)}, 166(2):317--345, 2007.

\bibitem[BV04]{BV}
Arnaud Beauville and Claire Voisin.
\newblock On the {C}how ring of a {$K3$} surface.
\newblock {\em J. Algebraic Geom.}, 13(3):417--426, 2004.

\bibitem[C{\u{a}}l00]{CalThe}
Andrei~Horia C{\u{a}}ld{\u{a}}raru.
\newblock {\em Derived categories of twisted sheaves on {C}alabi-{Y}au
  manifolds}.
\newblock ProQuest LLC, Ann Arbor, MI, 2000.
\newblock Thesis (Ph.D.)--Cornell University.

\bibitem[Fri83]{Friedman}
Robert Friedman.
\newblock Base change, automorphisms, and stable reduction for type {${\rm
  III}\,K3$} surfaces.
\newblock In {\em The birational geometry of degenerations ({C}ambridge,
  {M}ass., 1981)}, volume~29 of {\em Progr. Math.}, pages 277--298.
  Birkh\"auser Boston, Mass., 1983.

\bibitem[Fri84]{FriedAnnals}
Robert Friedman.
\newblock A new proof of the global {T}orelli theorem for {$K3$} surfaces.
\newblock {\em Ann. of Math. (2)}, 120(2):237--269, 1984.

\bibitem[FS85]{FriSca}
Robert Friedman and Francesco Scattone.
\newblock Type {${\rm III}$} degenerations of {$K3$} surfaces.
\newblock {\em Invent. Math.}, 83(1):1--39, 1985.

\bibitem[HLOY04a]{HLOY3}
Shinobu Hosono, Bong~H. Lian, Keiji Oguiso, and Shing-Tung Yau.
\newblock Autoequivalences of derived category of a {$K3$} surface and
  monodromy transformations.
\newblock {\em J. Algebraic Geom.}, 13(3):513--545, 2004.

\bibitem[HLOY04b]{HLOY}
Shinobu Hosono, Bong~H. Lian, Keiji Oguiso, and Shing-Tung Yau.
\newblock Fourier-{M}ukai number of a {K}3 surface.
\newblock In {\em Algebraic structures and moduli spaces}, volume~38 of {\em
  CRM Proc. Lecture Notes}, pages 177--192. Amer. Math. Soc., Providence, RI,
  2004.

\bibitem[Hon13]{Honigs}
Katrina Honigs.
\newblock Derived equivalent surfaces and abelian varieties and their zeta
  functions.
\newblock To appear in the {\em Proceedings of the American Mathematical
  Society}, 2013.
\newblock arXiv:1310.4721.

\bibitem[HS06]{HuSt}
Daniel Huybrechts and Paolo Stellari.
\newblock Proof of {C}\u ald\u araru's conjecture. {A}ppendix: ``{M}oduli
  spaces of twisted sheaves on a projective variety'' [in {\it {m}oduli spaces
  and arithmetic geometry}, 1--30, {M}ath. {S}oc. {J}apan, {T}okyo, 2006; ] by
  {K}. {Y}oshioka.
\newblock In {\em Moduli spaces and arithmetic geometry}, volume~45 of {\em
  Adv. Stud. Pure Math.}, pages 31--42. Math. Soc. Japan, Tokyo, 2006.

\bibitem[Huy08]{HuyJAG08}
Daniel Huybrechts.
\newblock Derived and abelian equivalence of {$K3$} surfaces.
\newblock {\em J. Algebraic Geom.}, 17(2):375--400, 2008.

\bibitem[Huy10]{HuyEMS}
Daniel Huybrechts.
\newblock Chow groups of {K}3 surfaces and spherical objects.
\newblock {\em J. Eur. Math. Soc. (JEMS)}, 12(6):1533--1551, 2010.

\bibitem[Huy12a]{HuyMSRI}
Daniel Huybrechts.
\newblock Chow groups and derived categories of {K}3 surfaces.
\newblock In {\em Current developments in algebraic geometry}, volume~59 of
  {\em Math. Sci. Res. Inst. Publ.}, pages 177--195. Cambridge Univ. Press,
  Cambridge, 2012.

\bibitem[Huy12b]{HuySur}
Daniel Huybrechts.
\newblock A global {T}orelli theorem for hyperk\"ahler manifolds [after {M}.
  {V}erbitsky].
\newblock {\em Ast\'erisque}, (348):Exp. No. 1040, x, 375--403, 2012.
\newblock S{\'e}minaire Bourbaki: Vol. 2010/2011. Expos{\'e}s 1027--1042.

\bibitem[Huy15]{HuyK3}
Daniel Huybrechts.
\newblock Lectures on {$K3$} surfaces, 2015.

\bibitem[HVA13]{HVA2}
Brendan Hassett and Anthony V{\'a}rilly-Alvarado.
\newblock Failure of the {H}asse principle on general {$K3$} surfaces.
\newblock {\em J. Inst. Math. Jussieu}, 12(4):853--877, 2013.

\bibitem[HVAV11]{HVA1}
Brendan Hassett, Anthony V{\'a}rilly-Alvarado, and Patrick Varilly.
\newblock Transcendental obstructions to weak approximation on general {K}3
  surfaces.
\newblock {\em Adv. Math.}, 228(3):1377--1404, 2011.

\bibitem[KKMSD73]{KKMS}
George Kempf, Finn~Faye Knudsen, D.~Mumford, and Bernard Saint-Donat.
\newblock {\em Toroidal embeddings. {I}}.
\newblock Lecture Notes in Mathematics, Vol. 339. Springer-Verlag, Berlin,
  1973.

\bibitem[Kul77]{Kulikov}
Viktor~S. Kulikov.
\newblock Degenerations of {$K3$} surfaces and {E}nriques surfaces.
\newblock {\em Izv. Akad. Nauk SSSR Ser. Mat.}, 41(5):1008--1042, 1199, 1977.

\bibitem[Kul89]{Kuleshov}
Sergej~A. Kuleshov.
\newblock A theorem on the existence of exceptional bundles on surfaces of type
  {$K3$}.
\newblock {\em Izv. Akad. Nauk SSSR Ser. Mat.}, 53(2):363--378, 1989.

\bibitem[Kul90]{Kuleshov2}
Sergej~A. Kuleshov.
\newblock Exceptional bundles on {$K3$} surfaces.
\newblock In {\em Helices and vector bundles}, volume 148 of {\em London Math.
  Soc. Lecture Note Ser.}, pages 105--114. Cambridge Univ. Press, Cambridge,
  1990.

\bibitem[Lan73]{Landman}
Alan Landman.
\newblock On the {P}icard-{L}efschetz transformation for algebraic manifolds
  acquiring general singularities.
\newblock {\em Trans. Amer. Math. Soc.}, 181:89--126, 1973.

\bibitem[LO11]{LO}
Max Lieblich and Martin Olsson.
\newblock {F}ourier-{M}ukai partners of {$K3$} surfaces in positive
  characteristic.
\newblock To appear in the {\em Annales scientifique de l'\'Ecole normale
  sup\'erieure}, 2011.
\newblock arXiv:1112.5114.

\bibitem[Mar10]{Markman2010}
Eyal Markman.
\newblock Integral constraints on the monodromy group of the hyper{K}\"ahler
  resolution of a symmetric product of a {$K3$} surface.
\newblock {\em Internat. J. Math.}, 21(2):169--223, 2010.

\bibitem[Mat14]{Mat}
Yuya Matsumoto.
\newblock Good reduction criteria for {$K3$} surfaces, 2014.
\newblock arXiv:1401.1261.

\bibitem[ML14]{LieMat}
Yuya Matsumoto and Christian Liedtke.
\newblock Good reduction of {$K3$} surfaces, 2014.
\newblock arXiv:1411.4797.

\bibitem[MM83]{MiMo}
Rick Miranda and David~R. Morrison.
\newblock The minus one theorem.
\newblock In {\em The birational geometry of degenerations ({C}ambridge,
  {M}ass., 1981)}, volume~29 of {\em Progr. Math.}, pages 173--259.
  Birkh\"auser Boston, Boston, MA, 1983.

\bibitem[Mor84]{Morrison}
David~R. Morrison.
\newblock The {C}lemens-{S}chmid exact sequence and applications.
\newblock In {\em Topics in transcendental algebraic geometry ({P}rinceton,
  {N}.{J}., 1981/1982)}, volume 106 of {\em Ann. of Math. Stud.}, pages
  101--119. Princeton Univ. Press, Princeton, NJ, 1984.

\bibitem[MSTVA14]{MSTVA}
Kelly McKinnie, Justin Sawon, Sho Tanimoto, and Anthony V\'arilly-Alvarado.
\newblock Brauer groups on {K}3 surfaces and arithmetic applications, 2014.
\newblock arXiv:1404.5460.

\bibitem[Muk87]{Mukai}
Shigeru Mukai.
\newblock On the moduli space of bundles on {$K3$} surfaces. {I}.
\newblock In {\em Vector bundles on algebraic varieties ({B}ombay, 1984)},
  volume~11 of {\em Tata Inst. Fund. Res. Stud. Math.}, pages 341--413. Tata
  Inst. Fund. Res., Bombay, 1987.

\bibitem[Nic11]{Nicaise}
Johannes Nicaise.
\newblock A trace formula for varieties over a discretely valued field.
\newblock {\em J. Reine Angew. Math.}, 650:193--238, 2011.

\bibitem[Nik79]{Nik79}
Viacheslav~V. Nikulin.
\newblock Integer symmetric bilinear forms and some of their geometric
  applications.
\newblock {\em Izv. Akad. Nauk SSSR Ser. Mat.}, 43(1):111--177, 238, 1979.

\bibitem[Nik08]{Nik08}
Viacheslav~V. Nikulin.
\newblock On connected components of moduli of real polarized {$K3$} surfaces.
\newblock {\em Izv. Ross. Akad. Nauk Ser. Mat.}, 72(1):99--122, 2008.

\bibitem[Ogu02]{Oguiso}
Keiji Oguiso.
\newblock K3 surfaces via almost-primes.
\newblock {\em Math. Res. Lett.}, 9(1):47--63, 2002.

\bibitem[Orl97]{Orlov}
Dmitri~O. Orlov.
\newblock Equivalences of derived categories and {$K3$} surfaces.
\newblock {\em J. Math. Sci. (New York)}, 84(5):1361--1381, 1997.
\newblock Algebraic geometry, 7.

\bibitem[PP81]{PerPin}
Ulf Persson and Henry Pinkham.
\newblock Degeneration of surfaces with trivial canonical bundle.
\newblock {\em Ann. of Math. (2)}, 113(1):45--66, 1981.

\bibitem[P{\v{S}}{\v{S}}71]{PSSh}
I.~I. Pjatecki{\u\i}-{\v{S}}apiro and I.~R. {\v{S}}afarevi{\v{c}}.
\newblock Torelli's theorem for algebraic surfaces of type {${\rm K}3$}.
\newblock {\em Izv. Akad. Nauk SSSR Ser. Mat.}, 35:530--572, 1971.

\bibitem[SB83]{ShB}
Nicholas~I. Shepherd-Barron.
\newblock Extending polarizations on families of {$K3$} surfaces.
\newblock In {\em The birational geometry of degenerations ({C}ambridge,
  {M}ass., 1981)}, volume~29 of {\em Progr. Math.}, pages 135--171.
  Birkh\"auser Boston, Mass., 1983.

\bibitem[SD74]{SD}
B.~Saint-Donat.
\newblock Projective models of {$K3$} surfaces.
\newblock {\em Amer. J. Math.}, 96:602--639, 1974.

\bibitem[Sos10]{Sosna}
Pawel Sosna.
\newblock Derived equivalent conjugate {$K3$} surfaces.
\newblock {\em Bull. Lond. Math. Soc.}, 42(6):1065--1072, 2010.

\bibitem[Ste04]{Stellari04}
Paolo Stellari.
\newblock Some remarks about the {FM}-partners of {$K3$} surfaces with {P}icard
  numbers 1 and 2.
\newblock {\em Geom. Dedicata}, 108:1--13, 2004.

\bibitem[Ver13]{Verb}
Misha Verbitsky.
\newblock Mapping class group and a global {T}orelli theorem for hyperk\"ahler
  manifolds.
\newblock {\em Duke Math. J.}, 162(15):2929--2986, 2013.
\newblock Appendix A by Eyal Markman.

\end{thebibliography}

\end{document}